
\documentclass{svjour3}                     
\smartqed  
\usepackage{graphicx}
\usepackage{subfigure}
\usepackage{xspace}
\usepackage[T1]{fontenc}
\usepackage{comment}
\usepackage{graphicx}
\usepackage[headings]{fullpage}
\usepackage{amsmath, amssymb, amsfonts}
\usepackage{commath}
\usepackage{times}
\usepackage{color}
\usepackage{enumerate}
\usepackage{theorem}
\usepackage{hyperref}

\usepackage{verbatim} 

\theorembodyfont{\upshape}




\newtheorem{algorithm}{Algorithm}

\newcommand{\eref}[1]{\mbox{\rm(\ref{#1})}}

\newcommand{\real}{\mathbb{R}}
\newcommand{\complex}{\mathbb{C}}

\newcommand {\cA} {{\cal A}}

\newcommand {\cC} {{\cal C}}

\newcommand {\cF} {{\cal F}}

\newcommand {\cH} {{\cal H}}
\newcommand {\cI} {{\cal I}}

\newcommand {\cM} {{\cal M}}
\newcommand {\cP} {{\cal P}}

\newcommand {\cT} {{\cal T}}

\newcommand{\bc}{\mathbf{c}}

\newcommand{\be}{\mathbf{e}}
\newcommand{\bbf}{\mathbf{f}}
\newcommand{\bg}{\mathbf{g}}
\newcommand{\bh}{\mathbf{h}}

\newcommand{\br}{\mathbf{r}}

\newcommand{\bu}{\mathbf{u}}

\newcommand{\bP}{\mathbf{P}}
\newcommand{\bQ}{\mathbf{Q}}

\newcommand{\bU}{\mathbf{U}}
\newcommand{\bV}{\mathbf{V}}

\newcommand{\bomega}{{\boldsymbol{\omega}}}

\newcommand{\hf}[2]{{h_{\mu_{#1}}\!\del{#2}}}
\newcommand{\hfc}[2]{{h^*_{\mu_{#1}}\!\del{#2}}}

\newcommand{\hfd}[1]{{{\bf h}_{\mu_{#1}}}}

\newcommand{\ibasis}[1]{{\widetilde{e}_{#1}}}

\newcommand{\bigo}[1]{{\cal O}({#1})}

%
\newcommand{\triplenorm}[1]{\left| \! \right| \!\! | {#1} | \!\! \left| \! \right|}

\newcommand{\nwidth}[2]{d_{#1}}
\newcommand{\greedy}[2]{\sigma_{#1}}

\newcommand{\proF}[2]{\cP_{#1}}
\newcommand{\proFtwo}[2]{\widetilde{\cP}_{#1}}


\DeclareMathOperator*{\argsup}{\arg\!\sup}

\begin{document}


\title{Two-step greedy algorithm for reduced order quadratures}

\author{Harbir Antil, Scott E. Field, Frank Herrmann, Ricardo H. Nochetto, Manuel Tiglio}


\institute{Harbir Antil \at
              Department of Mathematical Sciences. George Mason University, Fairfax, VA 22030, USA. 
              Tel.: +703-993-2390\\
              Fax: +703-993-1491\\
              \email{hantil@gmu.edu}
          \and
           Scott E. Field \at
              Department of Physics, Joint Space Science Institute, Maryland Center for Fundamental Physics. University of Maryland, College Park, MD 20742, USA.
              \and
           Frank Herrmann \at
              Center for Scientific Computation and Mathematical Modeling, Department of Physics, Maryland Center for Fundamental Physics. University of Maryland, College Park, MD 20742, USA.
              \and
              Ricardo H. Nochetto \at 
              Department of Mathematics, and Institute of Physical Science and Technology, University of Maryland,
              College Park, MD 20742, USA.
              \and 
              Manuel Tiglio \at 
              Center for Scientific Computation and Mathematical Modeling, Department of Physics, Joint Space Science Institute, Maryland Center for Fundamental Physics. University of Maryland, College Park, MD 20742, USA. \\
}

\maketitle 

\begin{abstract}

We present an algorithm to generate application-specific, global 
{\em reduced order quadratures} (ROQ) for multiple fast evaluations of weighted inner products between parameterized functions. 
If a reduced basis (RB) or any other projection-based model reduction technique is applied,
the dimensionality of integrands is reduced dramatically; however, the cost of approximating the integrands by projection still scales as the size of the original problem. In contrast, using discrete empirical interpolation (DEIM) points as ROQ nodes leads to a computational cost which  depends linearly on the dimension of the reduced space. 
Generation of a reduced basis via a greedy procedure requires a training set, which for products of functions can be
very large. Since this direct approach can be impractical in many applications, we propose instead a two-step greedy targeted towards approximation of such products. We present numerical experiments demonstrating the accuracy and the efficiency of the two-step approach. The presented ROQ are expected to display very fast convergence whenever there is regularity with respect to parameter variation. We find that for the particular application here considered, one driven by gravitational wave physics, the two-step approach speeds up the offline computations to build the ROQ by more than two orders of magnitude. Furthermore, the resulting ROQ rule is found to converge  exponentially with the number of nodes, and a factor of $\sim 50$ savings, without loss of accuracy, is observed in evaluations of inner products when ROQ are used as a downsampling strategy for equidistant samples using the trapezoidal rule. While the primary focus of this paper is on quadrature rules for inner products of parameterized functions, our method can be easily adapted to integrations of single parameterized functions, and some examples of this type are considered.

\end{abstract}





\section{Introduction}  \label{sec:intro}

Many application areas deal with parameterized problems. Throughout this paper, we consider these to be a set of functions 
\[
   \cF_W:= \{ h_{\mu} : \Omega \rightarrow \complex \ | \ \mu \in \cP, h_\mu \in \cC\del{\Omega} \},
\] 
where $\Omega$, $\cP$ denote the physical and parameter domains,
respectively, $\complex$ is the set of complex numbers, $\cC\del{\Omega}$ is the set of continuous functions 
on a compact domain $\Omega \subset \real$ and $\cF_W \subset \cH_W$
denotes a compact subset of the Hilbert space $\cH_W: = L^2_{W}(\Omega)$. In general,
both $\Omega$ and $\cP $ are multi-dimensional and can be irregular 
domains. Here we assume $\cP$ to be  
compact in $\real^N$ and take the scalar product and norm between two functions $f,g \in \cH_W$ to be
\begin{equation}
\langle f, g \rangle_{L^2_W} := \int_{\Omega}  f^*(x) g(x) W(x) dx , \qquad 
            \| f \|_{L^2_W}^2:= \langle f, f \rangle_{L^2_W}  ,
            \label{eq:dp}
\end{equation}
with $0 < W \in \cC\del{\Omega}$ some weight function and $f^*$ denoting complex conjugation. We absorb the weight above into each integrand function through\footnote{For generic target applications we envision these functions not to be polynomials, and the weights not to be standard ones. In particular, our proposed reduced order quadrature construction does not rely on classical orthogonal polynomial theory for which the precise form of the weight $W$ is essential.}
$$
f(x) \rightarrow W^{\frac{1}{2}}(x) f(x)\, , 
$$
with $x \in \Omega$,  and define the set of functions
\begin{align*}
\cF = \{ h_{\mu} \sqrt{W} \in \cC\del{\Omega} \ | \ h_{\mu} \in \cF_W \}\, , 
\end{align*}
which is now a compact subset of the Hilbert space $\cH:=  L^2(\Omega)$. This allows us, without loss of generality, 
to use the standard $L^2$ inner product and the corresponding norm for elements of $\cH$, i.e., 
\begin{equation} \label{eq:dp_full}
    \langle f,g \rangle = \int_\Omega f^*(x) g(x) \, dx, \qquad \| f \|^2:= \langle f, f \rangle 
 \, .
\end{equation} 
Furthermore,  the discrete $L^2$ inner product and the corresponding norm are here denoted by
\begin{align}  \label{eq:dp_disc}
    \langle f, g \rangle_{\tt d} = \sum_{k=1}^M \omega_k f^*(x_k) g(x_k), \qquad \| f \|_{\tt d } ^2
                                                        =  \langle f, f \rangle_{\tt d} \, ,
\end{align}
where $\{  x_k, \omega_k \}_{k=1}^M $ are arbitrary quadrature points and weights.
Additionally, we often denote discrete objects and vectors
through bold notation, for example, $\bbf = \del{ f(x_1), \dots, f(x_M) }^T$ and $\bomega = \del{ \omega_1 , \dots, \omega_M }^T$. 
More details about the notation used throughout this paper are given in Table~\ref{table:notation}.

There are several challenges associated with parameterized problems and Reduced Order Modeling (ROM). One of them, when the set of functions $\cF$ is not known a priori but requires expensive numerical simulations, is how to select ``on the fly'' which parameters to solve for in a nearly optimal way, and a compact --application-specific spectral-- representation for every element in $\cF$. Reduced Basis (RB) \cite{PateraBook} is a leading candidate for such problems. We refer to Section \ref{sec:RB} for details about it; for the purposes of this paper it suffices to describe in that section the algorithm and some of its key features.
Another often desired aspect of any approach to parameterized problems is, once a reduced basis has been built, the ability to accelerate the computation of a particular quantity of interest. This can be fast online prediction of new solutions by solving a reduced problem~\cite{PateraBook} or, as in the case of this paper, fast online evaluations of inner products between elements of $\cF$.
Additional benefits may be gained from an interpolation-based approximation. The leading candidates for this are the Empirical Interpolation Method (EIM) \cite{Maday_2009,Barrault2004667} and its discrete counterpart, the Discrete Empirical Interpolation Method (DEIM)~\cite{chaturantabut:2737,Chaturantabut5400045}. Again, for the purposes of this paper it suffices to summarize the algorithm in Section \ref{sec:EIM} and some of its properties.

The specific goal of this paper is to build application-specific global quadratures for {\bf inner products of functions} (such as those appearing in convolutions, matched filtering,  Markov Chain Monte Carlo simulations, etc.) combining both ROM  and the DEIM, which for briefness we refer to as {\it reduced order quadratures} (ROQ). Here we explicitly consider RB for ROM, but the approach remains the same for other choices of bases generated using, for example, Proper Orthogonal or Singular Value Decompositions (POD, SVD)~\cite{MHinze_SVolkwein_2005a}.

The construction of ROQ is somewhat similar in spirit to that one of Gaussian quadratures: the function to be integrated is approximated by a truncated expansion in a reduced basis,
this expansion is replaced by interpolation (here DEIM) at special nodal points and, finally, the interpolant integrated to compute effective quadrature weights for any scalar product of functions in the set of interest $\cF$.  For many applications of interest, there are several advantages of ROQ with respect to Gaussian quadratures, though. First, regularity with respect to parameter variation, not with respect to the integration variable, is exploited to provide very fast (in many cases of interest, exponentially fast) convergence. 
For example, for the gravitational wave test cases considered in this paper, ROQ prove to be more efficient than Gaussian quadratures (see Fig.~\ref{fig:RBO_Quad}). In addition, the method is adaptive by nature, and the quadrature nodes are hierarchical. Our approach also gives, when acquiring data from experiments or observations, a nearly optimal way of downsampling it for matched filtering~\cite{manolakis2000statistical,Wainstein_L:1962} or other purposes, while at the same time preserving the accuracy in computing integrals with the full data set.  All these benefits of ROQ come at the cost of 
a potentially expensive offline procedure to pre-assemble both the basis and the ROQ rule. For many applications where multiple fast evaluations are required, especially if they are needed online or in real time, this tradeoff is indeed desirable. 

To the best of our knowledge, leveraging the advantages of the Empirical
Interpolation Method for fast numerical integration was first suggested by
Maday et al~\cite{Maday_2009} (see also
Refs.~\cite{Eftang:2011,Aanonsen2009}) and further investigations were
carried out by Aanonsen~\cite{Aanonsen2009}. However, our approach
differs from previous ones in several respects. First, the empirical
interpolant coefficients are usually found by carrying out a potentially
costly (compared to a competitive quadrature rule) matrix-vector
multiplication. Here we absorb this cost into an {\em
  offline} computation of the reduced order quadrature weights and
explicitly write the ROQ rule as a vector-vector
product (see Eq.~\eqref{eqn:RO_quad} in Section~ \ref{sec:overview}). Additionally, the RB-DEIM model
reduction used here allows for a natural factorization of the
parameter and physical dependences. 
Finally and most important, our main
objective here is fast computation of {\bf inner products} of the form 
$\langle h_{\mu_i},h_{\mu_j}\rangle$. While this is closely related to the integration of single functions in $\cF$, for large problems there are practical and serious obstacles towards a reduced basis representation of products of functions,  
denoted here as 
\begin{align}  \label{eq:product_set}
     \widetilde{\cF}  = \{ h_{\mu_i}^* h_{\mu_j} \in \cC\del{\Omega} \ | \ h_{\mu_i}, h_{\mu_j} \in \cF \}\, , 
\end{align}
in terms of 
very large training spaces. To address this we suggest a simple two-step approach targeted towards such products which reuses the algorithms necessary for reducing the underlying lower order space $\cF$, with dramatic savings in the offline stage. Without this two-step procedure, 
building ROQ would be simply not possible in many realistic application problems driving our work, even if carried out offline and using large supercomputers. 

This paper is organized as follows. Section~\ref{sec:overview} provides an overview of the main ideas to be developed 
 as well as brief preliminary discussions on quadrature rules, RB-DEIM, and reduced order quadratures. 
Section~\ref{sec:RB_DEIM} collects necessary results and algorithms required for developing ROQ. In particular, in Section~\ref{sec:RB}  we describe a RB-greedy model reduction algorithm to generate a set of basis vectors which are subsequently interpolated at a set of points chosen by 
the DEIM algorithm as summarized in Section~\ref{sec:EIM}. The main difficulties in constructing a reduced basis for $\widetilde{\cF}$, as well as our approach for overcoming them, are considered in Section~\ref{sec:Products}. Fast integrations using the reduced model and ROQ are constructed in Section~\ref{sec:ROQ_Alg}, along with some error estimates in Section~\ref{sec:convergence_rates}. Straightforward extensions of the ROQ algorithm for resampled functions are discussed in Section~\ref{sec:ROQ_newPoints}. Two numerical experiments are documented in Section~\ref{sec:numerics}. To compare with well known results in one and two dimensions, our first experiment considers polynomials on the standard interval $[-1,1]$ as basis, and integration of single functions. The second example draws from a non-trivial problem in gravitational wave physics and showcases the potential savings of ROQ to compute scalar products in large problems. In Section~\ref{sec:conclusion} we summarize the results of this paper, and comment on their possible extensions.

\section{Overview and main ideas}  \label{sec:overview}

The main goal of this paper is to efficiently compute approximations to integrals such as those in Eq.~\eqref{eq:dp_full}. We refer to the exact, continuum one as $I_{c}$, 
\begin{equation}   \label{eq:Ic} 
 I_c (i,j) :=  \langle h_{\mu_i}, h_{\mu_j} \rangle \, ,
\end{equation}
where $\{\mu_i, \mu_j \}$ are any two values in the parameter space $\cP$. 
Next we discuss different approximations to 
$I_c$. 

\smallskip
\noindent
{\bf Integration by quadrature:} 
Let us recall a typical quadrature rule, since the proposed ROQ of this paper follow a somewhat similar pattern. 
With $\left\{ x_k, \omega_k \right\}_{k=1}^M$ denoting an arbitrary set of quadrature points and weights, respectively, we can approximate the integral \eqref{eq:Ic} as 
\begin{align}  \label{eq:disc_full_L2prod}
    I_{c}(i,j) \approx I_{\tt d}(i,j) := \langle h_{\mu_i}, h_{\mu_j} \rangle_{\tt d}
        = \sum_{k = 1}^M \omega_k  \hfc{i}{x_k} \ \hf{j}{x_k} \, .
\end{align}
Many integration rules can be written in the form of Eq.~\eqref{eq:disc_full_L2prod}, including the extended trapezoidal rule, Gaussian quadratures, and integration using domain decomposition. Among these many choices an optimal strategy should allow for a given target integration accuracy with the smallest number of operations. 
However, if $M$ is large (perhaps the functions are sharply peaked, have different length scales and/or many cycles) and/or many evaluations of $I_{c}$ are needed (perhaps in real-time), computing the approximations $I_{\tt d}$ can become very costly. Problems which depend on a high dimensional spaces constitute such cases. In addition, if the target functions $\{ h_{\mu_j} \}$ come from acquired data, each function $h_{\mu_j}$ is usually sampled at equally spaced or scattered points and the integrations are usually carried out using an extended trapezoidal rule, with slow convergence in general. 

\smallskip
\noindent
{\bf Integration in a reduced space:} 
A preliminary reduced order approximation to $I_{\tt d}(i,j)$ 
would involve using an expansion in basis vectors $\{ e_\ell  \}_{\ell=1}^n$.
Without loss of generality we can assume that the basis is orthonormal with respect to the discrete \footnote{In carrying out the RB-greedy algorithm we resolve all the integrals involved within machine precision, so for all practical purposes within that context the discrete and continuum scalar products agree with each other.} inner product 
$\langle \cdot, \cdot \rangle_{\tt d}$. Then any function $h_{\mu} \in \cF$ can be approximated by 
\begin{align*}
       h_\mu \approx \proF{n}{} h_{\mu} := \sum_{\ell=1}^n \langle e_\ell , h_{\mu} \rangle_{\tt d}  e_\ell  \, ,
\end{align*}
where we have introduced the orthogonal projection operator $\proF{n}{}$, and an approximation to \eqref{eq:disc_full_L2prod} would be 
\begin{equation}
    I_{\tt d}(i,j)  \approx  \langle \proF{n}{} h_{\mu_i}, \proF{n}{} h_{\mu_j}\rangle_{\tt d}
  = \sum_{k=1}^n \langle  h_{\mu_i}, e_k\rangle_{\tt d} \langle e_k, h_{\mu_j}\rangle_{\tt d } \, . \label{eq:Irb}
\end{equation} 
Now, computing the approximation \eqref{eq:Irb} requires knowledge of the projection coefficients $\left\{ \langle  h_{\mu_i}  , e_k\rangle_{\tt d}  \right\}$. Modulo the latter, performing an integration via Eq.~\eqref{eq:Irb} is of improved computational cost ($n$ multiplications and $n-1$ additions) whenever the number of basis elements is smaller than the number of quadrature points, $n < M $. A greedy construction of a {\it reduced basis} (see Sec.~\ref{sec:RB}), as opposed to a standard basis choice such as Jacobi polynomials, involves the use of a problem-dependent {\it training space}. If the functions to be integrated, namely $h_{\mu_i}$ and $h_{\mu_j}$, are members of this training space, the projection coefficients 
have been precomputed offline while building the basis (see Section \ref{sec:RB}). The more interesting case in practice is that one in which $h_{\mu_i}$ and $h_{\mu_j}$ were not members of the training space, or one is unable to store the set of projection coefficients. 

\smallskip
\noindent
{\bf Integration with reduced order quadratures (ROQ):} 
We advocate an application-specific quadrature scheme for efficiently evaluating the discrete integral in \eqref{eq:disc_full_L2prod}, with essentially no loss of accuracy,  with a computational cost that depends only on the reduced basis space dimension, and without requiring projection coefficients. 
This new approach combines the flexibility of quadrature rules with powerful dimensionality reduction. 
We seek to approximate integrands of the form $g_{ij} = h_{\mu_i}^* h_{\mu_j}$,
\begin{align} \label{eq:RB_approxProducts}
       g_{ij} \approx \proFtwo{m}{} g_{ij} := \sum_{\ell=1}^m \langle \ibasis{\ell} ,g_{ij} \rangle_{\tt d}  \ibasis{\ell}  \, ,
\end{align}
with $g_{ij} \in \widetilde{\cF}$, and a truncated expansion is carried with a set of basis vectors 
$\{ \widetilde{e}_\ell  \}_{\ell=1}^m$ approximating elements in $\widetilde{\cF}$. As before, we are faced with computing 
$\langle \ibasis{\ell} ,g_{ij} \rangle_{\tt d}$, the cost of which can in principle be expensive. The empirical interpolation method (EIM) offers an attractive alternative: consider these $m$ basis 
$\{ \widetilde{e}_\ell  \}_{\ell=1}^m$, 
let $\{ \widetilde{p}_\ell \}_{\ell=1}^m \subset \{ x_k \}_{k=1}^M$ be a set of points (the generation of which will be described later), and let the EIM interpolant of $g_{ij}$ be
\begin{align*} 
  \widetilde{{\cal I}}_m[g_{ij}] := \sum_{\ell=1}^m \widetilde{c}_\ell \ibasis{\ell}\ \quad \mathrm{s.t.} \quad  \sum_{\ell=1}^m \widetilde{c}_\ell \ibasis{\ell}(\widetilde{p}_k) = g_{ij} (\widetilde{p}_k), \quad k = 1, \dots, m \, ,
\end{align*}
where the coefficients $\{ \widetilde{c}_\ell \}_{\ell=1}^m$
solve the interpolation problem. 
If the interpolant is accurate then $g_{ij} \approx {\widetilde{\cal I}_m}[ g_{ij} ]$
and, substituting this into Eq.\eqref{eq:disc_full_L2prod}, our approximation to the quadrature rule $\{x_k, \omega_k\}_{k=1}^M$ becomes
\begin{align} 
      I_{d}(i,j) \approx \sum_{k=1}^M \omega_k \widetilde{\cal I}_m[g_{ij}](x_k)  
 & = \sum_{k = 1}^M \omega_k \sum_{\ell=1}^m  \widetilde{c}_\ell \ibasis{\ell}(x_k) 
    = \sum_{\ell=1}^m \left[ \sum_{k = 1}^M \omega_k \ibasis{\ell}(x_k) \right] \widetilde{c}_\ell \, . \label{eq:roq}
\end{align}
Written in this way, the bracketed expression in the last term of Eq.~(\ref{eq:roq}) is seen to be the integration of the basis. 
Later on (cf.~Eq.(\ref{eq:ROQ})) we show that this expression can be rewritten in a more recognizable form
\begin{align} \label{eqn:RO_quad}
   I_{\tt d}(i,j)  \approx  I_{\mathrm{ROQ}}(i,j):= \sum_{\ell=1}^m \omega_\ell^\mathrm{ROQ} h_{\mu_i}^* (\widetilde{p}_\ell) h_{\mu_j}(\widetilde{p}_\ell) =  \sum_{\ell=1}^m \omega_\ell^\mathrm{ROQ} g_{ij}(\widetilde{p}_\ell)  \, .   
\end{align}
Eq.~\eqref{eqn:RO_quad} is our proposed Reduced Order Quadrature, which has an online evaluation cost of 
$\bigo{m}$. 
The interpolation points $\{\widetilde{p}_\ell\}_{\ell=1}^m \subset \{x_k\}_{k=1}^M$ are outputs of the EIM algorithm. Both these points and the weights $\omega_\ell^\mathrm{ROQ}$ only depend on the underlying quadrature rule given by $\{x_k, \omega_k\}_{k=1}^M$ and on the choice of reduced basis vectors $\ibasis{\ell}$; they are precomputed off-line and do not depend on the integrand $g_{ij}$. Note that the ROQ rule \eqref{eqn:RO_quad} achieves our efficiency requirements whenever $m < M$. In many cases one can expect exponential convergence of $I_\mathrm{ROQ} \rightarrow I_{\tt d}$ with respect to $m$, and for $I_{\tt d} \approx I_c$  we expect this convergence rate to be inherited by the limit to the continuum $I_\mathrm{ROQ} \rightarrow I_c$. Furthermore, even though this might be problem-dependent, in our numerical experiments (see Section~\ref{sec:SPAWaveforms}) we have found that $m \sim 2n \ll M$ (the equality $m=2n$ exactly holds for polynomial bases). For the case $m=M$, when all degrees of freedom have been exhausted, consistency requires both quadrature rules $\{\widetilde{p}_\ell, \omega_\ell^\mathrm{ROQ}\}_{\ell=1}^m$ and $\{x_k, \omega_k\}_{k=1}^M$ to be identical. Indeed, in this case the quadratures points trivially coincide and one can show ${\bf \omega^\mathrm{ROQ}} = {\bf \omega}$ (see Corollary~\ref{cor:consistency}).

A straightforward and largely self contained blueprint to generate the quadrature rule 
$\{\widetilde{p}_\ell, \omega_\ell^\mathrm{ROQ}\}_{\ell=1}^m$ is given in Algorithm~\ref{algo:ROQ}.

\smallskip
\noindent
{\bf Approximation of products with a two-step greedy:} The ROQ rule requires an approximation of integrands, which is where the main challenge lies; namely, in integrating products of target functions and not individual ones. A greedy construction of a reduced basis space approximating $\cF$ involves the use of a {\em training set} $\cT_K := \{ \mu_i\}_{i=1}^K \subset \cP$ of size $K$ which densely samples the continuum $\cP$. If a faithful approximation of $\cF$ requires a training set $\cT_K$ of size
$\bigo{K}$ then a training set for $\widetilde{\cF}$ would in principle be of size $\bigo{K^2}$. Since for large problems $\bigo{K}$ can already be computationally challenging, this direct approach might be unfeasible in some applications, even when building the basis is an offline and parallelizable calculation.

A greedy algorithm for $\cF$ returns a set of $n$ {\em greedy points} $\{\mu_\ell\}_{\ell=1}^n$ and orthonormal reduced basis $\{e_\ell\}_{\ell=1}^n$. The basic idea of the two-step greedy approach is to take advantage of the observation that all integrands can be accurately approximated by 
$$
h_{\mu_{i}}^* h_{\mu_{j}} \approx \left(\proF{n}{} h_{\mu_{i}}\right)^* \proF{n}{} h_{\mu_{j}} = \left(\sum_{k=1}^n \langle e_k, h_{\mu_{i}} \rangle_{\tt d} e_k\right)^*\left(\sum_{l=1}^n \langle e_l, h_{\mu_{j}} \rangle_{\tt d} e_l\right) \, ,
$$
which involves a sum of $n^2$ terms 
of the form $e_k^*e_l $. We then carry out a second layer of dimensional reduction -- a second greedy for the products $\{ e_k^*e_l \}_{k,l=1}^n$.
The training set for this second greedy is of size $\bigo{n^2}$, as opposed to $\bigo{K^2}$ 
(with, usually, $n^2 \ll K^2$). One may also consider a training set for $\widetilde{\cF}$ given by the Cartesian product of greedy points $\{\mu_i \}_{i=1}^n$ thereby sampling $\widetilde{\cF}$ is a smarter way. Both ideas are more thoroughly explored in Sec.~\ref{sec:Products}.

\begin{table}
\centering
\begin{tabular}{||rcl||}
\hline
$\cF$: & & set of target functions.\\
$\mu$: & & parameter in set $\cP$. \\
$h_{\mu}$: & & a sample target function in $\cF$.\\
$\widetilde{\cF}$: & & set of product of target functions.\\
$g_{\mu}$: & & a sample target function in $\widetilde{\cF}$.\\
$\widetilde{\mu}$: & & parameter in Cartesian product set $\cP \times \cP$\\
$\{e_\ell\}_{\ell=1}^n$: & & basis whose span $F_n$ approximates $\cF$.\\
$\proF{n}{}$: & & orthogonal projection to $F_n$. \\
${\cal I}_n[]$: & & empirical interpolation associated with $F_n$. \\
$\set{ \ibasis{\ell} }_{\ell=1}^m$: & & basis whose span $\widetilde{F}_m$ approximates $\widetilde{\cF}$.\\
$\proFtwo{m}{}$: & & orthogonal projection to $\widetilde{F}_m$. \\
$\widetilde{{\cal I}}_m[]$: & & empirical interpolation associated with $\widetilde{F}_m$. \\
$M$: & & number of samples to evaluate discrete inner products.\\
$n$: & & number of EIM points (and reduced basis) to approximate elements in $\cF$.\\
$m$: & & number of EIM points (and reduced basis) to approximate elements in $\tilde{\cF}$. Number of ROQ points.\\
$\cF_{\mathrm{train}}$: & & training space sampling $\cF$. \\
$K$: & & number of elements in $\cF_{\mathrm{train}}$.\\
$K^2$: & & number of training space elements in principle needed to {\em directly} sample $\widetilde{\cF}$.\\
$n^2$: & & number of training space elements used in our two-step greedy approach to sample $\widetilde{\cF}$. In general $n^2 \ll K^2$.\\
\hline
\end{tabular}
\caption{{\sc Notation.}
Notice that tildes refer to approximations or quantities related to $\widetilde{\cF}$, defined in Eq.~\eqref{eq:product_set}.
\label{table:notation}}
\end{table}

\section{Reduced basis and the empirical interpolation method} \label{sec:RB_DEIM}

The ROQ rule proposed here requires an
accurate interpolation representation for all integrands in the
space of interest. In the next two subsections we describe
the necessary RB-greedy and EIM algorithms to achieve this. Here we favor an RB-greedy
approach, since it is able to handle very large problems, such as
those that we are interested in. Our proposal is also applicable to other
basis choices such as those found through a Proper Orthogonal/Singular Value
Decomposition~\cite{MHinze_SVolkwein_2005a}. In Sec.~\ref{sec:Products} we propose  
a two-step RB-greedy approach for large sets of products of functions. Without loss of generality we assume  all 
functions to be normalized with respect to the discrete inner product $\langle \cdot, \cdot \rangle_{\tt d}$. 

\subsection{\bf Greedy construction of a reduced basis} \label{sec:RB}

The RB approach, combined with a greedy algorithm to generate the basis, provides a way to construct an application-specific 
expansion, where the basis elements are members of the space under consideration itself. Here the sets of interest are $\cF$ and $\widetilde{\cF}$. For definiteness in notation we will describe the approach for $\cF$, since an approach for $\widetilde{\cF}$ or any other space is identical; we will refer to this approach applied to $\widetilde{\cF}$ as a {\em direct}
or one-step RB-greedy.

A greedy approach is highly efficient in practice. It yields a nested basis set that is hierarchically constructed.
As is customary for spectral
expansions, having an orthonormal basis $\{e_\ell\}_{\ell=1}^n$ simplifies computing 
the orthogonal projection $\proF{n}{} h_{\mu }$ with respect to $\langle \cdot, \cdot \rangle_{\tt d}$ of a function
$h_\mu \in \cF$ onto the span $F_n$ of the basis, 
\begin{align*}
   F_n := \mathrm{span}\{ e_1,\dots,e_n\} \, .
\end{align*}
We recall (see, for example, \cite{Pinkus}) that the Kolmogorov $n$-width of $\cF$ in $\cH$ 
\begin{align} \label{eq:KNwidth}
     \nwidth{n}{M} (\cF;\cH) &:= \inf_{ \dim X_n \le n} \hspace{5pt} \sup_{h_{\mu} \in \cF } \hspace{5pt} \inf_{f \in X_n}
                                   \left\|   h_{\mu} - f  \right\|  
                               =  \inf_{ \dim X_n \le n} \hspace{5pt} \sup_{h_{\mu} \in \cF } \left\|   h_{\mu} - \cP_n h_{\mu}  \right\| \, ,
\end{align}                                   
measures the error of the best n-dimensional subspace $X_n \subset \cH$ approximating $\cF$, and the last equality follows from $\cH$ being a Hilbert space.
Computation of the $n$-width $\nwidth{n}{M}$, or any basis achieving it, is in most practical applications not possible (however, see \cite{Pinkus} for some cases where it can be done). 
Nevertheless, obtaining a convergence rate for the $n$-width provides valuable information towards understanding the approximability of a space by 
greedy algorithms.

We summarize a greedy strategy to build a reduced basis $\{ e_\ell\}_{\ell=1}^n$, with an approximation error
\begin{equation} \label{eq:RB-error}
    \greedy{n}{M} (\cF;\cH) := \sup_{h_{\mu} \in \cF }  \left\|   h_{\mu} -  \proF{n}{} h_{\mu }  \right\| \, ,
\end{equation}
which is nearly optimal with respect to the Kolmogorov $n$-width defined in Eq.~\eqref{eq:KNwidth}. 
Then we will state some of the available convergence rates for the {\it greedy error} $\greedy{n}{M}(\cF;\cH)$ based on \cite{Binev10convergencerates,DeVore2012}, which make precise the quoted near 
optimality, under the assumption that up to machine precision $\left\|   h_{\mu} -  \proF{n}{} h_{\mu }  \right\| \simeq \left\|   h_{\mu} -  \proF{n}{} h_{\mu }  \right\|_{\tt d}$.
While the greedy error \eqref{eq:RB-error} has been defined over $\cF$, in practice one 
samples the continuum using a training set $\cT_K := \{ \mu_i\}_{i=1}^K \subset \cP$ 
of size $K$ and a {\em training space} $\cF_{\mathrm{train}}$ of associated normalized functions $\{ h_{\mu_i} \}_{i=1}^K$.
Typically, if there is redundancy, the number of 
reduced basis needed to represent the training space is much smaller than the number of samples: $n \ll K$. 

Next let $\epsilon > 0$ be a user-specified error tolerance. Its role is to guarantee that the approximation error ensured by the RB-Greedy algorithm is strictly bounded as
\begin{equation}
    \greedy{n}{M} (\cF_{\mathrm{train}};\cH) := \sup_{h_{\mu} \in \cF_{\mathrm{train}} } \norm{h_\mu - \proF{n}{} h_\mu}_{\tt d} \le \epsilon \, . \label{eq:greedy_error1}
\end{equation}
To a-priori ensure that the training space $\cF_{\mathrm{train}}$ is a faithful approximation of the continuum $\cF$ is, in general, difficult. Construction and adaptive management of the training space $\cF_{\mathrm{train}}$ is an area of active research; see, for example, \cite{Eftang:2010,Eftang:2011,Bui:2007,Haasdonk:2010}. A good choice is dictated by the problem, further details for our application are given in the numerical results Section \ref{sec:SPAWaveforms}. To check that the reduced basis space $F_n$ is a faithful approximation of $\cF$ we typically do convergence tests with respect to the number of samples $K$ in $\cT_K$, and Monte Carlo reconstruction studies of the continuum; see for example \cite{Caudill:2011kv,Field:2011mf,PhysRevD.86.084046} and Section~\ref{sec:SPAWaveforms}. For any given tolerance $\epsilon >0$, the Algorithm~\ref{algo:RB-greedy} of Appendix~\ref{app:algs} ensures the strict bound \eqref{eq:greedy_error1} over the entire training space. However, a-posteriori validation has allowed us to establish a more impressive bound in all of our applications in gravitational wave physics so far \cite{Caudill:2011kv,Field:2011mf,PhysRevD.86.084046}, 
\begin{align}\label{sigma-epsilon}
\|h_\mu - \proF{n}{} h_\mu\|_{\tt d} \le ~ \greedy{n}{M} (\cF;\cH) \lesssim \epsilon  \quad \forall \mu \in \cP \, .
\end{align}
Such observation is obviously problem-dependent, and in general it should read 
$\greedy{n}{M} (\cF;\cH) \le \widetilde{\epsilon}$ 
where in principle $\tilde{\epsilon} \geq \epsilon$.

For any $\epsilon > 0$ and training set $\cT_K$ the greedy algorithm to build a reduced basis is given in App.~\ref{app:algs}. This algorithm returns a nested, hierarchical set of $n$ greedy points $\{\mu_\ell\}_{\ell=1}^n$ and orthonormal reduced basis $\{e_\ell\}_{\ell=1}^n$ which are nearly optimal with respect to the $n$-width~\eqref{eq:KNwidth}, in the following sense. Recall from \cite[Corollary 3.3]{DeVore2012} that if the $n$-width defined in \eqref{eq:KNwidth} decays exponentially, then
\begin{align}  \label{eq:kolmogorov-width-exp}
    \nwidth{n}{M} (\cF;\cH) \le C e^{-c_0 n^\alpha} \quad \rightarrow \quad \greedy{n}{M} (\cF;\cH) &\le \sqrt{2C} e^{-c_1 n^{\alpha}} \, , 
\end{align}
whence the greedy error in \eqref{eq:RB-error} decays exponentially, where $C$, $c_0$, $\alpha$, and $c_1 := 2^{-1-2\alpha} c_0$ are positive constants.
Similar statements can be made if the Kolmogorov $n$-width decays with a polynomial order,
\begin{align}  \label{eq:kolmogorov-width-alg}
    \nwidth{n}{M} (\cF;\cH) \le C_a n^{-\beta} \quad \rightarrow \quad \, \greedy{n}{M} (\cF;\cH) &\le 2^{5\beta+1} C_a n^{-\beta} \, , 
\end{align}
where $C_a$, $\beta$ are positive constants.

In Section~\ref{sec:SPAWaveforms} we summarize the observed exponential decay of the greedy error for a particular family of gravitational wave solutions.
We have also found that for a fixed representation error and parameter domain $\cP$ we can reconstruct any member function in $\cF$ with a {\em finite number of basis} elements. That is, as the size of $\cT_K $ increases, a finite asymptotic number of basis is needed to represent it; 
see \cite{Cannon:2010qh,Cannon:2011xk,Field:2011mf,Caudill:2011kv,PhysRevD.86.084046} for further studies.

\begin{remark}
  While it is obvious that $\nwidth{n}{M} (\cF;\cH) \le \greedy{n}{M} (\cF;\cH)$, a direct comparison obtained in \cite[Corollary 3.3]{DeVore2012} reads $\sigma_n \del{\cF;\cH} \le \sqrt{2 \ d_{n/2}\del{\cF;\cH} }$ for any $n$-width decay rate. If $d_{n/2}\del{\cF;\cH} $ decays exponentially, then one gets \eqref{eq:kolmogorov-width-exp}. Similarly, if $d_{n/2}\del{\cF;\cH}$ decays with a polynomial order, then one has \eqref{eq:kolmogorov-width-alg}.
\end{remark}

We now turn to some immediate applications of the reduced space
expressed with matrix-vector notation, which will prove convenient
throughout the remainder of this paper. Let 
\[
                     \bV := [ \be_1, \dots, \be_n ] \in \complex^{M \times n} \, ,
                \]
where the $\ell^\mathrm{th}$ column of the matrix $\bV$ is $\be_\ell = \del{ e_\ell(x_1),\dots,e_\ell(x_M) }^T \in \complex^M$; 
i.e., it corresponds to the  $\ell^\mathrm{th}$ reduced basis $e_\ell$ sampled at the
set of quadrature points $\{x_i\}_{i=1}^M$ used in the greedy algorithm\footnote{For certain applications it may be desirable to evaluate the functions at a different set of points $\{y_i\}_{i=1}^{M'}$,
see Section~\ref{sec:ROQ_newPoints} for details.}. 
In matrix-vector notation $\proF{n}{} {\bf h_{\mu }} = \bV \left[\bV^{\dagger} (\bomega \circ \hfd{}) \right] $ and the reduced discrete integral \eqref{eq:Irb} becomes 
\begin{align} \label{eqn:RB_discrete}
  I_{\tt d}(i,j) \approx  \langle \proF{n}{} h_{\mu_i}, \proF{n}{} h_{\mu_j} \rangle_{\tt d} 
                                                  & = \sum_{k=1}^n \langle h_{\mu_i}, e_k \rangle_{\tt d} \langle e_k, h_{\mu_j} \rangle_{\tt d} 
                =    \left(  \bV^{\dagger} (\bomega \circ \hfd{i}) \right)^{\dagger}
                                 \left(  \bV^{\dagger} (\bomega \circ \hfd{j}) \right)   \, ,
\end{align}
where $\bV^{\dagger}$ denotes the conjugate transpose of $\bV$ and $\circ$ denotes the Hadamard (pointwise) product between the vector components. Here we have reduced the overall {\em dimensionality} of the problem to $n$, but two issues remain. First, the number of $\{ h_{\mu}(x_k) \}_{k=1}^M$ functional evaluations depends on $M$ (Sec.~3.3 of Ref.~\cite{chaturantabut:2737} discusses the issue for a variety of nonlinear parametrized functions). Second, the dominant operation count for the approximation is seen to be of order $\bigo{Mn}$. We will return to these issues shortly.

\subsection{\bf The empirical interpolation method} \label{sec:EIM}

The main idea behind the EIM is to replace an expensive approximation by projection with a relatively inexpensive interpolation, without sacrificing accuracy. The  algorithm identifies a set of basis-specific interpolation points through a greedy selection criteria. When applied to several physical dimensions, irregular shaped domains, and rather generic parameterized spaces, the  EIM \cite{Maday_2009,Aanonsen2009,Eftang:2011} and its discrete counterpart, the DEIM \cite{chaturantabut:2737,Chaturantabut5400045}, and points have shown remarkable robustness and efficiency.

Suppose we have a set of basis vectors.  In particular, the algorithm applies to the reduced basis for the space $F_n$ or the space $\widetilde{F}_m$ to be constructed in Sec.~\ref{sec:Products}. The algorithm selects the DEIM points in physical space $\Omega$ and builds an associated interpolation matrix $\bP$. This matrix $\bP$ interpolates the columns of the matrix $\bV$ introduced in Sec.~\ref{sec:RB} (see Appendix~\ref{app:algs}.2). The columns of $\bP$ are unit vectors with a single unit entry at the location of the empirical interpolation points and zero elsewhere. For example, the first column of $\bP$  has a unit entry at the location $\mathrm{arg}\hspace{-1pt}\max | \be_1 |$ (see step 2 of Algorithm~\ref{algo:DEIM}). Furthermore, $\bP^T  \hfd{}$ extracts an $n$-subvector which is exactly equivalent to evaluating $h_{\mu}$ at $n$ empirical interpolation points $\{ p_i \}_{i=1}^n$. The algorithm to generate $\bP$ and $\{ p_i \}_{i=1}^n$ is given in App.~\ref{app:algs}.

With $\{ p_i \}_{i=1}^n$ the DEIM approximation is~\cite{chaturantabut:2737}
\begin{align} \label{eim_disc}
    \hfd{} \approx {\cal I}_n[\hfd{}] := \bV ( \bP^T\bV)^{-1} \bP^T \hfd{} \, ,
\end{align}      
which is indeed an interpolant $\bP^T {\cal I}_n[\hfd{}] =  \bP^T (\bV (\bP^T\bV)^{-1} \bP^T \hfd{} ) = \bP^T \hfd{}$. Note that $\bP^T\bV$ is invertible when the columns of $\bV$ are linearly independent. Substituting ${\cal I}_n[\hfd{}]$ from Eq.~\eqref{eim_disc} into Eq.~\eqref{eq:disc_full_L2prod} gives
\begin{equation}\label{approx-integral}
I_{\tt d}(i, j)
\approx \langle {\cal I}_n[\hfd{i}], {\cal I}_n[\hfd{j}] \rangle_{\tt d} 
\approx 
  \left(  ( \bP^T\bV)^{-1} \bP^T \hfd{i}  \right)^{\dagger} \left(  ( \bP^T\bV)^{-1} \bP^T \hfd{j} \right)   \, .
\end{equation}
As we only have to evaluate $\bP^T \hfd{} \in \complex^n$, with $n < M$ in many applications,
we have reduced the complexity (that is, the number of required functional evaluations) to $\bigo{n}$
(compare with \eqref{eqn:RB_discrete}). However, due to the two matrix-vector multiplications 
an evaluation cost of $\bigo{n^2}$ has
appeared. 
Such unacceptable scaling arises when
approximating the individual functions $h_{\mu}$ instead of the
products (integrands) $h_{\mu_i}^*h_{\mu_j}$ directly. We consider the
generation of a reduced basis for products of functions next.

\subsection{\bf Two-step greedy approach for the approximation of product of target functions} \label{sec:Products}

\begin{align}  \label{eq:greedy_for_products}
  \nearrow  \hspace{0.5cm}   & \xrightarrow{\mbox{\it Path \#1}} \hspace{0.4cm} \searrow{}  \nonumber \\ 
     \widetilde{\cF}  \xrightarrow[\mbox{\it Path \#2 (i)} ]{} & F_{n^2}  \xrightarrow[\mbox{\it Path \#2 (ii)}]{}   
      \widetilde{F}_m
\end{align}

As discussed at the end of Secs.~\ref{sec:RB} and~\ref{sec:EIM}, we seek to approximate all possible integrands $g_{ij}=h_{\mu_{i}}^* h_{\mu_{j}} \in \widetilde{\cF}$, where $\widetilde{\cF}$ is a subset of the Hilbert space $\cH = L^2(\Omega)$. 
A natural approach, which we refer to as a {\em direct} one, is to build a reduced basis for $\widetilde{\cF}$
through a greedy algorithm as described in
Algorithm~\ref{algo:RB-greedy} and is marked as {\it Path \#1} in \eqref{eq:greedy_for_products}. In that case the results of Secs.~\ref{sec:RB} and~\ref{sec:EIM} are directly applicable, with $\cF$ and $F_n$ replaced by 
$\widetilde{\cF}$ and $\widetilde{F}_{m}$ respectively. 
In Sections \ref{sec:overview} and \ref{sec:SPAWaveforms} we argue that this direct approach can be impractical for some applications. Advanced algorithms for sampling or building the training space, perhaps adaptively~\cite{Eftang:2010,Eftang:2011,Bui:2007,Haasdonk:2010}, might overcome this issue, though, and allow for a direct approach to such large problems.

In Sec.~\ref{sec:overview} we motivated a {\it two-step greedy approach} via {\it Path \#2} in \eqref{eq:greedy_for_products}. 
In the first step ({\it Path \#2 (i)} in \eqref{eq:greedy_for_products})
we generate, through a first greedy as in Algorithm~\ref{algo:RB-greedy}, a reduced basis whose span $F_n$ approximates $\cF$. 
Next one might construct a set $F_{n^2}$, consisting of all 
$n^2$ products $e_k^*e_l  /\| e_k^*e_l \|_{\tt d}$. Another option, and indeed the one used in our numerical experiments, is to use a training set $\cT_{n}^2 := \{ ( \mu_i , \mu_j ) \}_{i,j=1}^n$ and associated normalized products $h_{\mu_i}^*h_{\mu_j} / \| h_{\mu_i}^*h_{\mu_j} \|_{\tt d} $ to define $F_{n^2}$, where $\{\mu_i \}_{i=1}^n$ are the greedy points identified by the first greedy algorithm when approximating ${\cal F}$ by  $F_n$. In other words, we do not necessarily use an orthonormal reduced basis for $F_n$ (see Remark~\ref{remark:TwoStepChoices}, below).
In either case, we approximate $F_{n^2}$ through a second RB-greedy  ({\it Path 2 (ii)} in \eqref{eq:greedy_for_products}), carried out again as in Algorithm~\eqref{algo:RB-greedy} but with $F_{n^2}$ as the training space. The  result is an orthonormal set of $m$ reduced basis $\{\ibasis{i}\}_{i=1}^m$ such that
$\widetilde{F}_m := \mathrm{span}\{ \ibasis{1},\dots,\ibasis{m}\}  $
accurately approximates $F_{n^2}$, and in turn the full set $\widetilde{\cF}$.

To summarize, the second step of our proposed two-step greedy algorithm to build $\widetilde{F}_m$ is given by Algorithm~\ref{algo:RB-greedy_product}, and for simplicity we choose the tolerance for the second greedy step to be equal to the first one ($\epsilon$ in Eq.(\ref{eq:greedy_error1})). To better present the algorithm it will be useful to introduce new notation for elements of $\cT_n^2$. Define $\widetilde{\mu}_{k(i,j)} := (\mu_i, \mu_j)$ to be a re-indexing of 
the array $\{(\mu_i, \mu_j)\}_{i,j=1}^n$ into 
$\{ \widetilde{\mu}_k \}_{k=1}^{n^2}$, where $k$ is a one-to-one function which takes two integers $i$ and $j$ and returns a unique integer $k(i,j)$. 

\begin{remark} \label{remark:TwoStepChoices}
The first greedy returns both the greedy points $\{ \mu_i \}_{i=1}^n$, an orthonormal basis $\{ e_i \}_{i=1}^n$, and the  {\em greedy basis functions} $\{h_{\mu_i}\}_{i=1}^n$, serving as a non-orthonormal basis set for the same space $F_n$. As discussed above, from either $\{h_{\mu_i}\}_{i=1}^n$ or $\{ e_i \}_{i=1}^n$ one can build the set 
$F_{n^2}$. In our numerical experiments we choose $\{ h_{\mu_i} \}_{i=1}^n$ as basis, partly because we need not store the orthonormal basis vectors $\{ e_i \}_{i=1}^n$ but only the greedy points $\{ \mu_i \}_{i=1}^n$.
Clearly the implementation and interpretation of Algorithm \ref{algo:RB-greedy_product} will depend on this choice. For example, when using $\{h_{\mu_i}\}_{i=1}^n$ it makes sense to discuss the selected greedy parameter points $\widetilde{\mu}_k$ while for $\{ e_i \}_{i=1}^n$ no such interpretation holds; instead one should view Step 5d as returning a column index when $F_{n^2}$ is thought of as an $M \times n^2$ matrix. 
\end{remark}

\begin{algorithm} [Two-step RB-Greedy approximation for $\widetilde{F}_m$]  \label{algo:RB-greedy_product} $\quad$\\
$\left[\{\widetilde{e}_\ell\}_{\ell=1}^m,\{\widetilde{\mu}_\ell\}_{\ell=1}^m\right]$ = Two-step RB-Greedy 
$\del{ \epsilon, \{\mu_i \}_{i=1}^n, \{ e_i \}_{i=1}^n }$   \\
{\bf Comment:} $\{ e_i \}_{i=1}^n$ are either orthonormal or greedy basis functions (cf. remark~\ref{remark:TwoStepChoices})
\begin{itemize}
\setlength{\itemsep}{2pt}
\item[(1)] Let $\cT_n^2 = \{ ( \mu_i , \mu_j ) \}_{i,j=1}^n$
\item[(2)] Let $F_{n^2} = \{ e_i^*e_j / \| e_i^*e_j \|_{\tt d}  \}_{i,j=1}^n$ 
\item[(3)] Set $m=0$ and define $\greedy{0}{M}(F_{n^2}; \cH) := 1$
\item[(4)] Choose an arbitrary $g \in F_{n^2}$ and set $\widetilde{e}_1 := g$ $\quad$ {\bf Comment:} $\|g\|_{\tt d} = 1$
\item[(5)] do, while $\greedy{m}{M}(F_{n^2}; \cH) \ge \epsilon$
\begin{itemize}
\setlength{\itemsep}{2pt}
\item[(a)] $m = m + 1$
\item[(b)] $\greedy{m}{M}(g) := \left\|  g - \proFtwo{m}{} g \right\|_{\tt d}$ for all $g \in F_{n^2}$
\item[(c)] $\greedy{m}{M}(F_{n^2}; \cH) =  \sup_{ g \in F_{n^2}} \left\{ \greedy{m}{M}(g) \right\}$ 
                 $\quad$ 
\item[(d)] $\widetilde{\mu}_{m+1} :=  \argsup_{g \in F_{n^2}}  \left\{ \greedy{m}{M} (g) \right\} \quad \mbox{(greedy sweep)} $
\item[(e)] $\widetilde{e}_{m+1} := g_{\widetilde{\mu}_{m+1}} - \proFtwo{m}{} g_{\widetilde{\mu}_{m+1}} \quad \mbox{(Gram-Schmidt)} $
\item[(f)] $\widetilde{e}_{m+1} := \widetilde{e}_{m+1} / \| \widetilde{e}_{m+1} \|_{\tt d} \quad \mbox {(normalization)}$
\end{itemize}
\end{itemize}
\end{algorithm}
\vspace{10pt}

A DEIM approximation for the integrand leads to an efficient and simple reduced order quadrature rule~\eqref{eqn:RO_quad} and is the subject of the next Section.

\section{Reduced order quadratures (ROQ)} \label{sec:ROQ}

In classical theory of Gaussian quadratures one seeks to maximize the exactness for the integration of polynomials. Here the goal is to empirically maximize the accuracy of inner products between elements of $\cF$.
Notice that an approximation of (discrete) inner products may be carried out in either a reduced space $F_n \approx \cF$ or $\widetilde{F}_m \approx \widetilde{\cF}$.
In Sec.~\ref{sec:EIM} we noted that working in $F_n$ leads to a pessimistic $\bigo{n^2}$ cost for inner products (see the discussion below Eq.~(\ref{approx-integral})). Thus, for applications where one needs fast evaluations of many scalar products the approximation space $\widetilde{F}_m$ is preferable. For the construction of $\widetilde{F}_m$ we refer to \eqref{eq:greedy_for_products}. 

Section~\ref{sec:ROQ_Alg} focuses on a ROQ stemming from $\widetilde{F}_m$, although only very minimal changes are needed to adapt Algorithm~\ref{algo:ROQ} for obvious variations. Indeed, one such variation is considered in our first numerical experiment. In Section~\ref{sec:ROQ_newPoints} we consider extensions of the ROQ construction to situations where one might only be able to evaluate integrands at a set of points $\{y_i\}_{i=1}^{M'}$ which do not correspond to the nodes used in the quadrature rule $\{ x_k, \omega_k \}_{k=1}^M$. 
In Section~\ref{sec:rates_deim} we present the DEIM
interpolation error estimates, followed by some ROQ error estimates in Section~\ref{sec:rates_ROQ}. 

\subsection{\bf ROQ Algorithm} \label{sec:ROQ_Alg}

Suppose we are given a set of functions $\cF$ and an arbitrary quadrature rule $\{ x_k, \omega_k \}_{k=1}^M$ for 
the discrete inner product~\eqref{eq:disc_full_L2prod}. The following algorithm generates the ROQ nodes and weights.

\begin{algorithm}[Construction of Reduced Order Quadratures]  \label{algo:ROQ}  $\quad$\\
$\left[\{ \widetilde{p}_\ell, \omega_\ell^\mathrm{ROQ} \}_{\ell=1}^m\right]$ 
         = ROQ ($\epsilon$, $\cT_K$, $\{ x_k, \omega_k \}_{k=1}^M$)   
\begin{enumerate}[(1)]
  \item {\bf Approximation of $\widetilde{\cF}$}: We have two options to approximate
                 $\widetilde{\cF}$ by $\widetilde{F}_m$, (cf. \eqref{eq:greedy_for_products}).
            
            \vspace{7pt}
            \begin{enumerate}[\it \text{Path} \#1]  
                   \item Consider the training set $\cT_K^2 = \{ ( \mu_i , \mu_j ) \}_{i,j=1}^K$ and apply the Greedy 
                            Algorithm~\ref{algo:RB-greedy}, 
                             with obvious modifications, to generate $\widetilde{F}_m = \mbox{span} 
                             \{ \widetilde{e}_\ell \}_{\ell=1}^m$ and greedy points
                             $\{ \widetilde{\mu}_\ell \}_{\ell=1}^m  \subset \cT_K^2$ such that 
                             any product 
                             $h_{\mu_i}^* h_{\mu_j} \in \widetilde{\cF}$ can be approximated by its projection 
                             $\widetilde{\cP}_m$ as:
                             \begin{align*}
                                 \norm{h_{\mu_i}^* h_{\mu_j} - \proFtwo{m}{} \del{ h_{\mu_i}^* h_{\mu_j} } }_{\tt d} & \le \epsilon \, ,
                                  \qquad \forall \del{\mu_i, \mu_j} \in \cT_K^2 \, .
                            \end{align*}
                            $Note$: This direct approach can be expensive; we therefore advocate the following two-step procedure. 
                          
                   \vspace{7pt}     
                   \item 
                   
                        \begin{enumerate}[$(i)$]
                              \item {\bf Approximation of $\cF$}:
                                       Consider the training set $\cT_{K} = \{ \mu_i \}_{i=1}^K$ and apply the Greedy 
                                       Algorithm~\ref{algo:RB-greedy} to generate $F_n = \mbox{span} \{ e_\ell \}_{\ell=1}^n$
                                       and greedy points $\cT_n = \{ \mu_\ell \}_{\ell=1}^n \subset \cT_K$ such that any $h_{\mu} \in \cF$ can be 
                                       approximated by its 
                                       projection $\cP_n$ as:
                                       \begin{align*}
                                             \norm{h_{\mu} - \cP_n  h_{\mu} }_{\tt d} & \le \epsilon \, , \qquad \forall \mu \in \cT_{K} \, .
                                     \end{align*}
                              \item 
                                    {\bf Construction of $F_{n^2}$}:  
                                     Define the training set 
                                     $\cT_n^2 = \cT_n \times \cT_n = \{  ( \mu_i , \mu_j ) \}_{i,j=1}^n$ and training space
                                     $F_{n^2} = \set{ e_i^*e_j / \| e_i^*e_j \|_{\tt d}}_{i,j=1}^n$.                                     
                                      Another choice for $F_{n^2}$ is 
                                      $\set{ h_{\mu_i}^*h_{\mu_j} / \| h_{\mu_i}^*h_{\mu_j} \|_{\tt d}}_{i,j=1}^n$,
                                      we refer to Remark~\ref{remark:TwoStepChoices} for justification.  
                              \item {\bf Approximation of $\widetilde{\cF}$}:
                                        Apply the Greedy Algorithm~\ref{algo:RB-greedy_product} to generate 
                                        $\widetilde{F}_m = \mbox{span} \{ \widetilde{e}_\ell \}_{\ell=1}^m$ and 
                                        $\{ \widetilde{\mu}_\ell \}_{\ell=1}^m \subset \cT_{n}^2 \subset \cT_{K}^2$ 
                                        such that any product 
                                        $e_i^* e_j \in F_{n^2}$ can be approximated by its projection $\widetilde{\cP}_m$ as:
                                        \begin{align*}
                                              \norm{e_i^* e_j - \proFtwo{m}{} \del{ e_i^* e_j } }_{\tt d} & \le \epsilon \, .
                                        \end{align*}
                            \end{enumerate}
                                  
            \end{enumerate}
          \item Define the matrix $\widetilde\bV = [ \widetilde{\be}_1, \dots, \widetilde{\be}_m ] \in \complex^{M \times m}$                
               where, for example, the $\ell^\mathrm{th}$ column is 
              $\widetilde{\be}_\ell = \del{ \widetilde{e}_\ell(x_1), \dots , \widetilde{e}_\ell(x_M) }^T$.
              
         \item {\bf Generation of empirical interpolation points}: 
               Apply the DEIM Algorithm \ref{algo:DEIM} with $\widetilde\bV$ as an input to compute the DEIM points
               $\{ \widetilde{p}_\ell \}_{\ell=1}^m \subset \{ x_i \}_{i=1}^M$ and the interpolation matrix $\widetilde{\bP}$.
         \item Compute the ROQ weights by
             \begin{align} \label{eqn:ROQ_weights}
                 \del{\bomega^\mathrm{ROQ}}^T := \bomega^T \widetilde{\bV} ( \widetilde{\bP}^T \widetilde{\bV})^{-1}  \, . 
             \end{align}       
\end{enumerate}
\end{algorithm}

\vspace{10 pt}

Step 4 deserves a bit more explanation. With the interpolation matrix $\widetilde{\bP}$ from Step 3 we first build an expression for the DEIM interpolant of some product $g \in \widetilde{\cF}$ 
\begin{align} \label{eq:eim_disc}
{\bf g} \approx \widetilde{\cal I}_m[{\bf g}] := \widetilde{\bV} ( \widetilde{\bP}^T \widetilde{\bV})^{-1} \widetilde{\bP}^T {\bf g} \, .
\end{align}  
The reduced order weights are then found by substituting this expression into Eq.~\eqref{eq:disc_full_L2prod} to produce the desired approximation with $\bg = \bh_{\mu_i}^* \circ \bh_{\mu_j}$, 
\begin{align}
I_{\tt d}(i,j) = 
\bomega^T {\bf g} \approx \bomega^T \widetilde{\cal I}_m[{\bf g}] = \left[ \bomega^T \widetilde{\bV} ( \widetilde{\bP}^T \widetilde{\bV})^{-1} \right] \widetilde{\bP}^T {\bf g} = \sum_{i=1}^m \omega_i^\mathrm{ROQ} g(\widetilde{p}_i) \, . \label{eq:ROQ}
\end{align}
Notice that the term $\widetilde{\bP}^T {\bf g}$ is just evaluation of $g$ at the DEIM points $\{ \widetilde{p}_i \}_{i=1}^m$ (which need not be ordered, but can be if desired). Furthermore, the weights are explicitly parameter independent and for less accuracy we might consider using the first $m' \le m$ functional evaluations of $g$
\begin{align*}
\bomega^T \widetilde{\cal I}_{m'}[{\bf g}] \approx \sum_{i=1}^{m{}'} \alpha_i^\mathrm{ROQ} g(\widetilde{p}_i) \, ,
\end{align*}
which, as this is equivalent to carrying out the computation in an $m{}'$-dimensional subspace of $\widetilde{F}_m$, is also a valid integration rule. In general the weights $\alpha_i^\mathrm{ROQ}$ associated with this $m'$ point quadrature rule will not be a subset of the weights $\omega_i^\mathrm{ROQ}$ computed for the $m$ point rule. Notice that the hierarchical nature of the empirical interpolation set allows one to build application-specific nested quadratures of arbitrary order and depth (i.e. more than one embedded method). For example, suppose $m=20$; one might combine the $10$-point, $15$-point, and $20$-point ROQ rules to yield a nested quadrature scheme.

\begin{remark}[FLOP-Count] \label{rmk:flop-count}
Let $K$ be the number of parameters in the training space to approximate $\cF$ by $F_n$. If $\widehat{m}$ and $m$ 
denote the number of reduced basis for direct ({\em Path \#1}) and two-step greedy ({\em Path \#2}) respectively, then 
numerically we observe that $m \approx \widehat{m}$. 
The cost of ROQ Algorithm~\ref{algo:ROQ} using a direct and a two-step greedy is $\bigo{K^2M\widehat{m}}$ and
$\bigo{n^2Mm + KMn}$, respectively. This implies that for $K \gg n$ (as is observed in our numerical experiments), the two-step greedy is highly efficient. 
For the details of FLOP counts we refer to Appendix~\ref{app:flop-count}. 
\end{remark}

Next we state some results which 
highlight the importance of using an accurate quadrature rule $\{ x_k, \omega_k \}_{k=1}^M$; 
these will be verified in Section~\ref{sec:Leg}. 

\begin{theorem} \label{thm:exact-integration}
Integration of the basis functions $\{ \widetilde{e}_\ell \}_{\ell=1}^m$
with either of the quadrature rules $\{ x_i, \omega_i \}_{i=1}^M$ or $\{ \widetilde{p}_\ell, \omega_\ell^\mathrm{ROQ} \}_{\ell=1}^m$ yield identical results.
\end{theorem}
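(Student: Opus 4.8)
The plan is to reduce both sides of the claimed identity to a single matrix expression and then exploit the reproducing property of DEIM on its own basis. First I would translate the two integration rules into the matrix-vector notation of Sections~\ref{sec:RB}--\ref{sec:EIM}. Writing $\widetilde{\be}_\ell$ for the $\ell^\mathrm{th}$ column of $\widetilde{\bV}$, the integral of $\widetilde{e}_\ell$ under the underlying rule $\{x_i,\omega_i\}_{i=1}^M$ is simply $\bomega^T \widetilde{\be}_\ell$. For the ROQ rule, since $\widetilde{\bP}^T$ extracts exactly the values $\{\widetilde{e}_\ell(\widetilde{p}_k)\}_{k=1}^m$ at the DEIM nodes, the ROQ integral of $\widetilde{e}_\ell$ is $(\bomega^\mathrm{ROQ})^T \widetilde{\bP}^T \widetilde{\be}_\ell$. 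So the theorem amounts to showing $(\bomega^\mathrm{ROQ})^T \widetilde{\bP}^T \widetilde{\be}_\ell = \bomega^T \widetilde{\be}_\ell$ for each $\ell$.

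Next I would substitute the definition of the ROQ weights from Eq.~\eqref{eqn:ROQ_weights}, namely $(\bomega^\mathrm{ROQ})^T = \bomega^T \widetilde{\bV}(\widetilde{\bP}^T\widetilde{\bV})^{-1}$, so that the ROQ integral becomes $\bomega^T \widetilde{\bV}(\widetilde{\bP}^T\widetilde{\bV})^{-1}\widetilde{\bP}^T \widetilde{\be}_\ell$. I recognize the operator $\widetilde{\bV}(\widetilde{\bP}^T\widetilde{\bV})^{-1}\widetilde{\bP}^T$ acting on $\widetilde{\be}_\ell$ as precisely the DEIM interpolant $\widetilde{\cal I}_m[\widetilde{\be}_\ell]$ defined in Eq.~\eqref{eq:eim_disc}. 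Hence the whole statement reduces to the claim that the DEIM interpolant reproduces every element of the basis $\{\widetilde{e}_\ell\}_{\ell=1}^m$ exactly.

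The crux is then this exactness of DEIM on its own basis, and it is purely algebraic: since $\widetilde{\be}_\ell$ is itself the $\ell^\mathrm{th}$ column of $\widetilde{\bV}$, applying the interpolation operator returns it unchanged. Concretely, $(\widetilde{\bP}^T\widetilde{\bV})^{-1}\widetilde{\bP}^T \widetilde{\be}_\ell$ is the $\ell^\mathrm{th}$ column of $(\widetilde{\bP}^T\widetilde{\bV})^{-1}(\widetilde{\bP}^T\widetilde{\bV}) = \bI_m$, i.e. the $\ell^\mathrm{th}$ standard unit vector, so left-multiplication by $\widetilde{\bV}$ recovers $\widetilde{\be}_\ell$. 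Thus $\widetilde{\cal I}_m[\widetilde{\be}_\ell]=\widetilde{\be}_\ell$, and the ROQ integral collapses to $\bomega^T \widetilde{\be}_\ell$, matching the underlying rule. The one hypothesis to invoke here is the invertibility of $\widetilde{\bP}^T\widetilde{\bV}$, which holds because the columns of $\widetilde{\bV}$ are linearly independent by construction of the greedy/DEIM procedure (as already noted below Eq.~\eqref{eq:eim_disc}).

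There is no serious obstacle in this argument; the only care needed is the bookkeeping that $\widetilde{\bP}^T$ indeed yields the DEIM-node evaluations and that each basis function incurs zero interpolation error. Conceptually, the theorem is just the statement that a quadrature obtained by integrating an interpolant is exact on every function the interpolant reproduces exactly---and here that set is, by design, the basis of $\widetilde{F}_m$ itself. (The same computation with $\ell$ ranging over all $m$ basis functions also underlies the consistency statement for $m=M$ referenced in Corollary~\ref{cor:consistency}.)
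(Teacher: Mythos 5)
Your proof is correct and is essentially the paper's own argument: both reduce the ROQ integral of $\widetilde{e}_\ell$ to $\bomega^T \widetilde{\bV}(\widetilde{\bP}^T\widetilde{\bV})^{-1}\widetilde{\bP}^T\widetilde{\be}_\ell$ and cancel $(\widetilde{\bP}^T\widetilde{\bV})^{-1}(\widetilde{\bP}^T\widetilde{\bV})=\bI$, the paper doing this for all columns of $\widetilde{\bV}$ at once while you do it column by column under the (equivalent) interpretation that DEIM reproduces its own basis exactly. No gaps.
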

\begin{proof}
The integration of basis $\{ \widetilde{e}_i \}_{i=1}^m$ using ROQ (cf. \eqref{eqn:ROQ_weights}, \eqref{eq:ROQ}
in vector notation) is
\[
   I_{\mathrm{ROQ}} =  \del{\bomega^{\mathrm{ROQ}}}^T \widetilde{\bP}^T \widetilde\be_i 
        = \bomega^T \widetilde{\bV} ( \widetilde{\bP}^T \widetilde{\bV})^{-1} \widetilde\bP^T \widetilde\be_i \, , \quad i = 1, \dots, m \, .
\]
Recalling that $\widetilde{\bV} = [\widetilde\be_1, \dots, \widetilde\be_m]$, integration of all the basis 
$\{ \widetilde{e}_i \}_{i=1}^m$ can be written as $\del{\bomega^{\mathrm{ROQ}}}^T  \widetilde{\bP}^T \widetilde\bV =  \bomega^T \widetilde\bV$, 
where $\bomega^T \widetilde{\bV}$ is precisely the integration of all the basis vectors $\{ \widetilde{e}_i \}_{i=1}^m$ with the quadrature rule $\{ x_k, \omega_k \}_{k=1}^M$.
\end{proof}
\begin{corollary}[Consistency of ROQ rule] \label{cor:consistency}
When $m=M$ the quadrature rules $\{ x_i, \omega_i \}_{i=1}^M$ and $\{ \widetilde{p}_\ell, \omega_\ell^\mathrm{ROQ} \}_{\ell=1}^m$ are identical up-to ordering of the quadrature points.
\end{corollary}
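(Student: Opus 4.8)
The plan is to reduce everything to a single structural observation: when $m=M$ the interpolation matrix $\widetilde{\bP}$ is a permutation matrix, after which the weight formula collapses by elementary linear algebra. First I would settle the statement about the nodes. By construction the DEIM points $\{\widetilde{p}_\ell\}_{\ell=1}^m$ are a subset of $\{x_k\}_{k=1}^M$, and the greedy selection in Algorithm~\ref{algo:DEIM} never repeats a node, since the interpolation residual vanishes at points already chosen. Hence when $m=M$ the $M$ pairwise distinct DEIM nodes exhaust the $M$ original quadrature points, so $\{\widetilde{p}_\ell\}_{\ell=1}^M$ and $\{x_k\}_{k=1}^M$ coincide as sets and the points are identical up to ordering.

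Next I would record the consequence for $\widetilde{\bP}\in\complex^{M\times M}$. Its columns are the standard unit vectors selecting the nodes $\widetilde{p}_\ell$; because these indices are distinct and cover all of $\{1,\dots,M\}$, $\widetilde{\bP}$ is a permutation matrix, hence orthogonal, so $\widetilde{\bP}^T\widetilde{\bP}=\widetilde{\bP}\widetilde{\bP}^T=\bI$ and $\widetilde{\bP}^{-1}=\widetilde{\bP}^T$. Moreover $\widetilde{\bV}$ is now a square $M\times M$ matrix with linearly independent (indeed orthonormal) columns, hence invertible.

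The conclusion then follows most cleanly from Theorem~\ref{thm:exact-integration}, which gives $(\bomega^{\mathrm{ROQ}})^T\widetilde{\bP}^T\widetilde{\bV}=\bomega^T\widetilde{\bV}$. Cancelling the invertible $\widetilde{\bV}$ on the right yields $(\bomega^{\mathrm{ROQ}})^T\widetilde{\bP}^T=\bomega^T$, and right-multiplying by $\widetilde{\bP}=(\widetilde{\bP}^T)^{-1}$ gives $(\bomega^{\mathrm{ROQ}})^T=\bomega^T\widetilde{\bP}$. Reading off the $\ell$-th component, $\omega_\ell^{\mathrm{ROQ}}=\omega_{k}$ where $x_k=\widetilde{p}_\ell$; that is, each ROQ node inherits exactly the original weight attached to the matching quadrature point. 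Combined with the node identification of the first step, this is precisely the asserted equality of the two rules up to ordering. As a cross-check one can instead start from the defining formula \eqref{eqn:ROQ_weights} and use $(\widetilde{\bP}^T\widetilde{\bV})^{-1}=\widetilde{\bV}^{-1}\widetilde{\bP}$ to recover $(\bomega^{\mathrm{ROQ}})^T=\bomega^T\widetilde{\bV}\widetilde{\bV}^{-1}\widetilde{\bP}=\bomega^T\widetilde{\bP}$, in agreement with the route through Theorem~\ref{thm:exact-integration}.

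I expect the only genuine point requiring care to be the claim that $\widetilde{\bP}$ is a permutation matrix, equivalently that the DEIM nodes are pairwise distinct so that $m=M$ selections saturate all $M$ points. This is where one must lean on the non-repetition property of the greedy DEIM selection rather than treat it as self-evident; once that is granted, both the orthogonality of $\widetilde{\bP}$ and the invertibility of $\widetilde{\bV}$ are immediate, and the remainder is a one-line cancellation.
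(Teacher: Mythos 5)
Your proof is correct and follows essentially the same route as the paper's: both invoke Theorem~\ref{thm:exact-integration} and cancel the invertible $\widetilde{\bV}$ to get $(\bomega^{\mathrm{ROQ}})^T\widetilde{\bP}^T=\bomega^T$. You merely make explicit what the paper leaves as "trivially coincide," namely that the non-repeating DEIM selection forces $\widetilde{\bP}$ to be a permutation matrix when $m=M$, which is a worthwhile detail but not a different argument.
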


\begin{proof}
Using Theorem~\ref{thm:exact-integration}, we have $\del{\bomega^{\mathrm{ROQ}}}^T  \widetilde{\bP}^T \widetilde\bV =  \bomega^T \widetilde\bV$. Apply $\widetilde\bV^{-1}$ on the right to deduce $\del{\bomega^{\mathrm{ROQ}}}^T  \widetilde{\bP}^T =  \bomega^T$.
\end{proof}

\begin{remark}
If the quadrature rule $\{ x_i, \omega_i \}_{i=1}^M$ integrates the basis functions $\{ \widetilde{e}_\ell \}_{\ell=1}^m$
exactly then ROQ built using Algorithm~\ref{algo:ROQ} also integrates those basis functions exactly.
\end{remark}

\begin{remark}
As a special case consider $\widetilde{\cF}$ to be a space of polynomials of degree $m-1$ with weight $W=1$ and suppose the basis is specified by an ordered set of Legendre polynomials. Then the $m$-point ROQ rule will exactly integrate any polynomial of at most degree $m-1$ provided the quadrature rule $\{ x_k, \omega_k \}_{k=1}^M$ is at least $m-1$ exact. 
\end{remark}
             
\subsection{\bf Convergence Estimates}\label{sec:convergence_rates}

\subsubsection{\bf DEIM Convergence} \label{sec:rates_deim} 
We shall first state a DEIM interpolation error estimate with respect to the discrete $L^{\infty}$-norm. Then we will recall the DEIM error estimate with respect to the discrete $L^2$-norm from 
\cite{chaturantabut:2737} and, finally, combine this estimate with those for the RB-greedy basis construction from \cite{DeVore2012}. %
\begin{theorem}[discrete empirical interpolation method]\label{thm:deim-estimate}
Let the set of reduced basis $\{ e_\ell \}_{\ell=1}^n$ be orthonormal with respect to the discrete inner product defined in \eqref{eq:dp_disc} and $h^{opt}_\mu \in F_n$ be the optimal approximation of $h_\mu$ with respect to the discrete $L^\infty$-norm. Then for every
$\mu \in \cP$ and $x_k$, with $k=1,\dots,M$, 
\begin{align} \label{eq:deim-error-Linf}
\max_{ 1 \le k \le M} \left| h_\mu(x_k) - {\cal I}_n[h_\mu](x_k) \right|
      \le  \left( 1+  \Lambda_{n,\infty} \right) \max_{ 1 \le k \le M}  \left| h_{\mu}(x_k) - h^{opt}_\mu(x_k) \right| \, ,
\end{align}
where $\Lambda_{n,\infty} = \triplenorm{\cI_n}_\infty = \triplenorm{  \bV (\bP^T \bV)^{-1}\bP^T  }_{\infty} $ denotes the Lebesgue constant.
 
\noindent In the case of the discrete $L^2$-norm, 
$h_{\mu}^{opt} = \proF{n}{} h_\mu$  and we have for all $\mu \in \cP$
\begin{align} \label{eq:deim-error-L2}
\norm{ h_\mu - {\cal I}_n[h_\mu]  }_{\tt d} \le  \Lambda_{n,2}
\norm{ h_{\mu} - \proF{n}{} h_\mu }_{\tt d}\, , 
\end{align}
where $\Lambda_{n,2} = \triplenorm{\cI_n}_2 = \triplenorm{  \bV (\bP^T \bV)^{-1}\bP^T  }_{2}$  
\footnote{The 2-norm and $\infty$-norm of a matrix $\bQ \in \complex^{M\times n}$ is defined by 
\[
     \triplenorm{\bQ}_2 := \max_{\bu \neq 0} \frac{ \| \bQ \bu \|_{\tt d }} { \| \bu \|_{\tt d} } = \del{ \lambda_{\max} (\bQ^{\dagger} \bQ) }^{1/2}\, , \qquad
     \triplenorm{\bQ}_\infty :=  \max_{1 \le k \le M} \sum_{\ell=1}^{n} | q_{k\ell} | \, ,
\]
where $q_{k\ell}$ denotes the $(k,\ell)$ entry of $\bQ$.}

\end{theorem}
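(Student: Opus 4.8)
The plan is to recognize the DEIM interpolation operator $\cI_n = \bV(\bP^T\bV)^{-1}\bP^T$ as an (oblique) projection onto $F_n$ and then run a Lebesgue-constant argument tailored to each norm. Two algebraic facts drive everything: first, $\cI_n$ is idempotent, $\cI_n^2 = \bV(\bP^T\bV)^{-1}(\bP^T\bV)(\bP^T\bV)^{-1}\bP^T = \cI_n$; second, $\cI_n$ acts as the identity on $F_n$, since any $f = \bV\bc$ satisfies $\cI_n f = \bV(\bP^T\bV)^{-1}\bP^T\bV\bc = \bV\bc = f$. In particular $\cI_n g = g$ for every $g \in F_n$, which is exactly the leverage needed to compare $h_\mu$ against its best approximant in $F_n$.

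For the discrete $L^\infty$ estimate, I would fix any $h^{opt}_\mu \in F_n$ and use $\cI_n h^{opt}_\mu = h^{opt}_\mu$ to write $h_\mu - \cI_n[h_\mu] = (h_\mu - h^{opt}_\mu) - \cI_n[h_\mu - h^{opt}_\mu]$. Taking the discrete $\infty$-norm, applying the triangle inequality, and bounding the second term by the induced operator norm $\Lambda_{n,\infty} = \triplenorm{\cI_n}_\infty$ gives $\max_k|h_\mu(x_k) - \cI_n[h_\mu](x_k)| \le (1 + \Lambda_{n,\infty})\max_k|h_\mu(x_k) - h^{opt}_\mu(x_k)|$; choosing $h^{opt}_\mu$ to be the best $\infty$-approximant makes the right-hand side as small as possible. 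This is the classical interpolation-error bound in terms of a Lebesgue constant, the factor $1 + \Lambda_{n,\infty}$ arising from $\triplenorm{I - \cI_n}_\infty \le 1 + \triplenorm{\cI_n}_\infty$.

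For the discrete $L^2$ estimate I would exploit the Hilbert structure to sharpen the constant from $1 + \Lambda_{n,2}$ to $\Lambda_{n,2}$. Since $\proF{n}{}h_\mu \in F_n$ we have $\cI_n \proF{n}{} = \proF{n}{}$, whence $h_\mu - \cI_n[h_\mu] = (I - \cI_n)(h_\mu - \proF{n}{}h_\mu)$, as a direct expansion confirms. The key ingredient is then the identity $\triplenorm{I - \cI_n}_2 = \triplenorm{\cI_n}_2 = \Lambda_{n,2}$, valid for any bounded idempotent with $\cI_n \neq 0, I$ on a Hilbert space; here the relevant Hilbert space is $\complex^M$ equipped with the weighted inner product $\langle\cdot,\cdot\rangle_{\tt d}$ of \eqref{eq:dp_disc}, and $\triplenorm{\cdot}_2$ is the operator norm it induces. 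Combining these gives $\norm{h_\mu - \cI_n[h_\mu]}_{\tt d} \le \triplenorm{I - \cI_n}_2 \norm{h_\mu - \proF{n}{}h_\mu}_{\tt d} = \Lambda_{n,2}\norm{h_\mu - \proF{n}{}h_\mu}_{\tt d}$, which is \eqref{eq:deim-error-L2}.

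The main obstacle is the projection-norm identity $\triplenorm{I - \cI_n}_2 = \triplenorm{\cI_n}_2$: it is what distinguishes the sharp $L^2$ constant from the crude $1 + \Lambda_{n,2}$ and has no $\infty$-norm analogue. Because idempotency of $\cI_n$ is purely algebraic and independent of the weights, the identity transfers verbatim to the weighted space $(\complex^M, \langle\cdot,\cdot\rangle_{\tt d})$; one should only take care that the second equality quoted in the footnote, $\triplenorm{\bQ}_2 = (\lambda_{\max}(\bQ^\dagger\bQ))^{1/2}$, is the unit-weight expression, whereas the argument above uses $\triplenorm{\cdot}_2$ solely as the norm induced by $\|\cdot\|_{\tt d}$. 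A fallback, if one prefers to avoid invoking the identity, is to cite the DEIM bound of \cite{chaturantabut:2737} directly, at the cost of replacing $\Lambda_{n,2}$ by the looser constant $\triplenorm{(\bP^T\bV)^{-1}}_2 \ge \Lambda_{n,2}$.
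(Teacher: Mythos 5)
Your proposal is correct and follows essentially the same route as the paper: the $L^2$ bound is obtained exactly as in the paper's proof, by noting $\cI_n$ fixes $F_n$ so that $h_\mu - \cI_n[h_\mu] = (\mathbb{I}-\cI_n)[h_\mu - \cP_n h_\mu]$, and then invoking the projection-norm identity $\triplenorm{\mathbb{I}-\cI_n}_2 = \triplenorm{\cI_n}_2$ (the paper cites the same Xu--Zikatanov/Szyld result). The $L^\infty$ estimate is declared ``straightforward and omitted'' in the paper, and your triangle-inequality Lebesgue-constant argument is the standard one it has in mind; your remark about the footnote's unit-weight formula versus the $\norm{\cdot}_{\tt d}$-induced operator norm is a fair and careful caveat.
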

\begin{proof}
 The proof of \eqref{eq:deim-error-Linf} and \eqref{eq:deim-error-L2} are straightforward and are omitted.
   Furthermore,
    \eqref{eq:deim-error-L2} follows from \cite[Lemma 3.2]{chaturantabut:2737}, but we 
    state it for completeness. Since $\cI_n [h_\mu^{opt}](x_k) = h_{\mu}^{opt}(x_k)$ for $k=1,\dots,M$, we get
    \begin{align*}
        \norm{h_\mu - \cI_n\sbr{h_\mu}}_{\tt d}
          &= \norm{ \del{ \mathbb{I} - \cI_n } [ h_\mu - h_{\mu}^{opt}  ] }_{\tt d} \\
          &\le  \triplenorm{ \cI_n }_2  \norm{ h_\mu - h_{\mu}^{opt}  }_{\tt d} \, ,
    \end{align*}
    where the last equality follows from the fact that 
    $\triplenorm{ \mathbb{I} - \cI_n }_2 =  \triplenorm{ \cI_n }_2$ (see, for example \cite{JXu_LZikatanov_2003a,DBSzyld_2006a}).
\end{proof}

\begin{remark} \label{remark:DEIM_bounds}
Notice that the DEIM Algorithm \ref{algo:DEIM} seeks to minimize the interpolation error as measured by a discrete $L^\infty$-norm (cf.~Step 6, Algorithm \ref{algo:DEIM}), for which an error bound of the type $\eqref{eq:deim-error-Linf}$ is closely related. The RB-greedy Algorithm \ref{algo:RB-greedy}, however, exactly minimizes the term $\norm{ h_{\mu} - \proF{n}{} h_\mu }_{\tt d}$ on the right hand side of the error bound \eqref{eq:deim-error-L2} and is therefore computable.
Observing that $\Lambda_{n,2} \le \triplenorm{ \bV }_2  \triplenorm{ (\bP^T \bV)^{-1} }_2$ (because $\triplenorm{ \bP^T }_2=1$), one might consider directly minimizing the error $\norm{h_\mu - \cI_n\sbr{h_\mu}}_{\tt d}$ by minimizing the norm $\triplenorm{ (\bP^T \bV)^{-1} }_2$ . Indeed, whenever $\bV^\dagger \bV = {\bf 1}_{n \times n}$ we have exactly 
$\Lambda_{n,2} = \triplenorm{ (\bP^T \bV)^{-1} }_2$ which is more efficient to compute than $\triplenorm{  \bV (\bP^T \bV)^{-1}\bP^T  }_{2}$. Throughout the numerical experiments section~\ref{sec:numerics}, in addition to the error bound \eqref{eq:deim-error-L2} we 
monitor 
\begin{align} \label{eq:deim-error-L2_2}
  \| h_\mu - {\cal I}_n[h_\mu] \|_{\tt d} 
 \le  \triplenorm{ \bV }_2 \triplenorm{ (\bP^T \bV)^{-1} }_2 \norm{ h_{\mu} - \proF{n}{} h_\mu }_{\tt d} \, ,
\end{align}
which we have written here for future reference. 

\end{remark}

\begin{corollary}[RB-Greedy-DEIM error bounds]\label{cor:deim-rb-estimate}
The following practical RB-greedy-DEIM estimate holds
\begin{align*}
\| h_\mu - {\cal I}_n[h_\mu] \|_{\tt d} &\lesssim \Lambda_{n,2} \epsilon , \quad \forall h_\mu \in \cF .
\end{align*}
Furthermore, if the Kolmogorov $n$-width decays exponentially (with order $\alpha$) as in 
\eqref{eq:kolmogorov-width-exp}, then 
 the error \eqref{eq:deim-error-L2} decays with the same order,
\begin{align*}
\| h_\mu - {\cal I}_n[h_\mu] \|_{\tt d} \lesssim \sqrt{2C}  \Lambda_{n,2}  e^{-c_1 n^{\alpha}} \, ,
\end{align*}
whereas if the Kolmogorov $n$-width decays algebraically with order $\beta$ as in \eqref{eq:kolmogorov-width-alg}, 
then \eqref{eq:deim-error-L2} again decays with the same order $\beta$
\begin{align*}
\| h_\mu - {\cal I}_n[h_\mu] \|_{\tt d} \lesssim 2^{5\beta+1} C_a \Lambda_{n,2} n^{-\beta},
\end{align*}
where $C_a>0$ and $\beta>0$ are some constants. 
\end{corollary}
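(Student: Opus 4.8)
The plan is to derive all three bounds by concatenating inequalities already established in the excerpt; the corollary is essentially a repackaging of Theorem~\ref{thm:deim-estimate} together with the greedy convergence rates. The common starting point is the discrete $L^2$ DEIM estimate \eqref{eq:deim-error-L2}, which controls the interpolation error by the projection error times the Lebesgue constant,
\begin{align*}
\norm{ h_\mu - {\cal I}_n[h_\mu] }_{\tt d} \le \Lambda_{n,2} \norm{ h_{\mu} - \proF{n}{} h_\mu }_{\tt d} \, .
\end{align*}
Hence the entire task reduces to bounding the projection error $\norm{ h_{\mu} - \proF{n}{} h_\mu }_{\tt d}$ uniformly over $\cF$, which is exactly what the greedy error $\greedy{n}{M}(\cF;\cH)$ and its convergence rates provide.

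For the first (practical) estimate I would bound the projection error by the greedy error and then invoke the a-posteriori observation \eqref{sigma-epsilon}, namely $\norm{ h_{\mu} - \proF{n}{} h_\mu }_{\tt d} \le \greedy{n}{M}(\cF;\cH) \lesssim \epsilon$ for all $\mu \in \cP$. Substituting into the displayed DEIM estimate immediately yields $\| h_\mu - {\cal I}_n[h_\mu] \|_{\tt d} \lesssim \Lambda_{n,2}\epsilon$. The approximate inequality $\lesssim$ rather than a strict $\le$ is forced here because the rigorous algorithmic guarantee \eqref{eq:greedy_error1} holds only on the training space $\cF_{\mathrm{train}}$; its extension to the whole continuum is the problem-dependent bound \eqref{sigma-epsilon}.

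For the exponential and algebraic cases I would instead bound $\greedy{n}{M}(\cF;\cH)$ through the near-optimality results \eqref{eq:kolmogorov-width-exp} and \eqref{eq:kolmogorov-width-alg} borrowed from \cite{DeVore2012}. In the exponential case one has $\greedy{n}{M}(\cF;\cH) \le \sqrt{2C}\, e^{-c_1 n^{\alpha}}$, and combining with the DEIM estimate gives $\| h_\mu - {\cal I}_n[h_\mu] \|_{\tt d} \lesssim \sqrt{2C}\, \Lambda_{n,2}\, e^{-c_1 n^{\alpha}}$; in the algebraic case one has $\greedy{n}{M}(\cF;\cH) \le 2^{5\beta+1} C_a\, n^{-\beta}$, giving $\| h_\mu - {\cal I}_n[h_\mu] \|_{\tt d} \lesssim 2^{5\beta+1} C_a\, \Lambda_{n,2}\, n^{-\beta}$.

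I do not expect a genuine obstacle, since the result is a direct corollary proved by a short chain of inequalities. The only point requiring care is the bookkeeping of the approximate symbol $\lesssim$: one must track where the honest $\le$ of the DEIM estimate and of the $n$-width rates is replaced by the problem-dependent continuum extension \eqref{sigma-epsilon}, and also recall that the passage between the discrete norm $\norm{\cdot}_{\tt d}$ and the continuum norm used to define $\greedy{n}{M}$ holds only up to machine precision. It is also worth emphasizing that $\Lambda_{n,2}$ itself depends on $n$, so these are not bounds uniform in $n$ unless the Lebesgue constant is separately controlled.
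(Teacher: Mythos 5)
Your proof is correct and follows exactly the paper's own route: combine the discrete $L^2$ DEIM estimate \eqref{eq:deim-error-L2} with the a-posteriori bound \eqref{sigma-epsilon} for the practical estimate, and with the near-optimality rates \eqref{eq:kolmogorov-width-exp} and \eqref{eq:kolmogorov-width-alg} for the exponential and algebraic cases. Your added remarks on the provenance of the $\lesssim$ symbol and the $n$-dependence of $\Lambda_{n,2}$ are accurate and, if anything, more explicit than the paper's one-line proof.
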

\begin{proof}
 Consider \eqref{sigma-epsilon} and \eqref{eq:deim-error-L2} and apply \eqref{eq:kolmogorov-width-exp} and 
  \eqref{eq:kolmogorov-width-alg}.  
\end{proof}
%

\subsubsection{\bf ROQ Convergence} \label{sec:rates_ROQ}

Next we present the error estimate for the case in which the reduced basis  
$\{ \widetilde{e}_\ell \}_{\ell=1}^m$ for $\widetilde{\cF}$ are generated through a direct approach via {\em Path \#1} in
\eqref{eq:greedy_for_products}.
\begin{theorem}[approximation error ROQ (direct approach)]   \label{T:approx-scalar-one-step-greedy}
 Let $\epsilon>0$ be an error tolerance and  $\proFtwo{m}{}: \widetilde{\cF} \to\widetilde F_m$ be the projection 
operator defined in Algorithm~\ref{algo:ROQ} ({\it Path \#1}), with accuracy $\epsilon > 0$ 
in the discrete $L^2$-norm $\norm{\cdot}_{\tt d}$.  Furthermore, let 
$\widetilde\cI_m: \widetilde{\cF} \to\widetilde F_m$ be a DEIM operator built using Step (3) (RB-greedy-DEIM)
of Algorithm~\ref{algo:ROQ}. 
Given two arbitrary functions $h_{\mu_i},h_{\mu_j} \in \cF$, the following estimate holds
\begin{align*}
      \left|  I_{\tt d}(i,j) - I_\mathrm{ROQ}(i,j) \right| \lesssim \epsilon \del{| \Omega |_d \triplenorm{\widetilde{\cI}_m}_2 }  \norm{ h_{\mu_i}^* h_{\mu_j} }_{\tt d} \, .
\end{align*}
\end{theorem}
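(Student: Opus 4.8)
The plan is to reduce the statement to a single DEIM interpolation estimate for the integrand $g_{ij} := h_{\mu_i}^* h_{\mu_j}$ and then chain together the bounds already established in Section~\ref{sec:rates_deim}. First I would observe, directly from Eq.~\eqref{eq:ROQ}, that with the sampled integrand $\bg = \bh_{\mu_i}^* \circ \bh_{\mu_j}$ the two quantities being compared are exactly $I_{\tt d}(i,j) = \bomega^T \bg$ and $I_\mathrm{ROQ}(i,j) = \bomega^T \widetilde{\cal I}_m[\bg]$, where $\widetilde{\cal I}_m[\bg] = \widetilde{\bV}(\widetilde{\bP}^T\widetilde{\bV})^{-1}\widetilde{\bP}^T\bg$ is the DEIM interpolant of the product. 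Hence, with no approximation yet,
\[
   I_{\tt d}(i,j) - I_\mathrm{ROQ}(i,j) = \bomega^T\del{ \bg - \widetilde{\cal I}_m[\bg] } ,
\]
so the entire quadrature error is the weighted sum of the DEIM interpolation residual of $g_{ij}$.

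Next I would convert this weighted sum into a discrete inner product against the constant vector $\mathbf{1} := (1,\dots,1)^T$, using $\bomega^T \bv = \langle \mathbf{1}, \bv\rangle_{\tt d}$ for any $\bv$, and apply Cauchy--Schwarz for $\langle\cdot,\cdot\rangle_{\tt d}$:
\[
   \abs{ I_{\tt d}(i,j) - I_\mathrm{ROQ}(i,j) } = \abs{ \langle \mathbf{1},\, \bg - \widetilde{\cal I}_m[\bg] \rangle_{\tt d} } \le \norm{\mathbf{1}}_{\tt d}\, \norm{ \bg - \widetilde{\cal I}_m[\bg] }_{\tt d} ,
\]
where $\norm{\mathbf{1}}_{\tt d} = \del{\sum_{k=1}^M \omega_k}^{1/2}$ is the discrete measure of the domain, i.e. (a power of) $|\Omega|_d$.

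It then remains to estimate the residual with the tools of Theorem~\ref{thm:deim-estimate}. Applying the discrete-$L^2$ DEIM bound \eqref{eq:deim-error-L2} to the basis $\{\widetilde{e}_\ell\}_{\ell=1}^m$ and to $g_{ij}\in\widetilde{\cF}$ gives $\norm{ \bg - \widetilde{\cal I}_m[\bg] }_{\tt d} \le \triplenorm{\widetilde{\cal I}_m}_2 \norm{ g_{ij} - \proFtwo{m}{} g_{ij} }_{\tt d}$, and since the direct greedy of Algorithm~\ref{algo:ROQ} ({\it Path \#1}) built $\widetilde F_m$ to accuracy $\epsilon$ on \emph{normalized} products, homogeneity of the orthogonal projection yields the relative bound $\norm{ g_{ij} - \proFtwo{m}{} g_{ij} }_{\tt d} \lesssim \epsilon \norm{g_{ij}}_{\tt d} = \epsilon \norm{h_{\mu_i}^* h_{\mu_j}}_{\tt d}$. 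Multiplying the three inequalities together delivers the asserted estimate.

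The main obstacle --- and the reason the statement reads $\lesssim$ rather than $\le$ --- lies in the last step. The $\epsilon$-accuracy is guaranteed by Algorithm~\ref{algo:ROQ} only over the finite training set $\cT_K^2$, whereas $h_{\mu_i},h_{\mu_j}$ are arbitrary elements of $\cF$, so $g_{ij}$ need not belong to the training space. To push the projection bound from $\cT_K^2$ to all of $\widetilde{\cF}$ one must invoke the same continuum/a-posteriori hypothesis already used in \eqref{sigma-epsilon} and Corollary~\ref{cor:deim-rb-estimate}, namely that the measured greedy error controls the projection error uniformly over the continuum. The accompanying bookkeeping is to make the homogeneous rescaling precise: the tolerance is attained on unit-norm functions, so it is applied to $g_{ij}/\norm{g_{ij}}_{\tt d}$ and then multiplied back by $\norm{g_{ij}}_{\tt d}$, which is exactly what generates the $\norm{h_{\mu_i}^* h_{\mu_j}}_{\tt d}$ factor. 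The precise power of $|\Omega|_d$ that appears is whichever convention the paper adopts for the discrete volume; the Cauchy--Schwarz step naturally produces $\norm{\mathbf{1}}_{\tt d}$.
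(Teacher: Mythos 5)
Your proposal is correct and follows essentially the same route as the paper's proof: identify the quadrature error with the weighted sum of the DEIM residual of $g_{ij}$, apply Cauchy--Schwarz in the discrete inner product, bound the residual via the identity $g-\widetilde\cI_m[g]=(\mathbb{I}-\widetilde\cI_m)[g-\widetilde\cP_m g]$ together with $\triplenorm{\mathbb{I}-\widetilde\cI_m}_2=\triplenorm{\widetilde\cI_m}_2$ (which is exactly the content of Eq.~\eqref{eq:deim-error-L2} that you cite), and finish with the greedy tolerance on normalized products. Your observation that Cauchy--Schwarz naturally yields $\norm{\mathbf{1}}_{\tt d}=\del{\sum_k\omega_k}^{1/2}$ rather than the full discrete volume, and that extending the $\epsilon$-bound from the training set to arbitrary $h_{\mu_i},h_{\mu_j}\in\cF$ requires the a-posteriori hypothesis \eqref{sigma-epsilon}, correctly identifies the two points the paper's proof passes over silently.
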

\begin{proof}
We recall that given $h_{\mu_i}, h_{\mu_j} \in \cF$, with $\mu_i, \mu_j \in \cP$, we have a discrete quadrature rule $I_{\tt d}(i,j)$ 
given in Eq.~\eqref{eq:disc_full_L2prod}. On the other hand, we have the following expression for the
ROQ integral
\[
I_{\mathrm{ROQ}}(i,j) = \sum_{k=1}^M \omega_k \widetilde\cI_m[h_{\mu_i}^* h_{\mu_j}](x_k)    \, .                 
\] 
An application of Cauchy-Schwarz implies
\begin{align*}
  \left| I_{\tt d}(i,j) - I_{\mathrm{ROQ}}(i,j)  \right|
  &\le \sum_{k=1}^M \left| \omega_k \del{ h_{\mu_i}^*(x_k) h_{\mu_j}(x_k) - \widetilde{\cI}_m[ h_{\mu_i}^* h_{\mu_j} ](x_k) } 
                                       \right|   \\
  &\le | \Omega |_d \norm{  h_{\mu_i}^* h_{\mu_j} - \widetilde{\cI}_m[ h_{\mu_i}^* h_{\mu_j} ]  }_{\tt d} .
\end{align*}
Next the fact that 
$\widetilde{\cI}_m \left[ \widetilde\cP_m [h_{\mu_i}^* h_{\mu_j}] \right] = \widetilde\cP_m [h_{\mu_i}^* h_{\mu_j}]$, implies
\[
  h_{\mu_i}^* h_{\mu_j} - \widetilde{\cI}_m[ h_{\mu_i}^* h_{\mu_j} ]  =
    \del{ \mathbb{I} - \widetilde{\cI}_m } \left[   h_{\mu_i}^* h_{\mu_j} - \widetilde{\cP}_m[ h_{\mu_i}^* h_{\mu_j} ]  \right]
\]    
Finally using $\triplenorm{ \mathbb{I} - \widetilde{\cI}_m}_2 = \triplenorm{\widetilde{\cI}_m}_2$ 
(cf. \cite{JXu_LZikatanov_2003a,DBSzyld_2006a}), we get
\begin{align*}  
\left| I_{\tt d}(i,j) - I_{\mathrm{ROQ}}(i,j)  \right| 
     \le   | \Omega |_d \triplenorm{\widetilde{\cI}_m}_2 \sigma_{m}\del{\widetilde{\cF}; \cH} \norm{ h_{\mu_i}^* h_{\mu_j} }_{\tt d}  \, .        
\end{align*}
Then the greedy estimate {\em Path \#1} in Algorithm~\ref{algo:ROQ} gives the required estimate. 
\end{proof}

\begin{corollary}[ROQ error bound]\label{cor:roq-err-estimate}
The following practical ROQ error estimate holds for every $\mu_i , \mu_j  \in \cP$
\begin{align} \label{eq:roq-error}
\left|  I_{\tt c}(i,j) - I_\mathrm{ROQ}(i,j) \right| \lesssim \left| I_{\tt c}(i,j) - I_{\tt d}(i,j) \right| + 
\epsilon \del{ | \Omega |_d \triplenorm{\widetilde{\cI}_m}_2 }  \norm{ h_{\mu_i}^* h_{\mu_j} }_{\tt d} .
\end{align}
\end{corollary}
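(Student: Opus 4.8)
The plan is to combine the already-proven ROQ approximation error (Theorem~\ref{T:approx-scalar-one-step-greedy}) with a trivial triangle-inequality splitting that separates the error of the underlying discrete quadrature rule from the error of the ROQ approximation to it. The target quantity $|I_{\tt c}(i,j) - I_\mathrm{ROQ}(i,j)|$ measures how well the reduced order quadrature approximates the \emph{continuum} integral $I_{\tt c}$, whereas Theorem~\ref{T:approx-scalar-one-step-greedy} only controls the distance between $I_\mathrm{ROQ}$ and the \emph{discrete} quadrature $I_{\tt d}$. The natural bridge between the two is the discrete integral $I_{\tt d}(i,j)$ itself, which we insert and extract.

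First I would write the elementary triangle inequality
\begin{align*}
\left| I_{\tt c}(i,j) - I_\mathrm{ROQ}(i,j) \right|
  &= \left| \left( I_{\tt c}(i,j) - I_{\tt d}(i,j) \right) + \left( I_{\tt d}(i,j) - I_\mathrm{ROQ}(i,j) \right) \right| \\
  &\le \left| I_{\tt c}(i,j) - I_{\tt d}(i,j) \right| + \left| I_{\tt d}(i,j) - I_\mathrm{ROQ}(i,j) \right| \, .
\end{align*}
The first term on the right is exactly the quadrature error of the underlying rule $\{x_k, \omega_k\}_{k=1}^M$ applied to the integrand $h_{\mu_i}^* h_{\mu_j}$, and this is left as-is: it is precisely the term whose smallness motivates the use of an accurate base rule, and it cannot be controlled by anything internal to the ROQ construction.

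Next I would invoke Theorem~\ref{T:approx-scalar-one-step-greedy} directly to bound the second term. That theorem gives
\[
\left| I_{\tt d}(i,j) - I_\mathrm{ROQ}(i,j) \right| \lesssim \epsilon \del{ | \Omega |_d \triplenorm{\widetilde{\cI}_m}_2 } \norm{ h_{\mu_i}^* h_{\mu_j} }_{\tt d} \, ,
\]
valid for the two arbitrary functions $h_{\mu_i}, h_{\mu_j} \in \cF$ with $\mu_i, \mu_j \in \cP$. Substituting this bound into the triangle inequality immediately yields the claimed estimate~\eqref{eq:roq-error}, completing the argument.

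The proof is genuinely routine; there is no real obstacle, only a bookkeeping point worth flagging. The ROQ rule and its weights $\omega_\ell^\mathrm{ROQ}$ are constructed over the training parameters, yet the corollary asserts the bound \emph{for every} $\mu_i, \mu_j \in \cP$, not merely on the training set $\cT_K^2$. The implicit justification is the a-posteriori observation~\eqref{sigma-epsilon}, namely that the greedy error over the training space extends to the whole continuum $\cP$ with essentially the same tolerance; this is exactly why the symbol $\lesssim$ (rather than $\le$) appears, and why the statement is labelled a \emph{practical} estimate. I would note this dependence explicitly so that the step from the training-set bound of Theorem~\ref{T:approx-scalar-one-step-greedy} to the continuum claim is transparent.
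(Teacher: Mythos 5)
Your proof is correct and is precisely the argument the paper intends: the corollary is stated without proof immediately after Theorem~\ref{T:approx-scalar-one-step-greedy} because it follows from that theorem by inserting $I_{\tt d}(i,j)$ and applying the triangle inequality, exactly as you do. Your closing remark on why $\lesssim$ appears --- the extension from the training set to all of $\cP$ via the a-posteriori observation~\eqref{sigma-epsilon} --- is also consistent with the paper's discussion.
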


Our numerical examples indicate the ROQ built from a two-step greedy (using {\em Path \#2}) has an error bound as in 
Theorem~\ref{T:approx-scalar-one-step-greedy}, but proving such bound is beyond the scope of this paper.

\subsection{\bf ROQ at new quadrature points} \label{sec:ROQ_newPoints}
Consider the scenario in which the integrands are available at a set of points which is different from the ones used to build the reduced basis. 
A typical example would be the case in which Gaussian quadratures are used in the RB-greedy algorithm but data is given at equally spaced samples.
We continue denoting the set of points and quadrature rule used to build the basis as $\{ x_k, \omega_k \}_{k=1}^M$, and  the second rule $\{ y_i, \tau_i \}_{i=1}^{M'}$. Therefore, it is not unreasonable to assume that $M' > M$. Our goal here is to generate an ROQ with respect to  $\{ y_i, \tau_i \}_{i=1}^{M'}$ under this scenario.

\begin{algorithm}[Construction of ROQ for $\{ y_i, \tau_i \}_{i=1}^{M'}$]  $\quad$\\
$\left[\{ \widetilde{p}_\ell, \tau_\ell^\mathrm{ROQ} \}_{\ell=1}^m\right]$ 
         = ROQ-NEW $\left( \{ \widetilde{\mu}_\ell \}_{\ell=1}^m\text{,}\{ y_i, \tau_i \}_{i=1}^{M'} \right)$  
\begin{enumerate}[(1)]
  \item Orthogonalize the (ordered) product of functions $\{ g_{\widetilde{\mu}_\ell} \}_{\ell=1}^m$
           using the quadrature rule $\{ y_i, \tau_i \}_{i=1}^{M'}$ and let the resulting vectors form the columns of a matrix 
$\widetilde{\bV} = [ \widetilde{\be}_1, \dots, \widetilde{\be}_m ] \in \complex^{M' \times m}$ where, for example, the $\ell^\mathrm{th}$ column is $\widetilde{\be}_\ell = \del{ \widetilde{e}_\ell(y_1), \dots , \widetilde{e}_\ell(y_{M'}) }^T$.
  \item Apply Steps 3 and 4 from Algorithm~\ref{algo:ROQ} to generate the ROQ points 
$\{ \widetilde{p}_\ell \}_{\ell=1}^m \subset \{ y_i \}_{i=1}^{M'}$, interpolation matrix $\widetilde{\bP}$ and ROQ weights $(\boldsymbol{\tau}^\mathrm{ROQ})^T :=  \boldsymbol{\tau}^T \widetilde{\bV} ( \widetilde{\bP}^T \widetilde{\bV})^{-1}$.
\end{enumerate}
\end{algorithm}
Accuracy and conditioning of the resulting ROQ rule are guaranteed provided (i) we are able to carry out the orthogonalization in Step 1 up to a certain tolerance and (ii) the new quadrature rule is accurate enough such that $\norm{ h_\mu - \cP_n h_\mu }_\tau \lesssim \epsilon $.

\section{Numerical examples}   \label{sec:numerics}

Here we discuss two sets of numerical experiments.
As a first example we compare ROQ using orthogonal polynomials on the interval $[-1,1]$ as basis (that is, the basis is not built through a greedy approach) with the well known case of Gaussian quadratures and, in particular, integration of Runge's function. This study reveals several important features: i) ROQ are well conditioned, ii) ROQ and Gaussian quadrature rules have a similar point and weight distribution,
iii) the importance of an accurate quadrature rule $\{x_i,\omega_i\}_{i=1}^M$ (cf. Theorem~\ref{thm:exact-integration}),
iv) the ROQ rule's efficiency, compared to Gaussian quadratures, is expected to increase with the number of spatial dimensions.

Next we turn to a challenging example drawn from gravitational wave physics, namely the calculation of overlaps between ``chirp'' gravitational waves for compact binary coalescences. Details on implementation, error bounds, and a discussion of computational cost are provided, as well as the physical motivation of the problem. In particular, we find that ROQs exhibit exponential asymptotic convergence in the calculation of overlaps, with a profile of the error function in terms of quadrature points qualitatively similar to the greedy error curve of the training space in terms of the number of reduced basis elements. {\em We emphasize that this exponential convergence in computing overlaps is, as expected, present even when the function is sampled at equally spaced quadrature points. In particular, this means that reduced order quadratures can be used as an efficient down-sampling strategy when function evaluations from experimental observations are given at (for example) fixed intervals.We also show the staggering offline savings of our two-step greedy approach compared to a direct one.}

\subsection{\bf Comparison with Gauss-Legendre quadratures} \label{sec:Leg}

In order to highlight the essential features of our approach, in this subsection we use Legendre polynomials 
$\{P_\ell(x)\}_{\ell=1}^m$ defined on $x \in [-1,1]$ as an orthonormal basis (with respect to $W(x)=1$) instead of a reduced basis as 
generated in Step 1 Algorithm~\ref{algo:ROQ}.
Steps 3 and 4 of Algorithm~\ref{algo:ROQ} require specifying an underlying
quadrature rule, which in this subsection will be either an $M$ point Gauss-Legendre or extended trapezoidal rule. 
The resulting ROQ point and weight distribution (see Subsection~\ref{subsec:points_weights}), conditioning 
(see Subsection~\ref{subsec:ROQ_conditioning}) and application to Runge's function 
(see Subsection~\ref{subsec:Runges_function}) are considered next.

\subsubsection{\bf Point and weight distribution of ROQ} \label{subsec:points_weights}
To build the ROQ points and weights an extended trapezoidal rule is used and all $m=24$ Legendre polynomials are 
sampled at $M=1,000$ equidistant points. Fig.~\ref{fig:RBwgts} compares the distribution of points and weights for the Gauss-Legendre quadrature (red dots) and ROQ (blue crosses). Both weight and point distributions are similar, and in particular the points are not equally spaced but instead cluster towards the end points at $x=\pm 1$. 
One of the ROQ weights is equal to $-0.00496089441576999$ at $0.775775775775776$.
Negative weights can be problematic in quadrature rules due to roundoff errors leading to poorly conditioned formula~\cite{Quarteroni2010}. We turn to this issue next.

\begin{figure}[ht]
\centering
\subfigure[ROQ and Gauss-Legendre point and weight distributions.]{
  \includegraphics[width=0.48\linewidth]{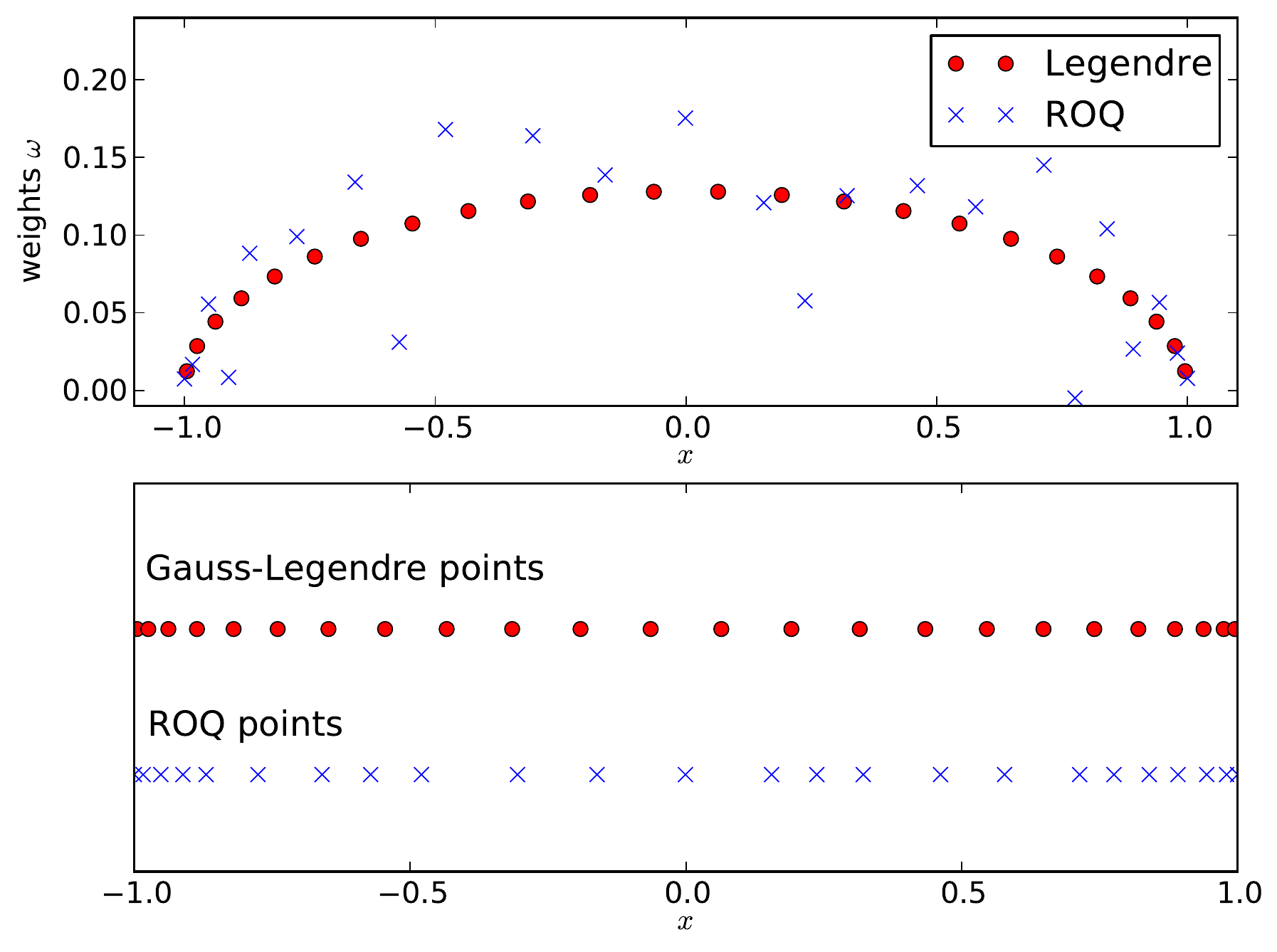}
  \label{fig:RBwgts}}
\subfigure[ROQ (solid) and Gauss-Legendre (dashed) condition number.]{
  \includegraphics[width=0.48\linewidth]{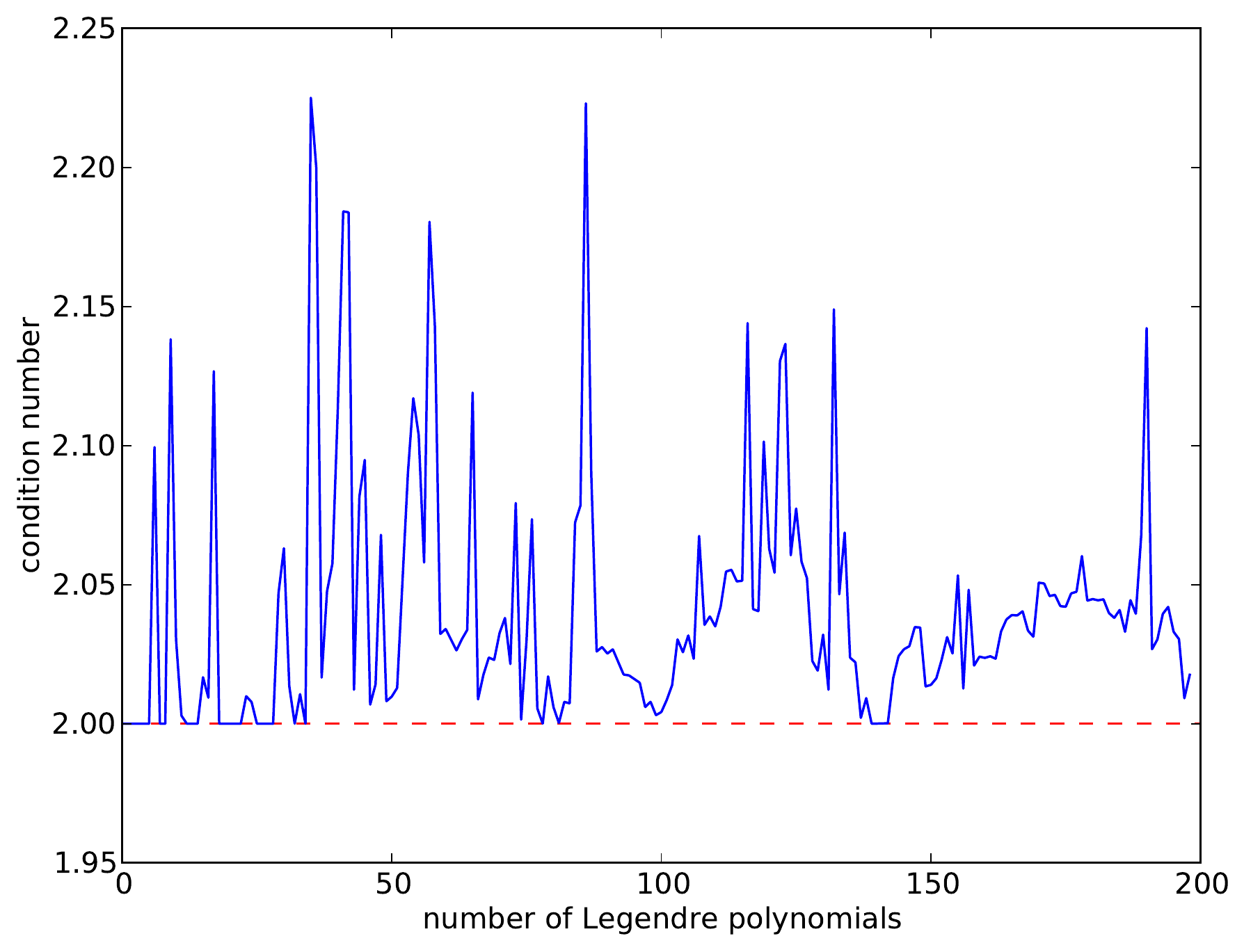}
  \label{fig:RBcond}}
\caption{The top left scatter plot shows the weight $\omega_k$ and point distributions for 
Gauss-Legendre and ROQ using $m=24$ Legendre polynomials as a basis (see Subsection~\ref{subsec:points_weights} 
for more details), whereas the bottom left one shows just the point locations. 
The right plot gives the condition number $\sum_{k=1}^m |\omega_k|$ for an $m$-point Gauss-Legendre and ROQ rule,
 with $m \in [2,200]$ (see Subsection~\ref{subsec:ROQ_conditioning} for more details).}
\label{fig:RBcondwgts}
\end{figure}

\subsubsection{\bf Conditioning of ROQ} \label{subsec:ROQ_conditioning}

Consider the absolute condition number $\sum_k |\omega_k|$ of a quadrature formula which, on the integration domain $x\in [-1,1]$, is exactly $2$ if all the weights $\omega_k$ are positive as can be seen by integration of unity. If some weights are negative then the condition number is necessarily larger than $2$. A classic example of ill-conditioning is the Newton-Cotes rule, where $\sum_k |\omega_k| > 2$ and grows without bound when using more than 8 nodes~\cite{Quarteroni2010}. Fig.~\ref{fig:RBcond} shows the absolute condition number $\sum_k |\omega_k|$ for a Gauss-Legendre (dashed red line) and reduced order (solid blue line) quadrature.
The dashed red line shows the optimal value of $2$, which is achieved by the Gauss-Legendre quadrature case. The condition number for ROQ remains below $2.25$ for the first 200 Legendre polynomials 
(see Subsection~\ref{subsec:points_weights}) and has no noticeable growth trend. 
This plot was generated using $M=1,000$ equally spaced points 
on $x\in [-1,1]$: this resolution is sufficient for demonstrating well conditioning, but the resulting ROQ integration rule would be of low accuracy. We have considered up to $100,000$ points, always obtaining behavior similar to that one of Fig.~\ref{fig:RBcond}.

\subsubsection{\bf Integration of Runge's function} \label{subsec:Runges_function}

\begin{figure}[ht]
\centering
\includegraphics[width=0.5\linewidth]{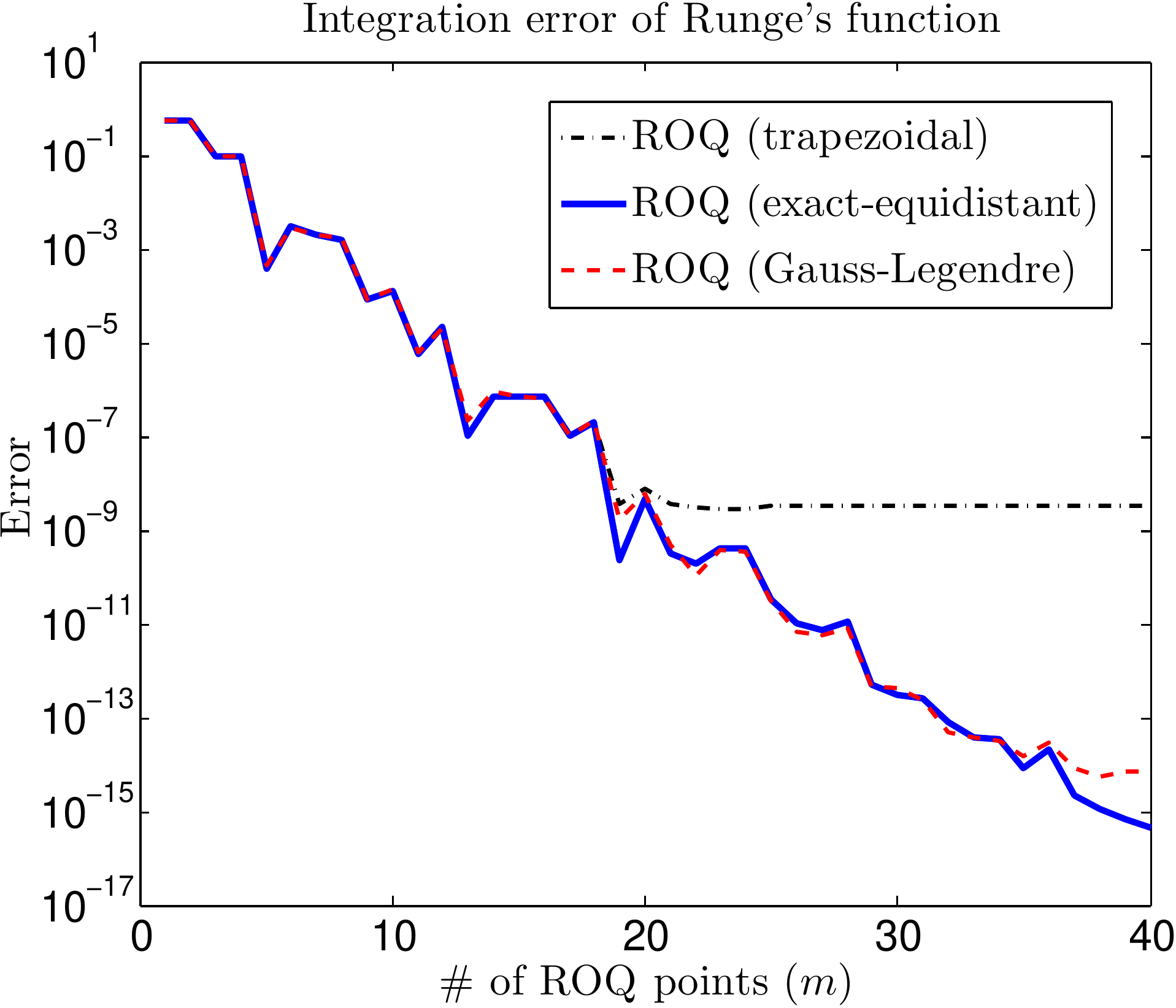}
\caption{
Convergence of the integral $\int_{-1}^1 \left(1+x^2 \right)^{-1} dx $ to $2 \tan^{-1} (1)$ using three different ROQ rules whose generation (using Legendre polynomials as basis) is described in Sec.~\ref{subsec:Runges_function}. Integration error from a ROQ-trapezoidal constructed with a $M_1=10,000$ point extended trapezoidal rule is given by the dash-dot black line. Due to the extended trapezoidal rule's inability to accurately integrate higher order Legendre polynomials this curve flattens out around $10^{-9}$. If we {\em exactly} integrate these basis functions by setting $\bomega^T \widetilde{\bV} = [\sqrt{2},0,\dots,0]$ in Eq.~\eqref{eqn:ROQ_weights} while continuing to use the ROQ-trapezoidal's quadrature points the accuracy is restored as seen from the ROQ-equidistant error curve (solid blue). Integration error from an ROQ constructed with a $M_2=400$ point Gauss-Legendre is given by the dashed red line.}
\label{fig:RungeInt}
\end{figure}

Many general features of ROQ can be ascertained by integration of Runge's function 
\begin{align*}
 \int_{-1}^1 \frac{1}{\left(1+x^2 \right)} \ dx = 2 \tan^{-1} (1) \, ,
\end{align*} 
which is a classic example of oscillatory errors due to high-order interpolation. In the following experiment, integration of Runge's function is carried out with three different ROQ rules built from a basis consisting of $m$ Legendre polynomials. In each case the Legendre polynomials are evaluated at $M_1 = 10,000$ equally spaced and $M_2=400$ Gauss-Legendre points, and the integration errors are computed as $|2 \tan^{-1} (1) - I_\mathrm{ROQ}|$.

First, integration of Runge's function is carried out using an $m$-point ROQ-trapezoidal rule which is built from an $M_1$-point extended trapezoidal rule and the $m$ DEIM point selections. The resulting integration error is given by the dash-dot black line in Fig.~\ref{fig:RungeInt}. Notice that the ROQ-trapezoidal error curve flattens out around $10^{-9}$. Beyond this value the $10,000$ point trapezoidal rule is unable to accurately integrate the Legendre polynomials of degree higher than $\sim 20$, thereby limiting the ROQ accuracy (see Theorem~\ref{thm:exact-integration}). 

Next we show how accuracy may be restored in this particular case. Notice that the factor $\bomega^T \widetilde{\bV}$ in Eq.~\eqref{eqn:ROQ_weights} for the ROQ weights is precisely the numerical integration of $m$ Legendre polynomials. Furthermore, we know that the first Legendre polynomial integrates to $\sqrt{2}$ while all others integrate to zero. Hence, in this particular case we may integrate them ``by hand" by setting $\bomega^T \widetilde{\bV} = [\sqrt{2},0,\dots,0]$, while continuing to generate interpolation points by way of the DEIM algorithm. The resulting ROQ rule, denoted in the figure as ROQ-equidistant, is then used to integrate Runge's function with an error given by the solid blue line in Fig.~\ref{fig:RungeInt}. This highlights the importance of accurate integration of the basis functions which, in a general setting where the basis vectors are selected by a greedy algorithm, should be carried out with a good underlying quadrature rule. 

Lastly, an ROQ rule is constructed from a $M_2=400$ point Gauss-Legendre rule (dashed red line of Fig.~\ref{fig:RungeInt}) and shows excellent agreement with the ROQ-equidistant case. Of particular noteworthiness, all three ROQ rules clearly show exponential convergence with the number of quadrature points and no apparent issues stemming from Runge's phenomena. Notice that the dashed red and solid blue lines are nearly indistinguishable, confirming the expectation that if the basis functions are sampled densely enough (a statement which certainly depends on $m$) and integrated accurately enough by the underlying quadrature rule, the resulting rules perform similarly.

\subsubsection{\bf Higher dimensional integrals} \label{subsec:ROQ_2D}

Let $d\ge 1$ be the space dimension and consider the integrals
\begin{align} \label{eq:Int2d}
I_1 = \int_\Omega \left[ \left( x - \mu_1 \right)^2 + 0.1^2 \right]^{-1/2} \ dx \, , \quad
I_2 = \int_\Omega \left[ \left( x - \mu_1 \right)^2 + \left(y - \mu_2 \right)^2 + 0.1^2 \right]^{-1/2} \ dx \ dy \, ,
\end{align}
for the following two cases: i) $d=1$, $\Omega = [-1,1]$ and $\mu_1 \in  [-.1,.1]$ and ii) $d=2$, $\Omega = [-1,1]^2$ and $\mu = (\mu_1,\mu_2) \in [-.1,.1]^2$. In the first case , numerical integration is carried out using an $M_1 \le 150$ point Gauss-Legendre rule and an ROQ rule built from the $150$-point GQ rule. For the second case, numerical integration is carried out using an $M_2 \le 150^2$ point tensor product Gauss-Legendre rule and an ROQ rule built from the $150^2$-point GQ rule. Basis vectors are selected by the greedy algorithm in both cases. From Fig.~\ref{fig:ROQ_2D} one can see that ROQ provide a factor of $\sim 4$ savings in the $1$-dimensional case for a maximum error below $10^{-4}$, while the savings in the $2$-dimensional case is greater than $\sim 12$. For many problems the expected ROQ savings will continue to increase when compared to quadrature rules arising from tensor product grids whose computational cost (also the cost incurred while building the basis) scales like $M^d$ for $d$ spatial dimensions. On the other hand, an ROQ nodal set is formed by scattered point distributions tailored to the problem. 

\begin{figure}[ht]
\centering
\includegraphics[width=0.5\linewidth]{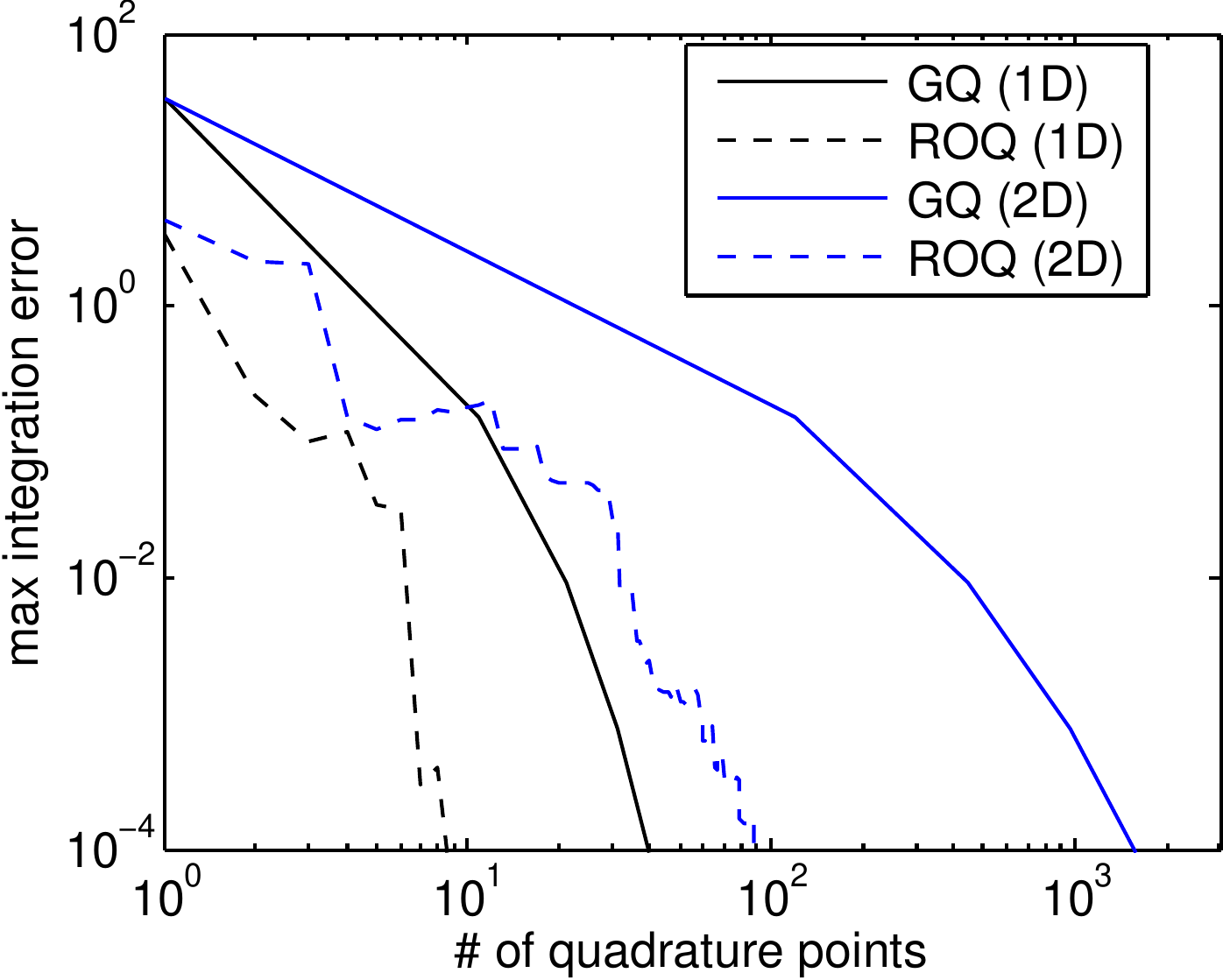}
\caption{
Error curves for the $1$-dimensional (case i) and $2$-dimensional (case ii) numerical integrals of Eq.~\eqref{eq:Int2d} using Gauss-Legendre and ROQ rules. Errors are computed by taking a maximum over the entire training set. The ROQ savings increase from $\sim 4$ to $\sim 12$ as the number of spatial dimensions is increased from one to two. As discussed in Sec.~\ref{subsec:ROQ_2D}, further savings are expected as the number of spatial dimensions increase.}
\label{fig:ROQ_2D}
\end{figure}

\subsection{\bf Gravitational waves} \label{sec:SPAWaveforms}

The inspiral and merger of neutron stars (NSs), black holes (BHs) or mixed pairs, known generically as compact binary coalescences, is believed to be one of the main sources of gravitational waves to be detected by the upcoming generation of earth-based observatories~\cite{Barish:1999vh,Waldmann:2006bm,Hild:2006bk,Acernese:2006bj,Abbott:2007kva,Abadie:2010cfa}. A passing wave distorts the length between any two points separated by a non-zero spatial distance. These distortions are measured by gravitational wave detectors as an effective strain $h_\mu (x)$, where $x$ denotes either time or frequency. In this problem the parameter $\mu$ depends on quantities such as the mass and spin of the compact objects, or detector orientation. In principle $h_{\mu}$ is obtained by numerically solving Einstein's equations, which are a set of parameterized quasilinear hyperbolic-elliptic equations \cite{Centrella:2010mx,Pretorius:2007nq,Baumgarte:2002jm,Sarbach:2012pr}. However, due to the cost of these simulations and the properties of the target system, simplified models are often used. 

Here we focus on testing our proposed ROQ on 
the post-Newtonian (PN) approximation \cite{Blanchet:LRR}, under which the gravitational waves in the leading order Stationary Phase Approximation (SPA) are given by \cite{Blanchet:1995fg,Will:1996zj,Allen:2005fk},
\begin{align} \label{eq:grav_wave_closed_form}
h_{\cM_c}(f) &=  \cA f^{-7/6}  \cdot \mbox{exp}\left(i  \left\{ - \frac{\pi}{4} + \frac{3}{128}\Big(\pi \cdot \frac{G}{c^3} \cdot f \cdot \cM_c \Big)^{-5/3}  \right\}  \right) ,
\end{align}
where $f$ denotes the frequency, $G$ Newton's gravitational constant \cite{Tu:2010zz}, $c$ the speed of light, and $\cA$ an overall amplitude that depends on quantities such as the distance to the compact binary coalescence. We will refer to Eq.~\eqref{eq:grav_wave_closed_form} as a gravitational {\em waveform}, two representative examples are shown in Fig.~\ref{fig:GW}. The single parameter $\mu$ in this model is the {\it chirp mass} $\cM_c:=(m_1 m_2)^{3/5} (m_1+m_2)^{-1/5}$~\cite{Allen:2005fk}, with $m_i$ the individual mass of each object. All quantities here use MKS units. 

\begin{figure}[ht]
\centering
\includegraphics[width=0.46\linewidth,height=.4\linewidth]{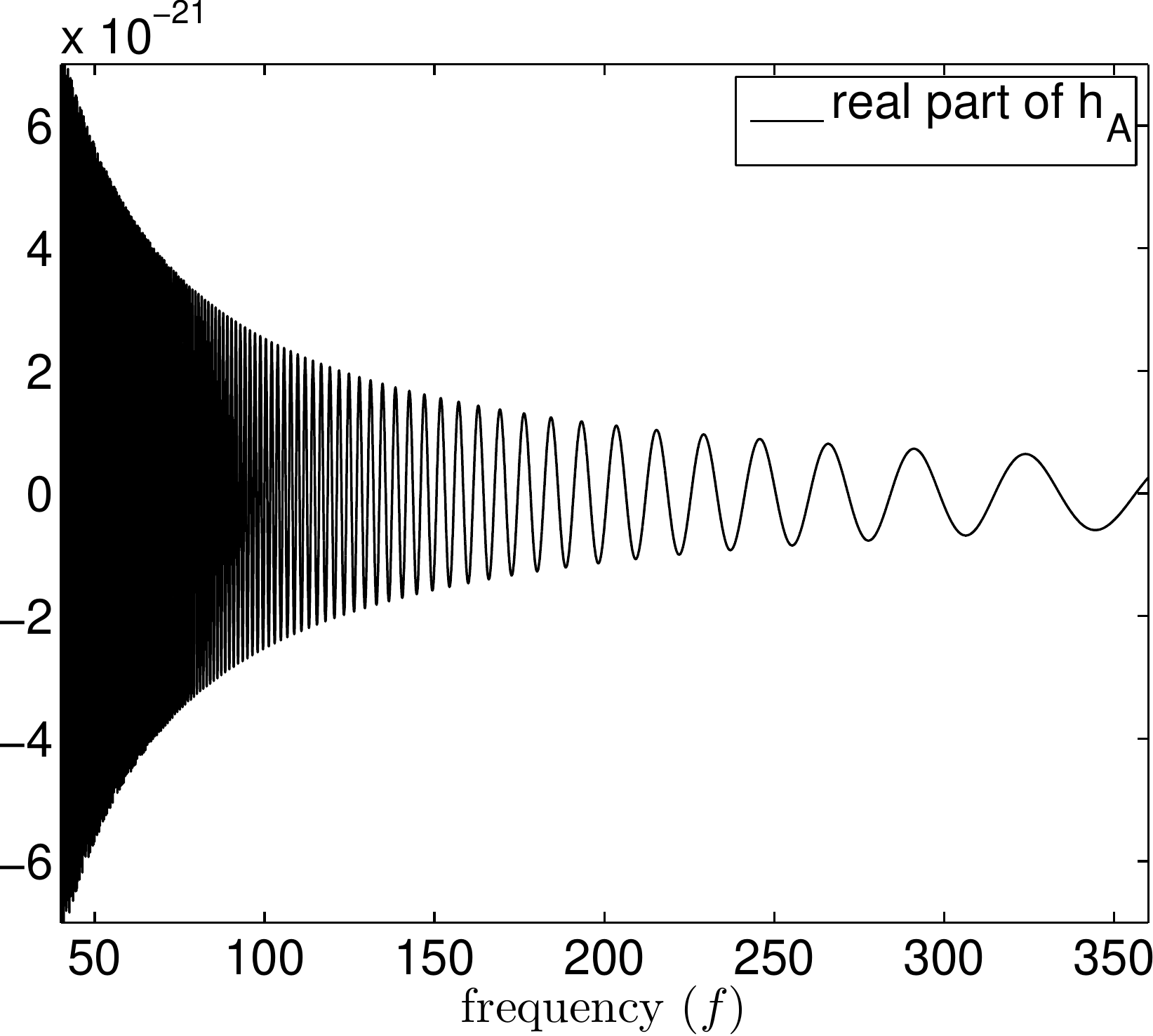} \qquad
\includegraphics[width=0.46\linewidth,height=.4\linewidth]{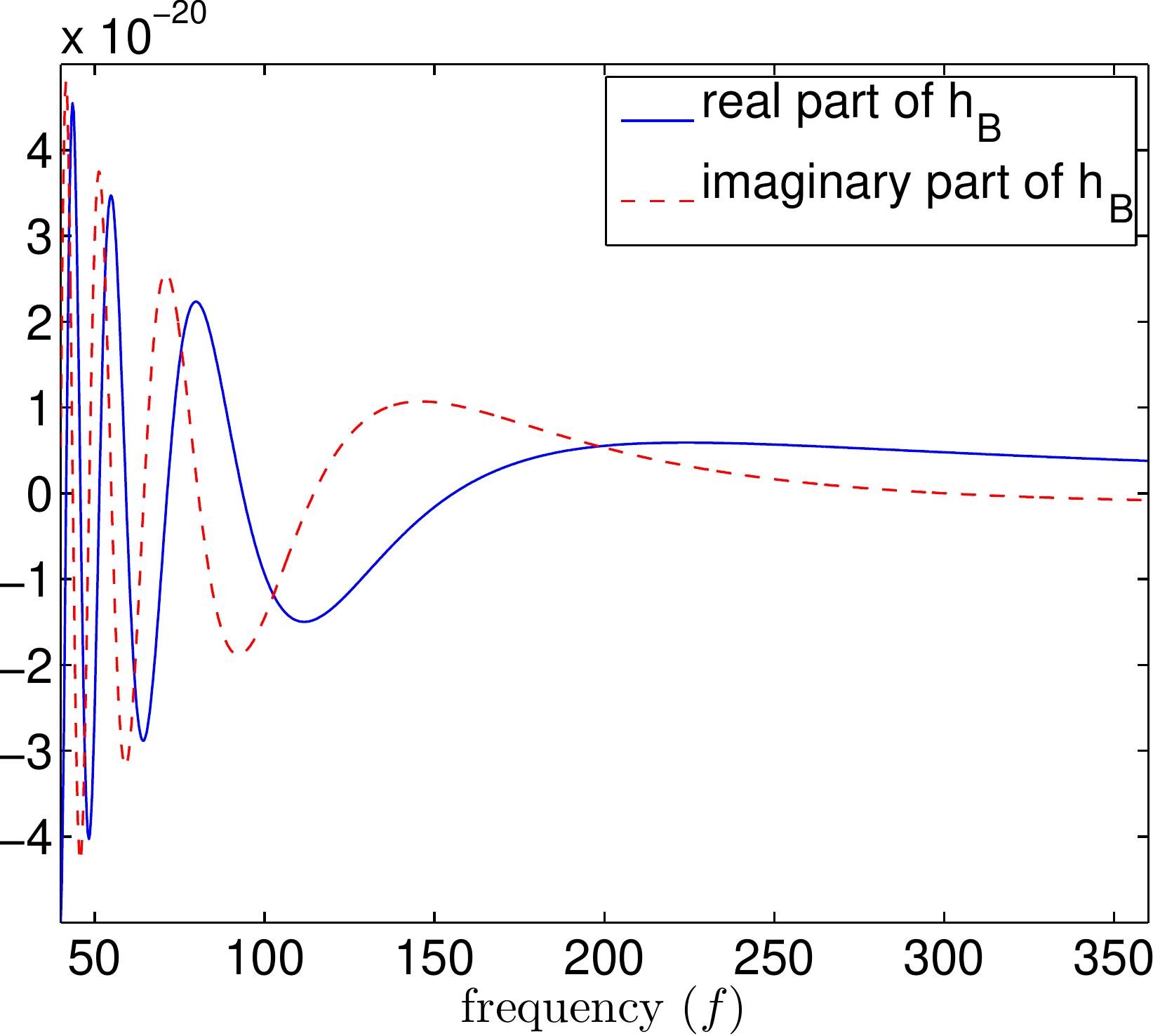}
\caption{Gravitational waveforms $h_A$ (left) and $h_B$ (right) correspond to the smallest ($A = 2.611651689888372 M_{\odot}$) and largest ($B=26.11651689888372 M_{\odot}$) chirp masses of the training set $\cT_K$ defined in \eqref{eqn:GW_training}. Waveforms are complex valued, but for clarity we do not show the imaginary part of $h_A$.}
\label{fig:GW}
\end{figure}

There are various reasons one might need to repeatedly compute inner products between waveforms. One setting where this occurs is in gravitational wave searches using matched filtering ~\cite{Owen_B:96,Owen99,2006CQGra..23.5477B,Allen:2005fk}. Given data $s$ recorded by a detector, a matched filtering search is carried out by computing all possible inner products of the data $s$ against waveforms drawn from a (usually large) catalog. It can be shown~\cite{Finn:1992wt,Maggiore} that the optimal matched filtering strategy requires one to compute inner products with a weight given by the reciprocal of the detector's power spectral density $S(f) = W^{-1}$. As a representative example we use $S(f)$ from the initial Laser Interferometer Gravitational Wave Observatory (LIGO)~\cite{Abbott:2007kv}, which can be modeled by the curve \cite{Ajith:2009fz}
\begin{align} \label{eq:PSD_LIGO}
S(y) = 9\times 10^{-46} \left[ \left(4.49 y \right)^{-56} + 0.16 y^{-4.52} + 0.52 + 0.32 \cdot y^2 \right], \quad y = \frac{f}{150 Hz} .
\end{align}
To ensure that no signals are missed and that the parameters are correctly estimated many inner products are needed for each segment of data~\cite{Field:2011mf,PhysRevD.79.022001,Brady:1998nj,Vallisneri:2011ts}. These significant computational costs motivate a need for faster ways of performing these integrals, often in real-time \cite{Kanner:2008zh,2010AAS...21540606S,Cannon:2011vi}.

In the following series of experiments we consider gravitational waves of the form given by Eq.~\eqref{eq:grav_wave_closed_form}, generated by inspiralling binary black hole (BBH) systems with mass components in the interval $m_i \in [3,30] M_{\odot}$, where $1 M_{\odot} = 1.98892 \times 10^{30} Kg$ is the astrophysical unit of mass known as a {\em solar mass}. Therefore the parameter domain under consideration is $ \cP = [A,B]$, where $A = 2.611651689888372 M_{\odot}$ and $B=26.11651689888372 M_{\odot}$. The physical domain frequency interval is given by $ \Omega = [40, 366.3383434841933] $\,Hz. Here the lower value is set by the Earth's seismic noise which decreases the detector's sensitivity. The upper bound is set by the limitations of the PN approximation which breaks down at frequencies corresponding to the {\it Innermost Stable Circular Orbit}. While this is a mass dependent frequency, for simplicity we choose the upper limit as the maximum over the mass range here considered.

Our primary goal in the forthcoming sections is to describe the construction and performance of an ROQ rule for computing inner products between gravitational waveforms given by Eq.~\eqref{eq:grav_wave_closed_form}. These steps are conveniently summarized in Algorithm \ref{algo:ROQ}. The first step, an approximation for $\widetilde{\cF} \approx \widetilde{F}_m$, can be carried out either by means of a direct greedy (Path \#1) or two-step greedy (Path \#2). Due to a {\em significantly} reduced offline cost we advocate the two-step greedy approach, which in turn requires an intermediate approximation $\cF \approx F_n$. Results for this intermediate approximation are considered in Section~\ref{sec:RB_DEIM_waveforms}. We contemporaneously remark on the performance of the DEIM algorithm when applied to single functions to facilitate a better understanding of this algorithm in a simpler setting. In Sections~\ref{sec:Greedy_comparison} and~\ref{sec:ROQ_for_waveforms} we complete the second half of the two-step greedy approximation, identify the interpolation points and finally generate an ROQ rule. In this section we also answer the fundamental question: {\it Do the direct and two-step greedy approaches give comparable results and is the former much more expensive than the latter?} As we will see, we have strong numerical evidence that, as predicted by our estimates, the two-step greedy gives enormous computational savings without sacrificing the accuracy or compactness of the reduced basis.

\subsubsection{\bf RB-greedy and DEIM for single functions} \label{sec:RB_DEIM_waveforms}

Here we consider experiments focused on approximations of the set of functions $\cF = \{ h_{\cM_c}(f) : \cM_c \in \cP \}$; with $h_{\cM_c}$ as in  Eq.~\eqref{eq:grav_wave_closed_form} and the range $\cP = [A,B]$ for the chirp mass $\cM_c$ --serving here as the parameter $\mu$-- as described above. We find the DEIM points and numerically confirm the DEIM error bounds from Sec.~\ref{sec:rates_ROQ}. This subsection is also precursor to Sections~\ref{sec:Greedy_comparison} and \ref{sec:ROQ_for_waveforms}, where ROQs are constructed for waveform inner products.

\smallskip
\noindent {\bf Reduced Basis:} 
We begin by building a reduced basis space $F_n$ approximating $\cF$ by using the RB-greedy approach. Since the number of cycles is proportional to an inverse power of $\cM_c$, we have found it advantageous to populate the training space with a logarithmic spacing between the samples, thereby clustering more points at low values of $\cM_c$. More precisely, a training set of size $K$ is here given by 
\begin{align} \label{eqn:GW_training}
\cT_K = \left\{ A\left(\frac{B}{A}\right)^\frac{i}{K-1} \ | \  i = 0,\dots,K-1 \right\} \, ,
\end{align}
and an associated training space of normalized waveforms 
\[
     \cF_{\mathrm{train}} =  \{ h_{\cM_c}(f) \ | \ \cM_c \in \cT_K \}. 
\]     
Through numerical experiments we have found that for this problem the number of basis for any given greedy error saturates with at most $K = 3,000$ training space elements, with -- for example -- $178$ RB elements $\{ e_i \}_{i=1}^{178}$ needed to achieve a tolerance\footnote{Our numerical experiments are carried out with double precision arithmetic. This translates into a double precision computation of the quantity $\left\|  h_{\mu} - \proF{n}{} h_\mu \right\|_{\tt d}^2$ found in step 3b of 
Algorithm~\ref{algo:RB-greedy} (RB-Greedy Algorithm), and hence an accuracy of about $10^{-7}$ in the computation of its square root. These observations motivate a choice for the greedy error tolerance to be $\epsilon = 10^{-6}$. Refs.~\cite{Casenave2012539,Canuto:2009:PEA:1654814.1654832} address this issue in greater detail and propose alternatives for improving error computations.} of $\epsilon^2 = 10^{-12}$. Therefore, the results shown below use $3,000$ training space samples. Fig.~\ref{fig:mchirpshist} shows the distribution of points selected by the greedy algorithm. They cluster at low $\cM_c$, which corresponds to more lower-frequency oscillations in Eq.~\eqref{eq:grav_wave_closed_form}. Such clustering is expected, since the number of cycles in a frequency range $f\in [f_{min},f_{max}]$ is given to lowest post-Newtonian approximation order by (graphically seen from Fig~\ref{fig:GW})
$$
{\cal N}_\mathrm{cycles} \left( \cM_c \right) = {\cal N}(f_{min};\cM_c)-{\cal N}(f_{max};\cM_c) \, , 
$$ 
where (see Eq.~(4.23), and Eq.~(5.247) from Ref.~\cite{Maggiore})
$$
{\cal N}(f;\cM_c)=1/(32\pi^{8/3})\left(G\cM_c/c^3\right)^{-5/3} f^{-5/3} \, . 
$$
The truly interesting aspect is that the distribution of greedy points closely matches the functional form of ${\cal N}_\mathrm{cycles} \left( \cM_c \right)$.

The solid black lines labeled ``projection error" in both plots of Figure \ref{fig:EIMInt} show the square of the greedy error $\greedy{n}{M}(\cF_{\mathrm{train}};\cH)$, defined in Eq.~\eqref{eq:RB-error}, over the training space $\cF_{\mathrm{train}}$, as a function of the number of RB elements. After a slow decay, the error decays with a very fast exponential falloff, a feature that we have found in this family of inspiral gravitational waveforms and generalizations thereof 
\cite{Field:2011mf,PhysRevD.86.084046}. 

\begin{figure}[htp]
\centering
\subfigure[Chirp Mass $\cM_c$]{
  \includegraphics[width=0.48\linewidth]{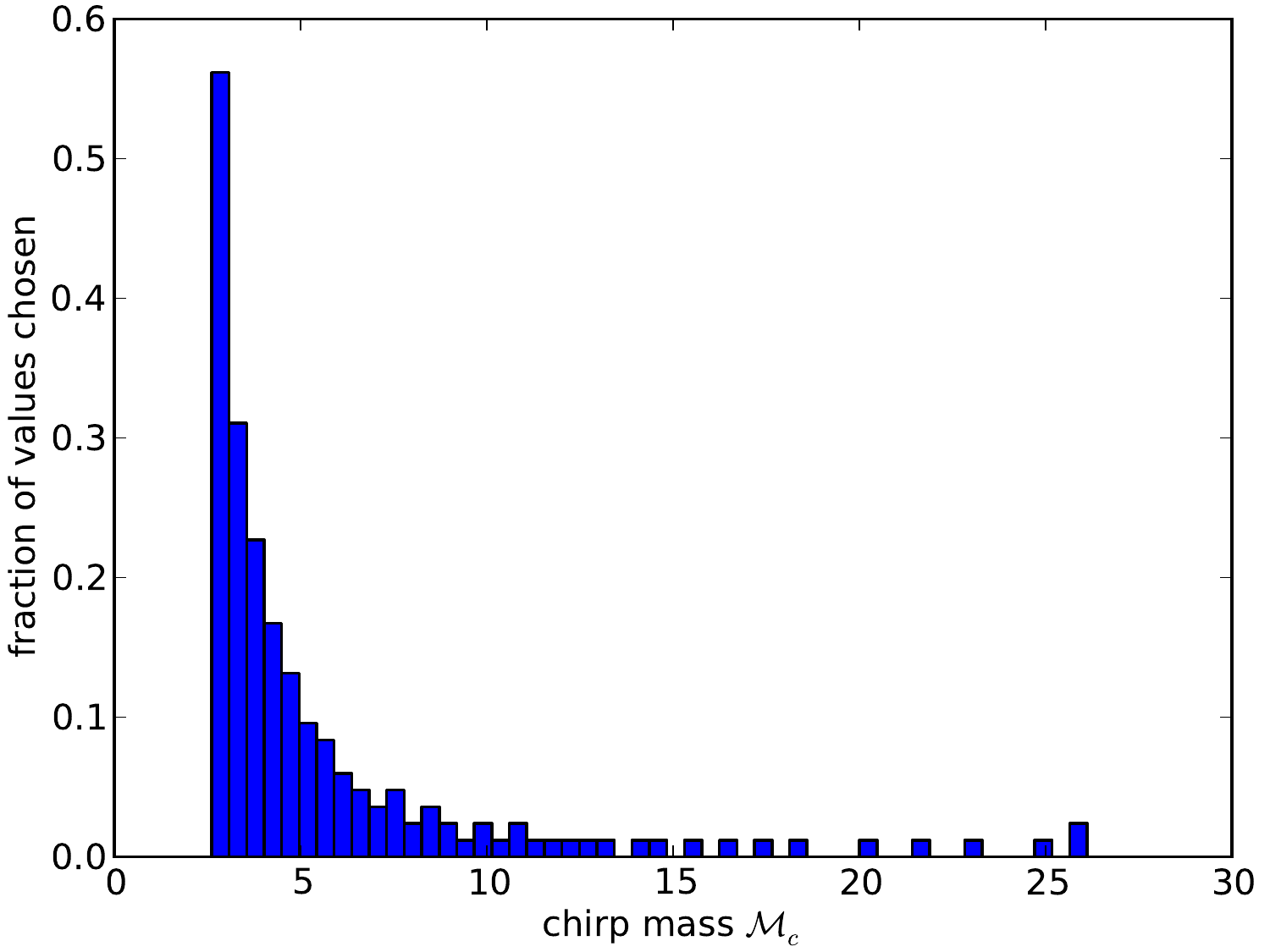}
  \label{fig:mchirpshist}}
\subfigure[Frequency $f$]{
  \includegraphics[width=0.48\linewidth]{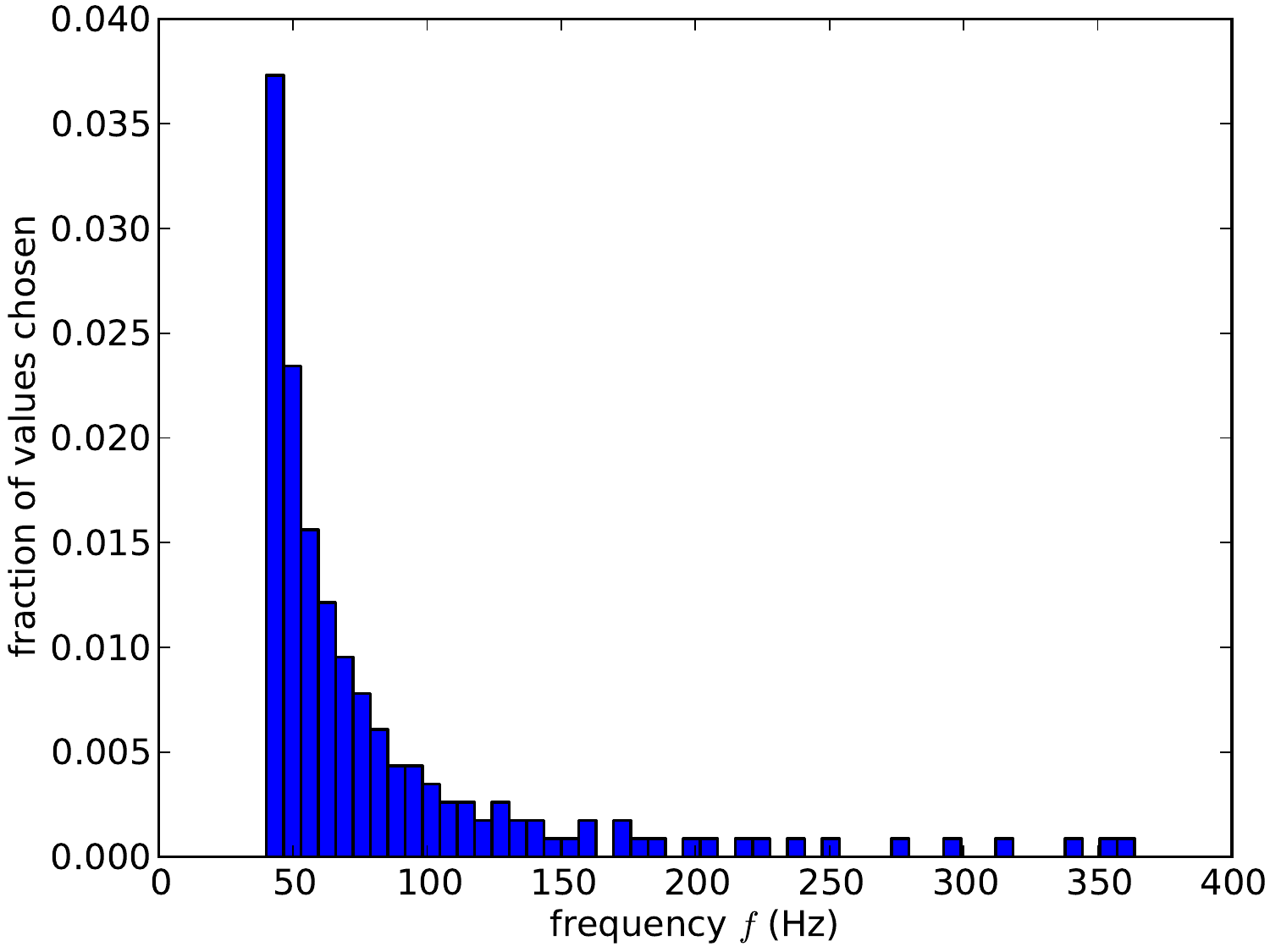}
  \label{fig:frqshist}}
  \caption{
  The left figure~\subref{fig:mchirpshist} shows the parameters $\cM_c$ selected by greedy Algorithm~\ref{algo:RB-greedy}. The right figure~\subref{fig:frqshist} shows the frequency points selected by DEIM Algorithm~\ref{algo:DEIM} out of $20,000$ equidistant frequency sample points.
We see from the histograms that both the chirp masses selected and the frequency interpolating points cluster at small values. This is intuitively expected, because smaller chirp masses correspond to a larger number of waveform cycles in Eq.~\eqref{eq:grav_wave_closed_form}. 
}
\label{fig:mchirpfrqhist}
\end{figure}

\smallskip
\noindent {\bf Empirical Interpolation:} 
Since the reduced basis set $\{ e_i \}_{i=1}^{178}$ generated above is application-based (as opposed to, say, Jacobi polynomials), an appropriate set of interpolation points is {\em in principle} not known, and that is where the DEIM algorithm of Section~\ref{sec:EIM} enters the game. We now generate a hierarchical set of DEIM points for the reduced basis built (these results are not used in the construction of ROQ for inner products), choosing the maximum number of interpolating points which equals the dimension of the RB space. That is, using $n=1,2,3, \ldots ,178$ reduced basis elements, we sequentially computed the equivalent number of DEIM points.  We recall that the RB-greedy approach selects points in parameter space, in this case the chirp mass, while the DEIM generates interpolating points in physical space (here frequency), and that both methods are hierarchical. That is, a seed choice for the first parameter value defines the first RB element, and from it the first interpolating point can be computed. Next, the greedy method chooses a second parameter value and the second RB element. By applying the DEIM to this new set (of so far two basis elements), the second interpolating point can be computed. And so on. Equivalently (the output is exactly the same), one can generate the whole RB first, up to the desired representation error tolerance, and then generate all of the interpolation points. Put differently, if there are $n$ RB elements, up to $n$ DEIM points can be generated, and generating some $n_1 < n$ DEIM only requires the first $n_1$ RB elements. This discussion, though perhaps trivial, might be helpful to keep in mind when later discussing the results from Fig.~\ref{fig:EIMInt}. 

As input to the DEIM algorithm we must provide the RB functions sampled on some set of frequency points. Two cases are here considered: i) the basis vectors are sampled at $1,701$ Gauss-Legendre points, which was the integration rule used to build the reduced basis in Subsection~\ref{sec:RB_DEIM_waveforms}, ii) the basis vectors are sampled at $20,000$ equidistant points \footnote{See the discussion in Section \ref{sec:ROQ_newPoints} related to the subtleties involved when sampling the RB functions at a set different from those used to compute the underlying quadratures to build the RB itself.}. Figure~\ref{fig:frqshist} depicts the frequency distribution of selected points for the equidistant sampling case, which is of particular practical importance when one seeks to downsample experimental data. The overall structure for the Gauss-Legendre points case was found to be essentially identical, and is therefore not shown. 

Next, we randomly pick $10,000$ waveforms, not necessarily in the training space, and represent each of them as a DEIM interpolant (that is, using Eq.~\eqref{eq:eim_disc}). Both sampling at Gauss-Legendre and equidistant points are considered. For each waveform the DEIM error is calculated  
as $\| h_{\mu} - {\cI}_n[h_{\mu}] \|^2_{\tt d}$. Fig.~\ref{fig:EIMInt} compares the largest DEIM error over all $10,000$ waveforms (solid blue line) with the greedy error (solid black line). Notice that the black line lies strictly below the blue line; for waveforms in the training space this is guaranteed by the optimality of the $L^2$ orthogonal projection $\proF{n}{} h_\mu$. In turn, the DEIM interpolation error is bounded, again as expected, from above by the a-priori theoretical error estimates given in by Eq.~\eqref{eq:deim-error-L2} (red line) and Eq.~\eqref{eq:deim-error-L2_2} (purple line). 
In principle these two error estimates strictly hold only for waveforms in the training space, but our numerical results show that
they evidently continue to hold for waveforms between elements of the training space when the latter is sufficiently dense. The red line provides a sharper bound on the error but it is also more expensive to calculate (cf.~Remark \ref{remark:DEIM_bounds}). Finally, notice that, at least for elements in the training space, the asymptotic convergence rate of the DEIM is predicted to be bounded by the RB-greedy representation error (with the Lebesgue constant as a proportionality constant), see Corollary \ref{cor:deim-rb-estimate} . From the figure we can see that the similar asymptotic convergence rates for the DEIM and reduced basis representation continue to hold for waveforms not necessarily in  the training space. 

\begin{figure}[ht]
\includegraphics[width=0.5\linewidth]{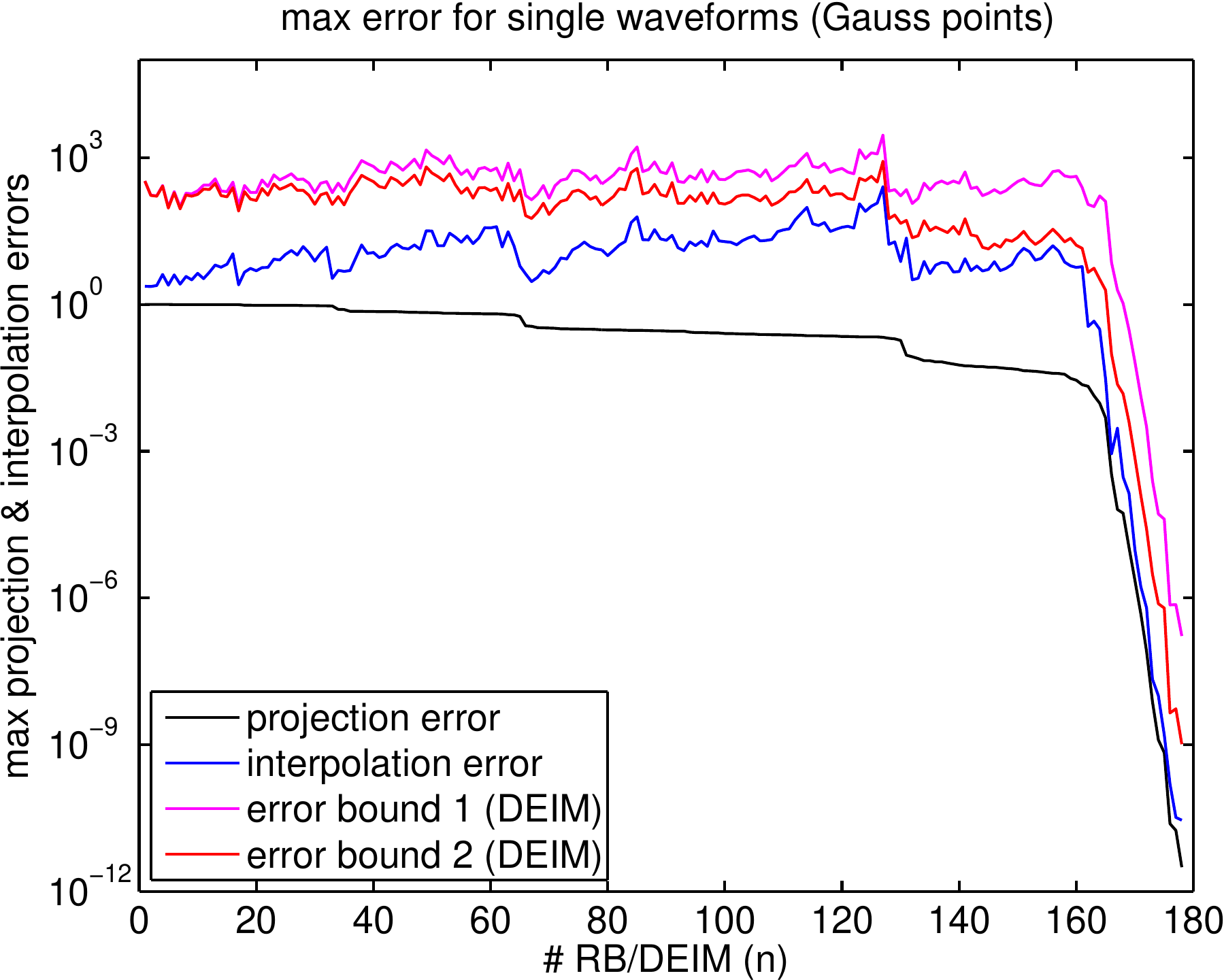}
\includegraphics[width=0.5\linewidth]{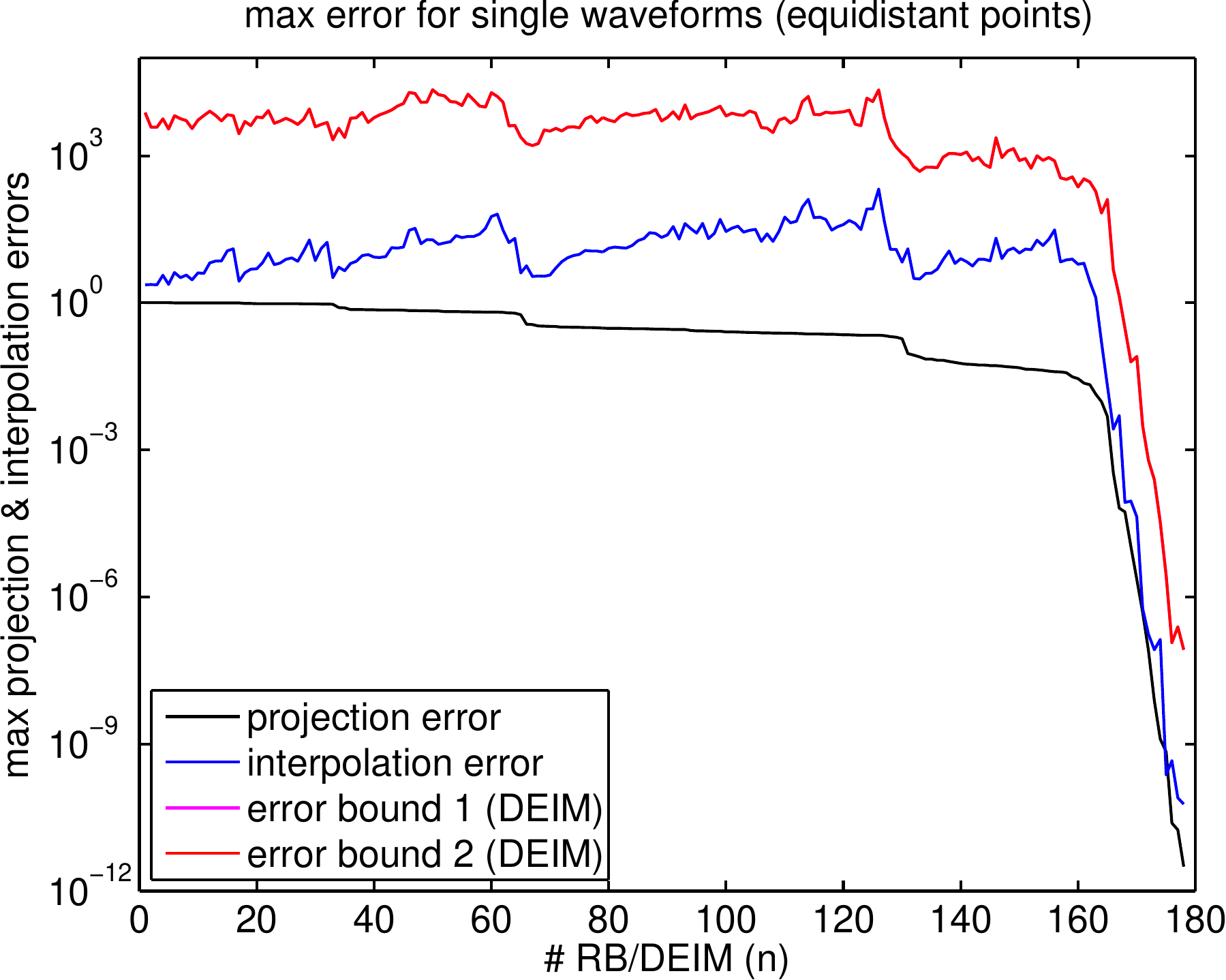}
\caption{Maximum projection and interpolation errors, as well as the predicted error bounds, for single waveforms as a function of the number of reduced basis (equal to the number of DEIM points) used. In both figures ``projection error" plots the square of the greedy error $\greedy{n}{M}(F_\mathrm{train};\cH)$ and ``interpolation error" plots a maximum error $\| h_{\cM_c} - \cI_n\sbr{h_{\cM_c}}\|_{\tt d}^2$ taken over $10,000$ randomly drawn values of $\cM_c$ not necessarily in the training space. The DEIM interpolant error curves found in the left (right) figures correspond to the Gauss-Legendre (equidistant) cases described in Sec.~\ref{sec:RB_DEIM_waveforms}. Error bounds given by Eq.~\eqref{eq:deim-error-L2_2} (error bound 1, magenta) and Eq.~\eqref{eq:deim-error-L2} (error bound 2, red) are identical up to numerical accuracy when equidistant samples are used (cf.~remark \ref{remark:DEIM_bounds}). For both cases the maximum interpolation error is nearly identical, as expected.}
\label{fig:EIMInt}
\end{figure}

\subsubsection{\bf Direct and two-step greedy: results and comparison} \label{sec:Greedy_comparison}

\begin{figure}[htp]
\includegraphics[width=0.5\linewidth]{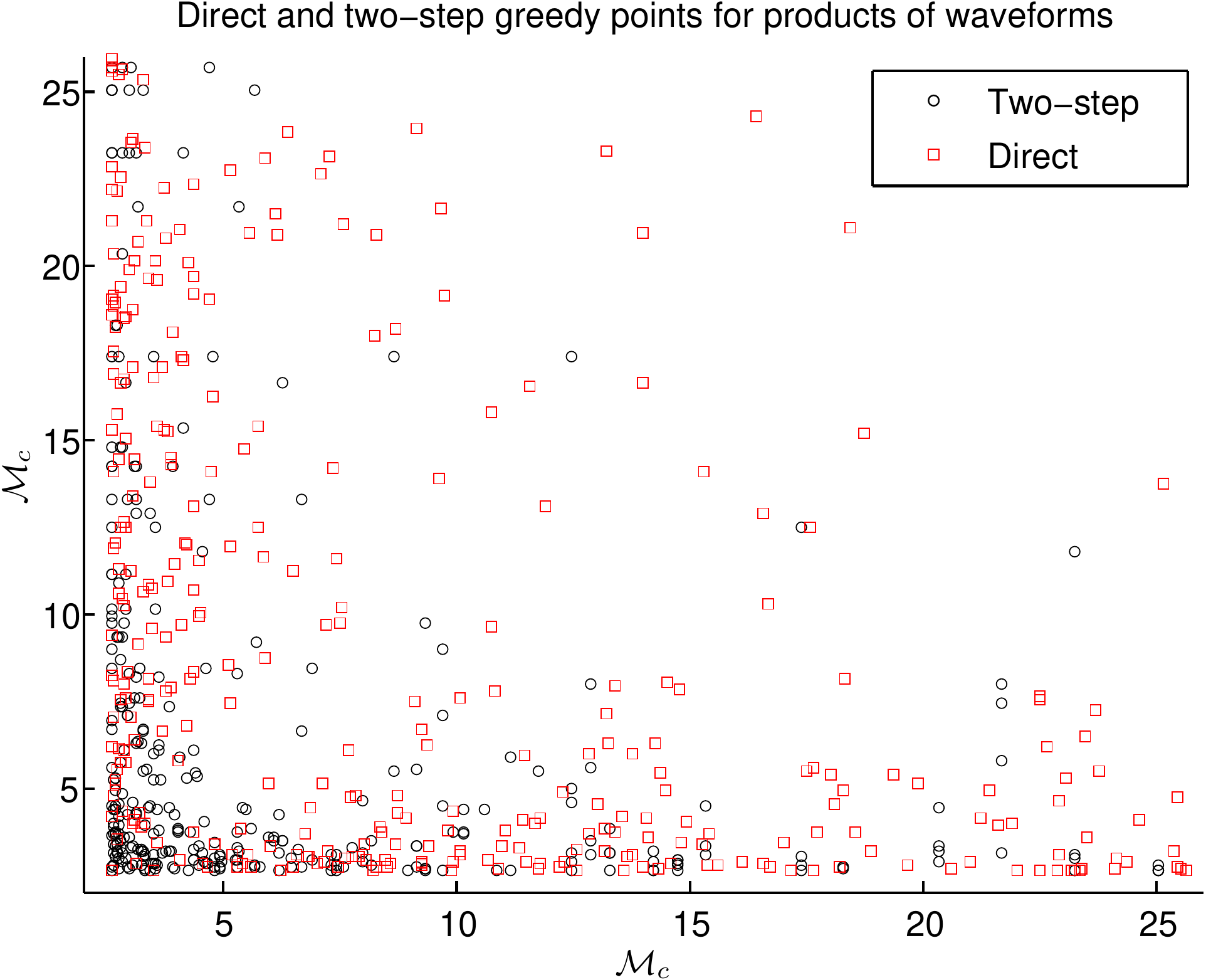}
\includegraphics[width=0.5\linewidth]{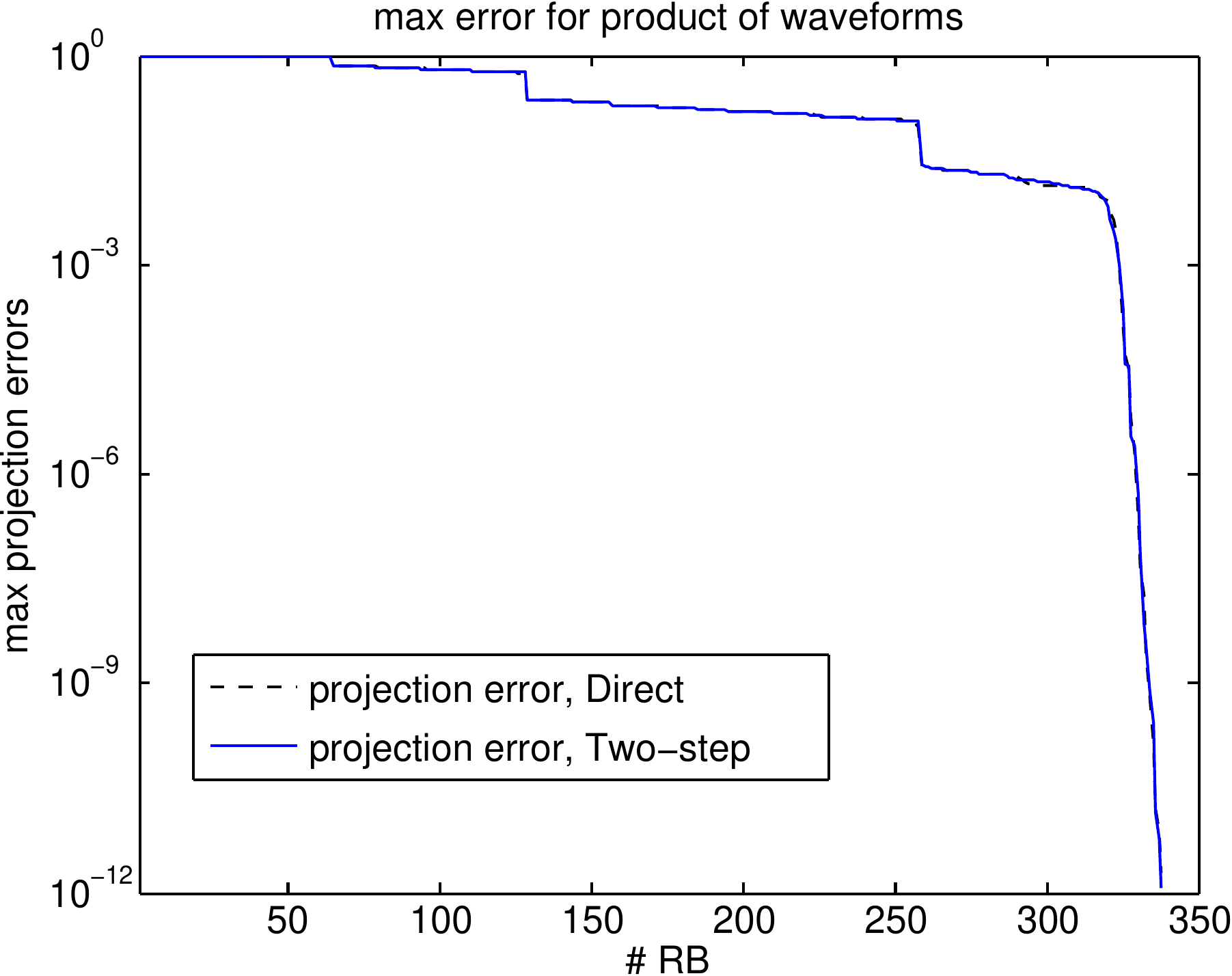}
\caption{Selected chirp mass $\cM_c$ values (left) and square of the greedy error (right) for both the direct and two-step greedy approaches. Each approach seeks to compress a training space which consists of normalized products of waveforms. However, these training spaces are constructed in very different ways. The two-step training space is built from a Cartesian product of greedy points identified while building an RB space for waveforms (as opposed to their products) and thus has $178^2$ members. A direct greedy uses a training space from the Cartesian product of the $\cT_K$ defined in Eq.~\eqref{eqn:GW_training} and has $3000^2$ members. Remarkably, despite these differences, we find that {\em exactly} $339$ basis vectors are needed to achieve an error tolerance of $\epsilon^2 = 10^{-12}$ for both the two-step and direct greedy. The left figure shows the point distribution is slightly different with more clustering at lower masses for the two-step process (black circles) as compared with the direct one (red squares).}
\label{fig:greedycompare}
\end{figure}

In the previous subsection we constructed an approximation space $F_n \approx \cF$. It was found that 178 basis elements are needed to represent any member of a 3000 member training set given by \eqref{eqn:GW_training} with an accuracy better than $\epsilon^2 = 10^{-12}$. 
We continue following Algorithm~\eqref{algo:ROQ} with an aim towards approximating $\widetilde\cF$. Results for both the two-step and direct greedy algorithms are given, followed by a short discussion.

\smallskip
\noindent
{\bf Two-step greedy results:} 
As discussed in Sec.~\ref{sec:Products} a second RB-greedy is used to build a space $\widetilde{F}_m$ which approximates $\widetilde\cF$. Following the prescription outlined there, we begin by building a training set 
$\cT_{n}^2 = \{ ( \mu_i , \mu_j ) \}_{i,j=1}^n$ and a training space $F_{n^2}$ of associated normalized products $\{ h_{\mu_i}^*h_{\mu_j} / \| h_{\mu_i}^*h_{\mu_j} \|_{\tt d} \}_{i,j=1}^n$, where $\{\mu_i \}_{i=1}^n$ are the greedy points identified by the first greedy approximation ${\cal F} \approx F_n$ carried out in Sec.~\ref{sec:RB_DEIM_waveforms}. Our definition of $F_{n^2}$ is not the only choice and, in particular, one may build a training space from normalized products of the orthogonalized basis $\{ e_i \}_{i=1}^{178}$  (cf. Remark~\ref{remark:TwoStepChoices}). 

Through numerical experiments we have found that $339$ reduced basis elements are needed to achieve a tolerance of $\epsilon^2 = 10^{-12}$ for approximating  $F_{n^2}$. Fig.~\ref{fig:greedycompare} (left) shows the distribution of points selected by the greedy algorithm (the two-step greedy results are denoted by black circles). As expected they cluster towards lower values of $\cM_c$ (see Sec.~\ref{sec:RB_DEIM_waveforms}). The solid blue line labeled ``projection error, Two-step" in Fig. \ref{fig:greedycompare} (right) plots the square of the greedy error $\greedy{m}{M}(F_{n^2};\cH)$, defined in Eq.~\eqref{eq:RB-error}, over the training space of $F_{n^2}$, as a function of the number of  reduced basis elements. Furthermore, through Monte Carlo sampling of the continuum we find any waveform to be accurately represented (see ``interpolation error" of Fig.~\ref{fig:EIMInt_Prod}, right) by these same 339 basis elements.

\smallskip
\noindent
{\bf Direct greedy results:} 
One may consider direct approximation of $\widetilde\cF$ through a single RB-greedy (Path \# 1 in Algorithm~\ref{algo:ROQ}). For most problems this will be prohibitively expensive even as an offline computation. Nevertheless, we provide details on it here mainly for comparison with the two-step approach (Path \# 2 in Algorithm~\ref{algo:ROQ}). We take our training set to be a Cartesian product $\cT_{K}^2 = \{ ( \mu_i , \mu_i ) \}_{i,j=1}^K$ of the $K=3000$ element training set given by \eqref{eqn:GW_training} and a training space $F_{K^2}$ to be the associated normalized products $h_{\mu_i}^*h_{\mu_j} / \| h_{\mu_i}^*h_{\mu_j} \|_{\tt d}$. To achieve an approximation tolerance of $\epsilon^2 = 10^{-12}$ we have found that $339$ RB elements were needed. Fig.~\ref{fig:greedycompare} (left) shows the distribution of points selected by the greedy algorithm (the direct greedy results are denoted by red squares). The dashed black line labeled ``projection error, Direct" in Figure \ref{fig:greedycompare} (right) shows the square of the greedy error $\greedy{m}{M}(\cF_{K^2};\cH)$, defined in Eq.~\eqref{eq:RB-error}, over the training space $\cF_{K^2}$, as a function of the number of  RB elements. 

\smallskip
\noindent
{\bf Discussion and comparison of two-step and direct greedy:} 
We first notice that when building $F_{n^2}$ from products of basis waveforms 
(as opposed to products of orthonormal basis vectors (cf. Remark~\ref{remark:TwoStepChoices})) 
one has $F_{n^2} \subset F_{K^2}$. In light of this observation one may view the two-step greedy as a smarter choice of sampling $\widetilde\cF$. Furthermore, to better assist with our comparison, in the numerical experiments carried out above we have initialized both the two-step and direct greedy algorithms with the same seed.

Remarkably, despite the differences in the two approaches in terms of computational cost, we find that the greedy error curves are nearly identical and in both cases {\em exactly} 339 basis vectors are needed to achieve an error tolerance of $\epsilon^2 = 10^{-12}$ (the greedy errors are defined with respect to different training spaces, however). In both cases after a slowly decaying region we observe very fast exponential convergence of the form $C e^{-c_0 n^\alpha}$. For the two-step greedy, $\greedy{m}{M}(F_{n^2};\cH)$ can be fitted by 
\[
    C=4.19 \times 10^{-3}, c_0=0.981, \mbox{ and } \alpha=0.923,
\]
while for the direct one $\greedy{\widehat{m}}{M}(\cF_{K^2};\cH)$ can be fitted by 
\[
    \widehat{C}=3.98 \times 10^{-3}, \widehat{c}_0=1.07 \mbox{ and } \widehat{\alpha}=0.875. 
\]

The number $m$ of basis found to be needed so that $\widetilde{F}_m$ represents $\widetilde\cF$ within machine precision, $m=339$,  is remarkably close to twice that one needed so that $F_n$ represents $\cF$ with the same accuracy, $n=178$. If we were dealing with polynomials, it would be exactly $m=2n$. 
From a practical perspective, with the two-step greedy approach we remove significant redundancy amongst elements of $F_{n^2}$ where the dimensionality of the space to represent products of waveforms is compressed from $n^2$ to $\sim 2n$, with a compression ratio of about $90$ for this problem.

Notice the savings in the offline stage when building the reduced basis space $\widetilde{F}_m$ using our two-step greedy approach, compared to a direct one. For the problem here considered, which is not particularly large in terms of the number of physical parameters (typically an $8$ dimensional parameter space for a faithful description of compact binary coalescences \cite{PhysRevD.86.084046}), we needed $3,000$ training space points to build $F_n$. Using a direct approach to build $\widetilde{F}_m$ one needs $9 \times 10^6$ training space points, compared to the $178^2$ needed in the two-step greedy, {\em with offline savings when building  $\widetilde{F}_m$ of $\sim 284$}. At the end of the day, both basis are able to accurately represent any waveform product and hence perfectly well suited for an ROQ construction. Clearly the two-step is preferable when considering these costs; we refer to Remark~\ref{rmk:flop-count}.

\begin{remark}
The savings for larger problems, for example when using a lower cutoff frequency of $10$Hz, as estimated for the upcoming generation of earth-based detectors, or including spins in the modeling of each compact objects would be considerably larger (even in the absence of precession). For such cases we have typically used $\sim 10^6$ elements in the training space in order to build a high accuracy RB, with less than $\sim 2,000$ RB elements needed to represent 
$F_n$~\cite{Caudill:2011kv,Field:2011mf,PhysRevD.86.084046}. For those cases a direct RB-greedy construction of $\widetilde{F}_m$  would in principle require $\sim 10^{12}$ training space elements -- {\em our two-step greedy would then save the offline cost by a factor of $\sim 10^5$}. 
\end{remark}

\subsubsection{\bf DEIM and ROQ for overlap/inner-product integration} \label{sec:ROQ_for_waveforms}

In the previous subsection we constructed the approximation space $\widetilde{F}_m \approx \widetilde\cF$ using a direct and two-step greedy algorithms. While both approaches identify an accurate and compact reduced basis set $\{ \widetilde{e}_i \}_{i=1}^{339}$ approximating $\widetilde{\cF}$, the two-step one has a much smaller computational cost (see Remark~\ref{rmk:flop-count}). We now continue with Algorithm~\eqref{algo:ROQ}. The results shown are for a reduced basis generated from the two-step greedy.

\smallskip
\noindent
{\bf Discrete Empirical Interpolation:} 
We now generate the corresponding set of interpolation points using the DEIM algorithm of Section~\ref{sec:EIM}. Results for Gauss-Legendre sampling (case 1) are shown in Figure~\ref{fig:EIMInt_Prod}, equidistant sampling is qualitatively similar. A distribution of selected DEIM points is depicted in the leftmost plot. Notice that the overall structure is very similar to the single waveform case shown in Figure~\ref{fig:frqshist}.

For $20,000$ randomly sampled normalized waveform products $g \in \widetilde{\cF}$, not necessarily in the training space, the DEIM interpolant is evaluated using Eq.~\eqref{eq:eim_disc}. The DEIM interpolation error is calculated 
as $\| g - \widetilde{\cI}_m[g] \|^2_{\tt d}$. Fig.~\ref{fig:EIMInt_Prod} (right) compares the largest DEIM interpolation error over all $20,000$ products (solid blue line) with the square of the greedy error $\sigma_m\del{F_{n^2};\cH}$ (solid black line). Notice that the DEIM interpolation error is bounded from above by the a-priori error estimates Eq.~\eqref{eq:deim-error-L2} (red line) and Eq.~\eqref{eq:deim-error-L2_2} (purple line). These two error estimates strictly hold for waveforms in the training space, and evidently continue to hold for waveforms outside of the training space when the latter is sufficiently dense. The red line provides a sharper bound on the error, but its also more expensive to compute (cf.~Remark \ref{remark:DEIM_bounds}).

\begin{figure}[ht]
\includegraphics[width=0.5\linewidth]{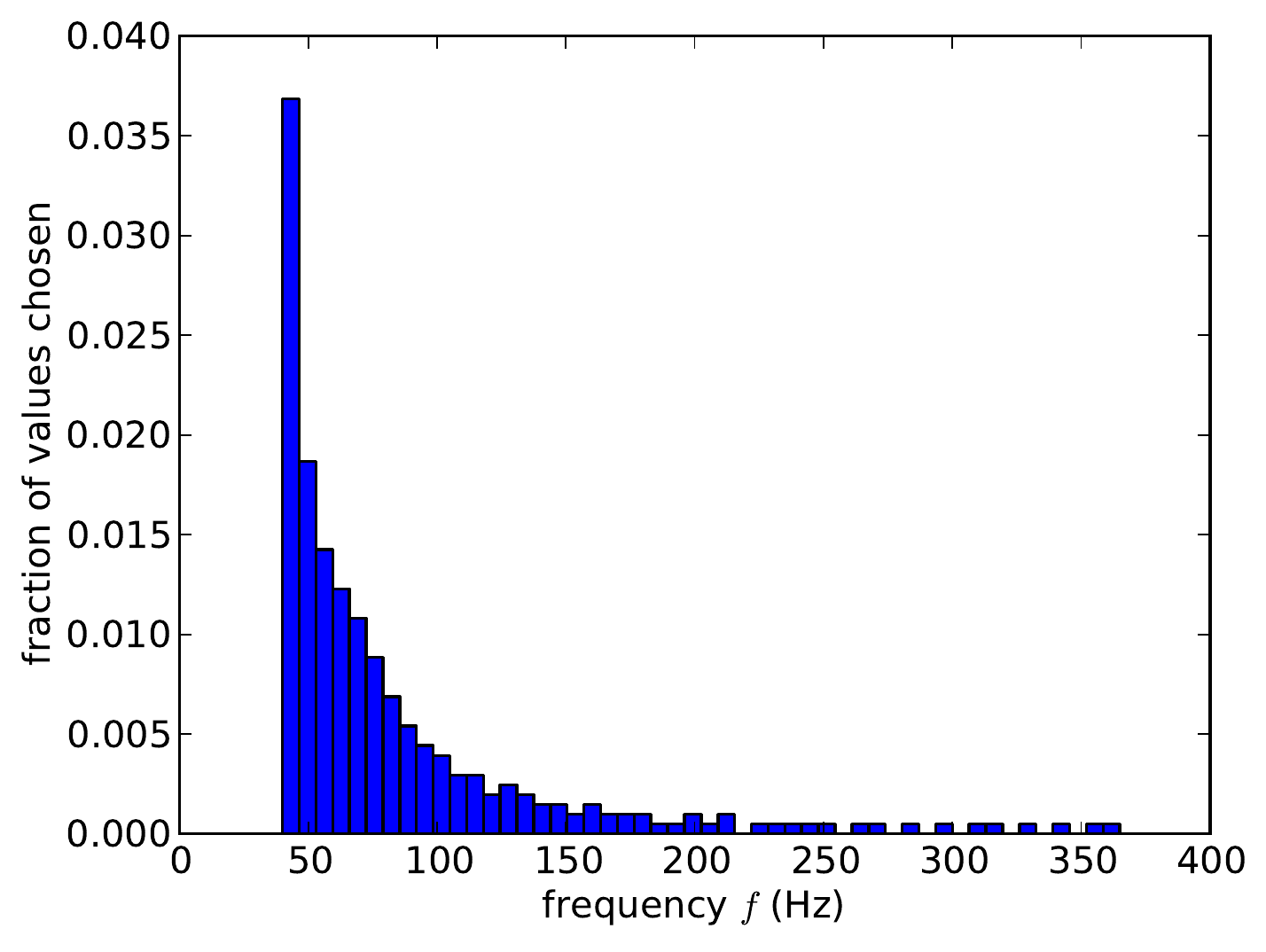}
\includegraphics[width=0.5\linewidth]{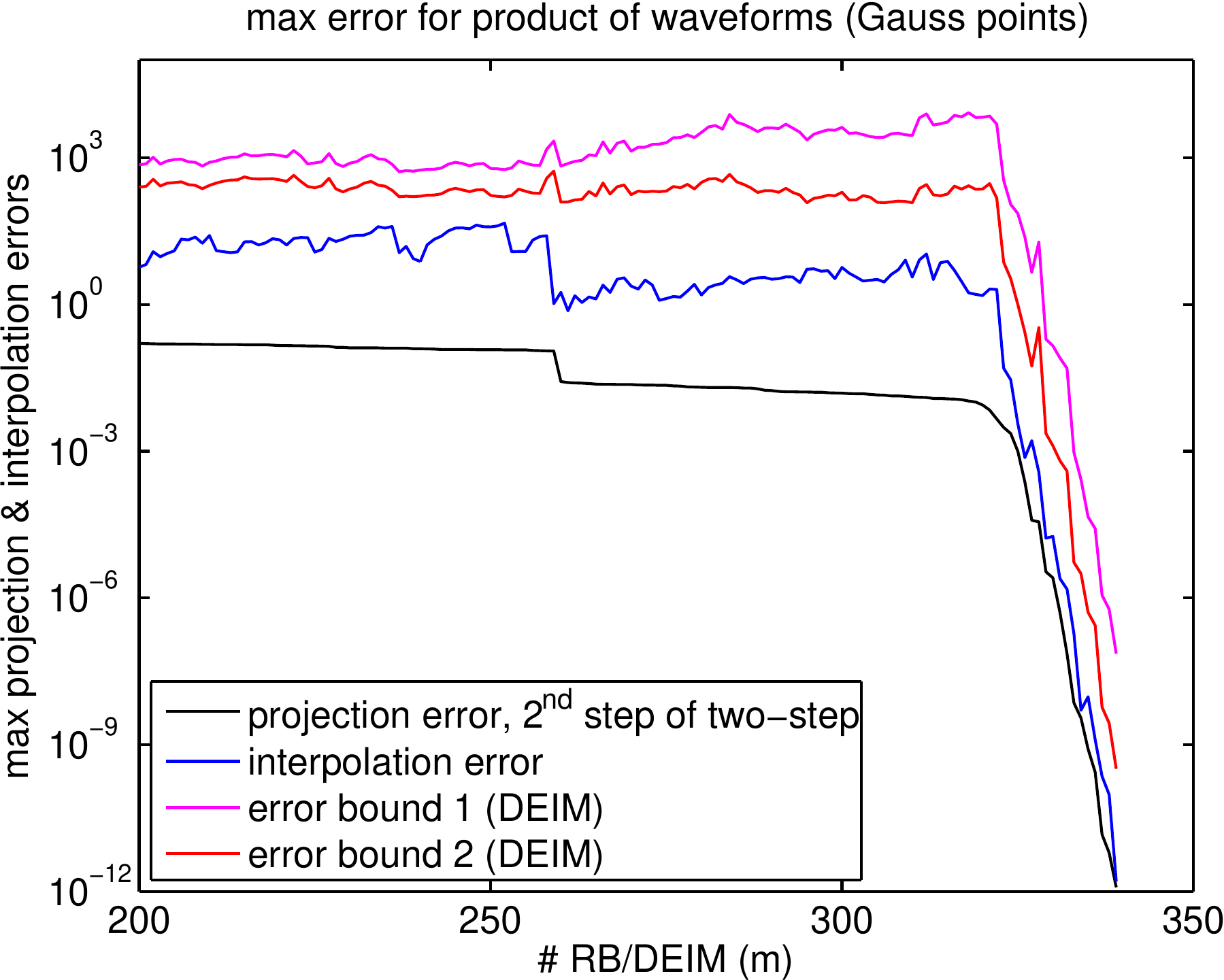}
\caption{The left figure shows the distribution of empirical interpolation points selected by the DEIM algorithm with the basis vectors sampled at $1,701$ Gauss-Legendre points. The right figure shows maximum projection and interpolation error (and error bounds) for products of waveforms as a function of the number of reduced basis used; notice that the profile is similar to that one of Fig.~\ref{fig:EIMInt}. On the right figure ``projection error" plots the square of the greedy error $\greedy{m}{M}(F_{n^2};\cH)$ and ``interpolation error" plots a maximum error $\|g - \widetilde\cI_m\sbr{g}\|^2_{\tt d}$ taken over $20,000$ randomly drawn normalized 
products $g \in \widetilde{\cF}$.
A-priori interpolation error bounds given by Eq.~\eqref{eq:deim-error-L2} (error bound 2) and Eq.~\eqref{eq:deim-error-L2_2} (error bound 1) are also shown. Results for equidistant sampling are qualitatively similar to the Gauss-Legendre case.}
\label{fig:EIMInt_Prod}
\end{figure}

\smallskip
\noindent
{\bf Reduced Order Quadratures:} 
Having assembled a set of basis vectors and empirical interpolation points for products, we compute the reduced order quadrature weights $\omega_i^\mathrm{ROQ}$ with Eq.~\eqref{eqn:ROQ_weights} to complete the ROQ rule. Like the previous experiments of this section, we randomly sample $20,000$ inner products (with normalized integrands) to compute, and monitor the maximum error from this computation. 

Figure~\ref{fig:RBO_Quad} compares accuracy versus computational degrees of freedom (number of quadrature nodes) for a variety of integration schemes. The black and blue lines denote trapezoidal and Gauss-Legendre quadratures, respectively. Two cases are considered within reduced order quadratures. The ROQ leading to the red line stems from a reduced basis space $\{ \widetilde{e}_i \}_{i=1}^{339}$ and a $1,701$ point Gauss-Legendre quadrature rule. The magenta one, in turn, uses a $20,000$ point trapezoidal rule. One can see that ROQ have a factor of $\sim 2$ of savings when compared to Gauss-Legendre points for a maximum error below $10^{-2}-10^{-1}$, while the savings compared to the extended trapezoidal rule at high resolutions are greater than $50$. The Gauss-Legendre comparison provides a benchmark test against the best quadrature for smooth functions and already provides benefits for applications that may take days or weeks to run (e.g. parameter estimation studies with Markov chain Monte Carlo). The more relevant comparison, however, for data driven applications is with the trapezoidal rule. Real data (e.g.~measurements taken at GW observatories or, more broadly, signal detection) will not be given at Gauss-Legendre points but rather equally spaced ones and in this setting ROQ significantly outperforms its counterpart.

\begin{figure}[ht]
\centering
\includegraphics[width=0.49\linewidth]{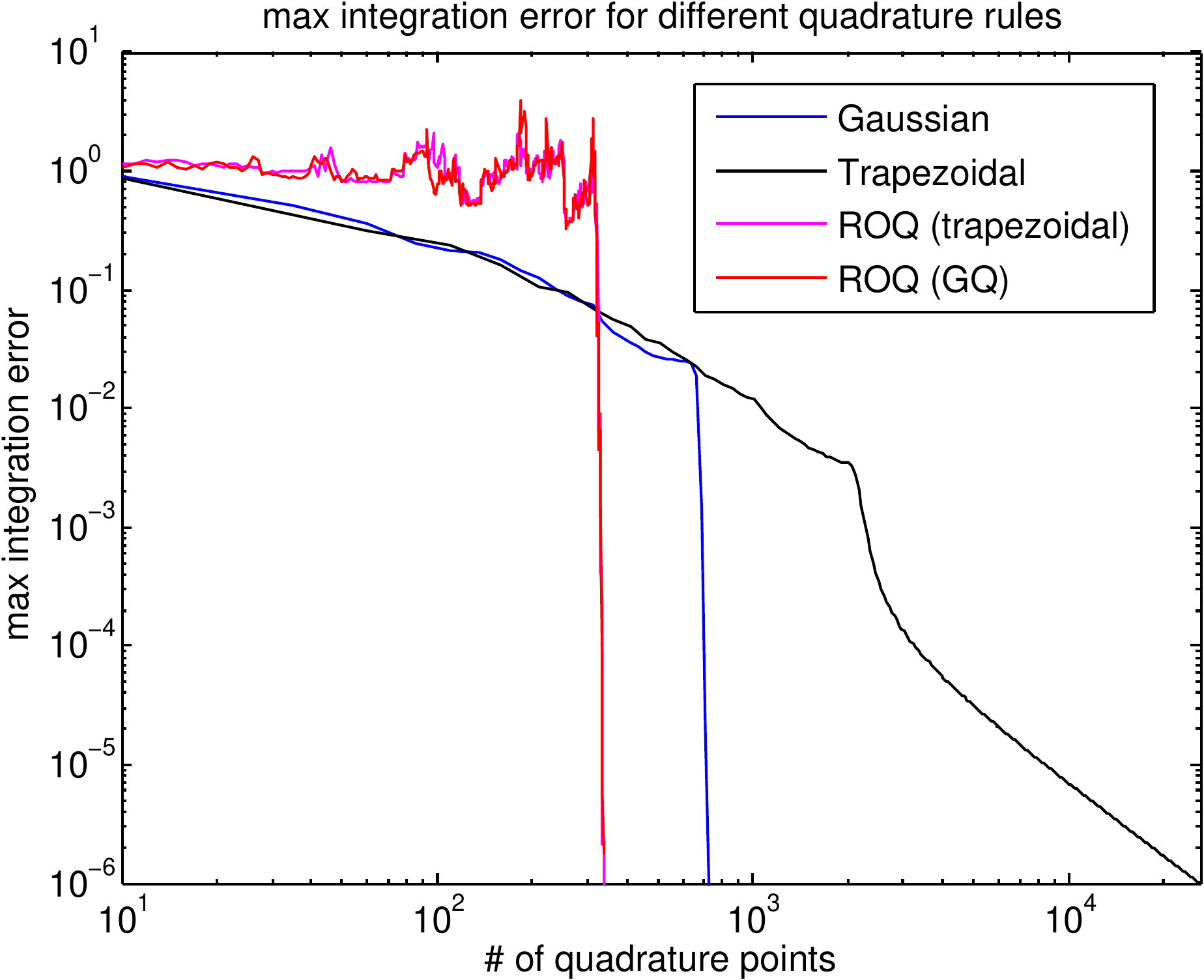} \hspace{5pt}
\includegraphics[width=0.49\linewidth]{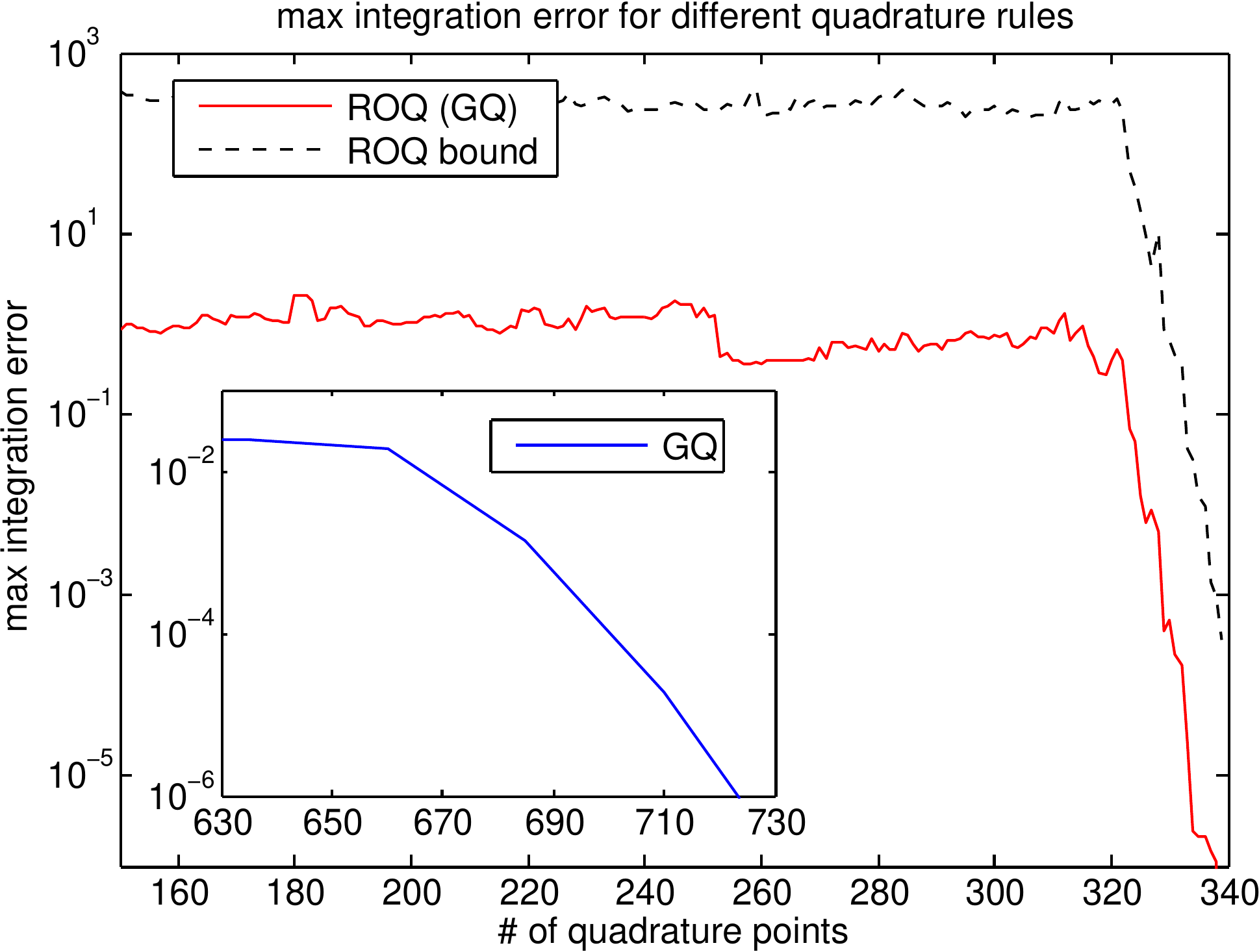}
\caption{We randomly draw $20,000$ normalized pair products $g \in \widetilde{\cF}$ and for each compute an accurate inner product (whose error is smaller than $10^{-6}$) which we take to be its exact value $I_c$. Next, for each product we compute an inner product using a i) Gauss-Legendre quadrature (blue line), ii) trapezoidal rule (black line), iii) ROQ built from Gauss-Legendre quadratures (red line), and iv) ROQ built from the trapezoidal rule (pink line). In each of the four cases we monitor the maximum errors $|I_c-I_d|$ (for the Gauss-Legendre and trapezoidal rules) and $|I_c-I_\mathrm{ROQ}|$ (for the ROQ accelerated Gauss-Legendre and trapezoidal rules). Once the underlying product is well resolved by the empirical interpolant we see very fast exponential convergence of $I_\mathrm{ROQ} \rightarrow I_c$ (compare with ``interpolation error" plotted in the right panel of Figure~\ref{fig:EIMInt_Prod}). Notice that both ROQ rules perform similarly, which is to be expected whenever the underlying integration scheme is able to accurately integrate the reduced basis (cf. the discussion following Algorithm~\ref{algo:ROQ} and Fig.~\ref{fig:RungeInt}). After an error of about $10^{-1}$ both ROQ rules outperform their discrete counterparts and in some cases significantly so. The right figure is a semi-log ``zoom-in" plot to clearly show the exponential convergence along with error bounds given by Eq.~\ref{eq:roq-error}. Results for ROQ (trapezoidal) are qualitatively similar and omitted for clarity on the right figure.}
\label{fig:RBO_Quad}
\end{figure}

\section{Final remarks}   \label{sec:conclusion}

Numerical integration is a well studied topic (see Refs.~\cite{Press92,Quarteroni2010} for excellent introductions). One may wish to identify when Reduced Order Quadratures (ROQ) are likely to be a competitive option over alternative, more standard integration methods. First and foremost, the set of functions to be integrated should be well approximated by a relatively compact basis. Although we have here focused on bases obtained through the Reduced Basis-greedy approach, ROQ apply unchanged to any other basis sets; for example, those obtained through a Proper Orthogonal/Singular Value decomposition. Second, since the cost of building the reduced basis either by a RB-greedy or POD/SVD is potentially large, it should be done {\em offline}. For larger problems, however, a direct reduced order modeling approach might be unfeasible, even when building the basis is an offline and parallelizable calculation. To overcome this we proposed a two-step greedy targeted towards approximation of products of functions (such as those appearing in weighted inner products) which, for the problem considered here, accelerated offline ROQ computations by two orders of magnitude. Finally, and crucially, one should determine which {\em online} costs to reduce. Benefits of ROQ are greatest when multiple fast evaluations of weighted inner products (or perhaps simply weighted integrals) are required between parametrized functions. Are the functional evaluations themselves costly? Quadrature rules which are designed for high-order integration of polynomials might lead to significantly more functional evaluations compared to an ROQ rule. Can one sample the function at arbitrary points or is the data sampling specified? In our numerical experiments with gravitational waves the savings were moderate (a factor of $\sim$ 2) when we were able to dictate the sampling location whereas they were much greater (a factor of $\sim 50$) when the function was sampled at equally spaced points.
This suggests the possibility of using ROQ in data analysis applications such as 
matched filtering and Bayesian parameter estimation.

There are many potential applications of ROQ which deserve further consideration. As pointed out in Ref.~\cite{Maday_2009}, empirical interpolation is remarkably versatile in its handling of multi-dimensional problems on irregular domains, and a natural application of ROQ would be to such problems. Discontinuous functions (in physical space) with smooth variation with respect to parameters might admit a reduced basis approximating to the full space with fast convergence, and if so it might be possible to generate fast converging ROQ for families of discontinuous or noisy functions. Furthermore, as pointed out in Sec.~\ref{sec:ROQ}, the hierarchical construction of ROQ node locations allows for very natural application-specific nested quadrature rules of arbitrary order and depth.

\section*{Acknowledgments}
This work has been supported in part by NSF Grants DMS-0807811, PHY0801213, PHY1005632, and DMS-1109325 to the University of Maryland, and PHY1125915 to the University of California at Santa Barbara. We thank Priscilla Canizares 
for comments on the manuscript and Chad Galley for useful discussions. MT and SEF thank the Kavli Institute for Theoretical Physics, University of California at Santa Barbara, where this work was completed, for its hospitality. 
H.\,A. and M.\,T. thank Tryst DC, where parts of this work were done, for its hospitality.

\appendix

\section{Greedy and EIM algorithms} \label{app:algs}

\subsection{Greedy algorithm}

For any $\epsilon > 0$ and training set $\cT_K$ the greedy algorithm to build a reduced basis is as follows:
\begin{algorithm} {(\bf RB Greedy Algorithm)}  \label{algo:RB-greedy}  $\quad$\\
$\left[\{e_\ell\}_{\ell=1}^n, \{\mu_\ell\}_{\ell=1}^n\right]$ = RB-Greedy ($\epsilon$, $\cT_K$)
\begin{itemize}
\setlength{\itemsep}{2pt}
\item[(1)] Set $n=0$ and define $\greedy{0}{M}(\cF_{\mathrm{train}};\cH) := 1$
\item[(2)] Choose an arbitrary $\mu_1 \in \cT_K$ and set $e_1 := h_{\mu_1}$ $\quad$ {\bf Comment:} $\|h_{\mu_1}\|_{\tt d} = 1$
\item[(3)] do, while $\greedy{n}{M}(\cF_{\mathrm{train}};\cH) \ge \epsilon$
\begin{itemize}
\setlength{\itemsep}{2pt}
\item[(a)] $n = n + 1$
\item[(b)] $\greedy{n}{M}(h_{\mu}) := \left\|  h_{\mu} - \proF{n}{} h_\mu \right\|_{\tt d}$ for all $\mu \in \cT_K$
\item[(c)] $\greedy{n}{M}(\cF_{\mathrm{train}};\cH) =  \sup_{ \mu \in \cT_K} \left\{ \greedy{n}{M}(h_{\mu}) \right\}$
\item[(d)] $ \mu_{n+1} :=  \argsup_{\mu \in \cT_K}  \left\{ \greedy{n}{M} (h_\mu) \right\} \quad \mbox{(greedy sweep)} $
\item[(e)] $e_{n+1} := h_{\mu_{n+1}} - \proF{n}{} h_{\mu_{n+1}} \quad \mbox{(Gram-Schmidt Orthogonalization)} $
\item[(f)] $e_{n+1} := e_{n+1}/ \| e_{n+1} \|_{\tt d}  \quad \mbox {(normalization)}$
\end{itemize}
\end{itemize}
\end{algorithm}
\vspace{10pt}

\begin{remark}
In step $3$b the error is computed exactly as $\left\|  h_{\mu} - \proF{n}{} h_\mu \right\|_{\tt d}$. In the setting of PDEs, RB methods avoid exact error computations by using parametric error estimators and without computing full solutions.
In the context of the empirical interpolation method Ref.~\cite{Maday_2009} suggests using $\hat{\sigma}( h_{\mu} ) = \| h_{\mu} - I_n [h_{\mu} ]\|_{\tt d}$, where $I_n [h_{\mu}]$ is the empirical interpolant. Indeed, for applications limited by computational resources using $\hat{\sigma}( h_{\mu} )$ could be desirable. Within the Hilbert space setting described in this paper, however, an exact error $\sigma( h_{\mu} )$ computation results in a reduced basis space which will more accurately approximate the full space (either $\cF$ or $\widetilde{\cF}$) and should be used whenever possible.
\end{remark}

\subsection{EIM algorithm}

\begin{algorithm} [Selection of DEIM Points]  \label{algo:DEIM} $\quad$  \\
$\left[\bP, \{ p_i \}_{i=1}^n\right]$ = DEIM ($\bV$, $\{ x_k \}_{k=1}^M$)
{\bf Comment:} the column vectors of $\bV$ must be linearly independent 
\begin{itemize}
 \item[(1)] $j = \mathrm{arg}\hspace{-1pt} \max | \be_1 | $ $\quad$ {\bf Comment:} here $\mathrm{arg}\hspace{-1pt} \max$ takes a vector and returns the {\em index} of its largest entry
 \item[(2)] Set $\bU = [\be_1]$, $\bP = [\hat{\be}_j]$, $p_1 = x_j$ $\quad$ {\bf Comment:} $\hat{\be}_j$ is a unit column vector with a single unit entry at index $j$
 \item[(3)] for $i = 2, \dots , n$ do
 \begin{itemize}
        \item[(4)] Solve $( \bP^T \bU ) \bc = \bP^T \be_i$ for $\bc$
        \item[(5)] $\br = \be_i - \bU \bc$ 
        \item[(6)] $j = \mathrm{arg}\hspace{-1pt} \max | \br | $
        \item[(7)] Set $\bU = [\bU \quad\hspace{-2pt} \br]$, $\bP = [\bP \quad\hspace{-2pt} \hat{\be}_j]$, 
                     $p_i = x_j$ 
\end{itemize}
\end{itemize}
\end{algorithm}

The DEIM algorithm described above is nearly identical to the one given in Ref.~\cite{chaturantabut:2737} with the exception of Step 7 which is $\bU = [\bU \quad \be_i]$ in that reference. In Appendix~\ref{app:DEIM_Gauss_elimination} we show these algorithms to be equivalent and, furthermore, we show a relationship between DEIM and Gauss-elimination with partial pivoting. Owing to the lower triangular form of $\bP^T \bU$, in Appendix~\ref{app:flop-count} we show that Algorithm \ref{algo:DEIM} has a reduced computational cost. 


\section{Asymptotic FLOP Count} \label{app:flop-count}

\subsection{DEIM FLOP Count}
In Algorithm~\ref{algo:DEIM} as $\bP^T\bU$ is a lower triangular matrix therefore (after $m$ iterations) Step (4) costs $\bigo{ m^3 }$, Step (5) costs $\bigo{Mm}$ subtractions and $\bigo{M m^2}$ matrix vector multiplications, where latter is the dominant one. 
The improvement in our implementation of  Algorithm~\ref{algo:DEIM} as compared to \cite{chaturantabut:2737} is that
the our implementation scales as $\bigo{m^3}$ as compared to $\bigo{m^4}$ in \cite{chaturantabut:2737}. Therefore
the total DEIM cost is $\bigo{Mm^2+m^3}$.

\subsection{ROQ FLOP Count}
The dominant cost (assuming $M > m$) of computing reduced order quadrature weights 
$\del{\bomega^\mathrm{ROQ}}^T = \bomega^T \widetilde{\bV}\del{ \widetilde\bP^T \widetilde\bV}^{-1}$ in 
\eqref{eqn:ROQ_weights} is $\bigo{Mm^2}$. Here we used the fact that the operation 
$\widetilde{\bP}^T \widetilde{\bV}$ is equivalent to selecting $m$ rows corresponding to DEIM points 
$\{ \widetilde{p}_\ell \}_{\ell=1}^m$.

Then the overall cost to compute ROQ using one-step (Algorithm~\ref{algo:RB-greedy} [{\em Path \#1}] with 
modified {\em Gram-Schmidt} in greedy is:
\[
    \bigo{K^2 M \widehat{m}  + M \widehat{m}^2 } 
\]
and two-step (Algorithm~\ref{algo:RB-greedy} [{\em Path \#2}] is:
\[
    \bigo{n^2 M m + K M n  + M m^2 } \, .
\]
Here $m$, $\widehat{m}$ denote the number of reduced basis used in approximation of $\widetilde{\cF}$ via 
two-step and direct approach respectively and $n$ is the number of reduced basis used to approximate $\cF$.

\section{DEIM and LU Decomposition with Partial Pivoting} \label{app:DEIM_Gauss_elimination}

The original DEIM described in  \cite{chaturantabut:2737} has as Step 7 
\[
     \bU = [ \bU \quad \be_i], \quad i = 2, \dots n \, .
\] 
Since the columns of $\bU$ are linearly independent, $\bP^T\bU$ is always invertible, but
in this form the matrix $\bP^T \bU$ can be dense. 
Next we show that with a slight modification to the original DEIM, we get Algorithm~\ref{algo:DEIM} and
at every iteration $i=2, \dots, n$ with $\br_i := \br$,  
\[
     \bU := [ \bU \quad \br_i]  \, ,
\]
gives the same result. One of the 
advantages of looking at DEIM in this format is that the matrix $\bP^T \bU$ is lower triangular, therefore the system can be solved for $\bc$ with $\bigo{n^2}$
operations, as compared to using $\be_i$, which gives a dense matrix 
requiring $\bigo{n^3}$ operations.

Next we prove that we get the same result using both formats.

\begin{proposition}  \label{prop:gauss-elimination}
In \cite{chaturantabut:2737}, Step 7 of Algorithm~\ref{algo:DEIM} was presented as 
$\bU = [ \bU \quad \be_i]$, $i=2,\dots,n$, we can replace it by
\[
    \bU = [ \bU \quad \br_i] , \qquad i = 2, \dots, n ,
\]
where $\br_i := \br$, at every iteration, and $\br$ is as given in Step $5$ of Algorithm~\ref{algo:DEIM}.
\end{proposition}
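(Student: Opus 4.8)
The plan is to show that the two variants of Algorithm~\ref{algo:DEIM} produce \emph{identical} outputs: the same selected indices $j$, the same interpolation points $\{p_i\}_{i=1}^n$, the same matrix $\bP$, and hence the same interpolant in \eqref{eim_disc}. Since the two variants differ only in Step 7 (the original appends $\be_i$ to $\bU$, the modified appends $\br_i=\br$), I would argue by induction on the iteration index $i$, carrying the hypothesis that after each completed iteration the two algorithms agree on $\bP$ (equivalently, on the chosen $j$ and $p_i$) and that the column spaces of their respective $\bU$ matrices coincide, $\mathrm{span}(\bU_o)=\mathrm{span}(\bU_m)$. The base case $i=1$ is immediate: both set $\bU=[\be_1]$, select the same $j=\arg\max|\be_1|$, and hence produce the same $\bP$ and $p_1$.

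The crux is a basis-invariance lemma for the quantity computed in Steps 4--5. Writing $\bc=(\bP^T\bU)^{-1}\bP^T\be_i$, the residual is
\begin{equation*}
\br = \be_i-\bU\bc = \left(\bI - \bU(\bP^T\bU)^{-1}\bP^T\right)\be_i .
\end{equation*}
If $\bU'=\bU\bM$ for some invertible $\bM$, then the factor $\bM$ cancels, $\bU'(\bP^T\bU')^{-1}\bP^T=\bU(\bP^T\bU)^{-1}\bP^T$, so this operator -- and therefore $\br$ -- depends only on $\mathrm{span}(\bU)$ and on $\bP$, not on the particular spanning columns. Under the inductive hypothesis the spans and the matrix $\bP$ coincide, so the two variants compute the \emph{same} residual $\br$ in Step 5, whence the same $j=\arg\max|\br|$ in Step 6 and the same new column of $\bP$ and point $p_i$ in Step 7.

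To close the induction I must re-establish the span condition for the next iteration. In the modified variant the appended column satisfies $\br_i=\be_i-\bU_m\bc\in\be_i+\mathrm{span}(\bU_m)$, so $\mathrm{span}([\,\bU_m\ \ \br_i\,])=\mathrm{span}([\,\bU_m\ \ \be_i\,])$, which by the hypothesis equals $\mathrm{span}([\,\bU_o\ \ \be_i\,])$; thus the spans again agree. I expect the main obstacle -- the point needing the most care -- to be the bookkeeping that keeps $\bP^T\bU$ invertible (hence Step 4 well posed) in the modified variant. This follows because $\mathrm{span}(\bU_m)=\mathrm{span}(\bU_o)$ together with both matrices having the same number of (linearly independent) columns forces $\bU_m=\bU_o\bM$ with $\bM$ invertible, so $\bP^T\bU_m=(\bP^T\bU_o)\bM$ inherits invertibility from the original algorithm; meanwhile linear independence of $\{\be_k\}$ guarantees $\be_i\notin\mathrm{span}(\bU_m)$ and hence $\br\neq\bzero$, so $\arg\max|\br|$ is well defined throughout.
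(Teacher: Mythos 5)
Your proof is correct, and it takes a genuinely different route from the paper's. You argue by induction that the two variants are step-by-step identical in their \emph{outputs} ($\bP$, the points $p_i$, and the residuals), with the key observation being that the oblique projector $\bU(\bP^T\bU)^{-1}\bP^T$ appearing in Steps 4--5 is invariant under $\bU\mapsto\bU\bM$ for invertible $\bM$, i.e.\ it depends only on $\mathrm{span}(\bU)$ and $\bP$; since $\br_i=\be_i-\bU\bc\in\be_i+\mathrm{span}(\bU)$, the spans of the two $\bU$ matrices coincide at every iteration and the induction closes. The paper instead fixes the selected points and performs an explicit computation: it writes out the matrix $\bP^T\bU$ of the original variant (entries $\be_k(p_j)$) and shows that forward Gaussian elimination of its transpose produces exactly the lower-triangular matrix whose entries are $\br_k(p_j)$, i.e.\ the modified variant's system matrix, so the two linear systems are related by an invertible triangular transformation. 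Your approach is arguably more complete --- it makes explicit that the $\arg\max$ selections, and hence the interpolation points themselves, agree in both variants, a point the paper leaves implicit --- and it isolates the invariance principle cleanly. What the paper's computation buys in exchange is the structural payoff exploited in the surrounding appendix: it exhibits the lower-triangular form of $\bP^T\bU$ directly (the source of the $\bigo{n^2}$ versus $\bigo{n^3}$ solve cost) and makes transparent the connection between DEIM and LU factorization with partial pivoting, with the $p_i$ playing the role of pivots, which is the content of the remark that follows the proposition. Your closing care about well-posedness (invertibility of $\bP^T\bU_m$ inherited via $\bM$, and $\br\neq\bzero$ from linear independence of the $\be_k$) is sound and addresses a detail the paper does not discuss.
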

\begin{proof}
Step $4$ of Algorithm~\ref{algo:DEIM}, at $i=n$, gives the coefficient matrix  
\begin{align} \label{eq:pu}
\bP^T \bU =
   \left(  \begin{array}{cccc}   
              \be_1(p_1)  & \be_2(p_1)  & \cdots & \be_{n-1}(p_1)      \\
              \be_1(p_2)  & \be_2(p_2)  & \cdots & \be_{n-1}(p_2)      \\
              \be_1(p_3)  & \be_2(p_3)  & \cdots & \be_{n-1}(p_3)      \\              
              \vdots            & \vdots            & \ddots & \vdots                       \\
              \be_1(p_{n-1})  & \be_2(p_{n-1})  & \cdots & \be_{n-1}(p_{n-1})   \\               
             \end{array}
   \right) .
\end{align}
Next we write the row reduced echelon form using forward Gaussian elimination for $( \bP^T\bU )^T$, where 
the first step implies
\[ 
   \left(  \begin{array}{cccc}   
              \be_1(p_1)  &   0                     & \cdots & 0      \\
              \be_1(p_2)  &  \br_2(p_2)     & \cdots & \be_{n-1}(p_2) - \frac{\be_{n-1}(p_1)}{\be_1(p_1)}\be_1(p_2)      \\
              \be_1(p_3)  &  \br_2(p_3)     & \cdots & \be_{n-1}(p_3) - \frac{\be_{n-1}(p_1)}{\be_1(p_1)}\be_1(p_3)       \\              
              \vdots            & \vdots               & \ddots & \vdots                       \\
              \be_1(p_{n-1})  & \br_2(p_{n-1})    & \cdots & \be_{n-1}(p_{n-1}) - \frac{\be_{n-1}(p_1)}{\be_1(p_1)}\be_1(p_{n-1})    \\               
             \end{array}
   \right) .
\]
The final row reduced echelon form for $( \bP^T\bU )^T$ is
\begin{align} \label{eq:forward-elimination}
   \left(  \begin{array}{cccc}   
              \be_1(p_1)  &   0                     & \cdots & 0      \\
              \be_1(p_2)  &  \br_2(p_2)     & \cdots & 0     \\
              \be_1(p_3)  &  \br_2(p_3)     & \cdots & 0       \\              
              \vdots            & \vdots               & \ddots & \vdots                       \\
              \be_1(p_{n-1})  & \br_2(p_{n-1})    & \cdots & \br_{n}(p_{n-1})    \\               
             \end{array}
   \right) .
\end{align}
Hence the proposition.
\end{proof}

\begin{remark}
Algorithm~\ref{algo:DEIM} has a flavor of Gauss elimination with row pivoting. 
We showed the elimination part in Proposition~\ref{prop:gauss-elimination}. 
Now we  show that the DEIM points $\{ p_1, \dots, p_n \}$ are in fact the pivots.
Our goal is to arrive at \eref{eq:forward-elimination} row echelon form of $( \bP^T\bU )^T$ with 
with partial pivoting. Consider the matrix \eqref{eq:pu}.
Let the location of the first pivot, i.e., the maximum of the absolute value of the first column be $p_1$
(this is the same as Step 1 in Algorithm \ref{algo:DEIM}). Apply the first step of forward elimination
as in the proof of Proposition~\ref{prop:gauss-elimination}. Next compute the second pivot $p_2$, 
which is the maximum of the absolute value of the so-obtained
second column. Following in this manner we arrive at the matrix in \eref{eq:forward-elimination}.
\end{remark}



\bibliographystyle{elsarticle-num}
\bibliography{../bibtex-references/references}

\end{document}